
\documentclass[12pt,reqno]{amsart}
\usepackage{fullpage}
\usepackage[english]{babel} 
\usepackage[utf8]{inputenc}  
\usepackage{amsmath, amsfonts, amssymb}
\usepackage{hyperref, comment}
\usepackage{color}

\emergencystretch=25pt 

\theoremstyle{plain}
\newtheorem{theo}{Theorem}[section]
\newtheorem*{theo*}{Theorem}

\newtheorem{lemma}[theo]%
 {Lemma}
 {Lemma}
\newtheorem{prop}[theo]%
{Proposition}
\newtheorem{coro}[theo]%
{Corollary}

\theoremstyle{definition}
{Definition}
{Definition-Proposition}
{Conjecture} 

\theoremstyle{remark}
\newtheorem{remark}[theo]%
{Remark}
{Example}
\newtheorem{question}[theo]
{Question}

\makeatletter
\newcounter{enum@ux}
\def\brkenum#1{%
\setcounter{enum@ux}{\value{enum\romannumeral\the\@enumdepth}}%
\end{enumerate} #1%
\begin{enumerate}%
\addtocounter{enum\romannumeral\the\@enumdepth}{\value{enum@ux}}}
\makeatother

\markboth{}{}

\makeindex

\newcommand{\new}{\mathrm{new}}
\newcommand{\Vol}{\mathrm{Vol}}
\newcommand{\nor}{\mathrm{nor}}

\newcommand{\ds}{\displaystyle}
\newcommand{\Avg}{\mathop\mathrm{Avg}}
\newcommand{\Avgh}{\mathop{\mathrm{Avg}^h}\limits_{f\in B_k(N)}}
\newcommand{\Sym}{\mathop\mathrm{Sym}}

\renewcommand{\mod}{\mathop\mathrm{mod}}
\renewcommand {\Re}{\mathop{\mathrm{Re}}}
\renewcommand {\Im}{\mathop{\mathrm{Im}}}

\newcommand{\fg}{\mathfrak{f}}

\newcommand{\qg}{\mathfrak{q}}

\newcommand{\Lg}{\mathfrak{L}}
\newcommand{\lgo}{\mathfrak{l}}
\newcommand{\hg}{\mathfrak{h}}

\newcommand{\ggo}{\mathfrak{g}}

\newcommand{\bbN}{\mathbb{N}}
\newcommand{\bbR}{\mathbb{R}}
\newcommand{\bbF}{\mathbb{F}}
\newcommand{\Q}{\mathbb{Q}}
\newcommand{\Z}{\mathbb{Z}}
\newcommand{\C}{\mathbb{C}}

\newcommand{\calC}{\mathcal{C}}
\newcommand{\calD}{\mathcal{D}}
\newcommand{\calF}{\mathcal{F}}
\newcommand{\calH}{\mathcal{H}}

\newcommand{\calM}{\mathcal{M}}
\newcommand{\calP}{\mathcal{P}}

\begin{document}

\title[]{On $M$-functions associated with modular forms}

\author{Philippe Lebacque}
\address{
Philippe Lebacque
\newline \indent
Laboratoire de Math\'ematiques de Besan\c con, UFR Sciences et techniques 
\newline \indent
16, route de Gray 25 030 Besan\c con, France
}
\email{philippe.lebacque@univ-fcomte.fr}

\author{Alexey Zykin}
\address{
Alexey Zykin
\newline \indent
Laboratoire GAATI, Universit\'e de la Polyn\'esie fran\c caise
\newline \indent
BP 6570 --- 98702 Faa'a, Tahiti, Polyn\'esie fran\c caise
\newline \indent
National Research University Higher School of Economics
\newline \indent
AG Laboratory NRU HSE 
\newline \indent
Institute for Information Transmission Problems of the Russian Academy of Sciences
}
\email{alzykin@gmail.com}

\thanks{The two authors were partially supported by ANR Globes ANR-12-JS01-0007-01 and the second author by the Russian Academic Excellence Project '5-100'. }

\date{}

\begin{abstract}
Let $f$ be a primitive cusp form of weight $k$ and level $N,$ let $\chi$ be a Dirichlet character of conductor coprime with $N,$ and let $\Lg(f\otimes \chi, s)$ denote either $\log L(f\otimes \chi, s)$ or $(L'/L)(f\otimes \chi, s).$  In this article we study the distribution of the values of $\Lg$ when either $\chi$ or $f$ vary. First, for a quasi-character $\psi\colon \C \to \C^\times$ we find the limit for the average $\Avg_\chi \psi(L(f\otimes\chi, s)),$ when $f$ is fixed and $\chi$ varies through the set of characters with prime conductor that tends to infinity. Second, we prove an equidistribution result for the values of $\Lg(f\otimes \chi,s)$ by establishing analytic properties of the above limit function. Third, we study the limit of the harmonic average $\Avg^h_f \psi(L(f, s)),$ when $f$ runs through the set of primitive cusp forms of given weight $k$ and level $N\to \infty.$ Most of the results are obtained conditionally on the Generalized Riemann Hypothesis for $L(f\otimes\chi, s).$ 
\end{abstract}

\subjclass[2010]{Primary 11F11, Secondary 11M41}

\keywords{$L$-function, cuspidal newforms, value-distribution, density function}

\maketitle

\section{Introduction}

\subsection{Some history}
The study of the distribution of values of $L$-functions is a classical topic in number theory.  In the first half of 20th century Bohr, Jessen, Wintner, etc. intiated a study of the distribution of the values of the logarithm $\log \zeta(s)$ and the logarithmic derivative $(\zeta'/\zeta)(s)$ of the Riemann zeta-function, when $\Re s=\sigma > \frac{1}{2}$ is fixed and $\Im s=\tau\in\bbR$ varies (\cite{BJ}, \cite{BorchJ},\cite{JW}, \cite{KW}). This was later generalized to $L$-functions of cusp forms and Dedekind zeta-functions by Matsumoto (\cite{M1}, \cite{M2}, \cite{M3}).

In the last decade Y. Ihara in \cite{Ih1} proposed a novel view on the problem by studying other families of $L$-functions. His initial motivation was to investigate the properties of the Euler--Kronecker constant $\gamma_K$ of a global field $K,$ which was defined by him in \cite{Ih0} to be the constant term of the Laurent series expansion of the logarithmic derivative of the Dedekind zeta function of $K,$ $\zeta'_K(s)/\zeta_K(s).$ The study of $L'(1,\chi)/L(1,\chi)$ initiated in \cite{IMS} grew out to give a whole range of beautiful results on the value distribution of $L'/L$ and $\log L$. 

Given a global field $K$, i.e. a finite extension of $\Q$ or of $\bbF_q(t),$ and a family of characters $\chi$ of $K$ Ihara considered in \cite{Ih1} the distribution of $L'(s, \chi)/L(s, \chi)$ in the following cases:

\begin{itemize}
\item[(A)] $K$ is $\Q,$ a quadratic extension of $\Q$ or a function field over $\bbF_q,$ $\chi$ are Dirichlet characters on $K;$
\item[(B)] $K$ is a number field with at least two archimedean primes, and $\chi$ are normalized unramified Grössencharacters;
\item[(C)] $K=\Q$ and $\chi=\chi_{t}, t\in\bbR$ defined by $\chi_t(p)=p^{-it}.$ 
\end{itemize}

The equidistribution results of the type
\begin{equation}
\label{AverageIh}
\Avg\nolimits'_{\chi} \Phi\left(\frac{L'(s, \chi)}{L(s,\chi)}\right)=\int_\C M_\sigma (w) \Phi(w)|dw|,
\end{equation}
(with a suitably defined average in each of the above cases) were proven for $\sigma=\Re s > 1$ for number fields, and for $\sigma > 3/4$ for function fields, under significant restrictions on the test function $\Phi$. The function field case was treated once again in \cite{IM1} by Y. Ihara and K. Matsumoto, with both the assumptions on $\Phi$ and on $\sigma$ having been relaxed ($\Phi$ of at most polynomial growth and $\sigma > 1/2$ respectively). The most general results in the direction of the case (A) were established in \cite{IM3} conditionally under the Generalized Riemann Hypothesis (GRH) in the number field case and unconditionally in the function field case (the Weil's Riemann hypothesis being valid) for both families $L'(s, \chi)/L(s, \chi)$ and $\log L(s, \chi).$ For $\Re s>\frac{1}{2}$ Ihara and Matsumoto prove that
$$
\Avg\nolimits_{\chi} \Phi\left(\frac{L'(s, \chi)}{L(s,\chi)}\right)=\int_\C M_\sigma (w) \Phi(w)|dw|, \quad \Avg\nolimits_{\chi} \Phi(\log L(s, \chi))=\int_\C \calM_\sigma (w) \Phi(w)|dw|,
$$
for continuous test functions $\Phi$ of at most exponential growth. Note that $\Avg'$ in \eqref{AverageIh} is different from the one used in the latter paper, since extra averaging over conductors is assumed in the former case, the resulting statements being weaker.

Unconditional results for a more restrictive class of $\Phi$ (bounded continous functions), and with extra averaging over the conductor $\Avg',$ but still for $\Re s > \frac{1}{2}$ were established in \cite{IM2} and \cite{IM4} in the $\log$ and $\log'$ cases respectively in the situations (A, $K=\Q$) and (C).

The above results give rise to the density functions $M_\sigma(z)$ and a related function $\tilde{M}_s(z_1, z_2)$ (which is the inverse Fourier transform of $M_\sigma,$ when $z_2=\bar{z}_1$, $s=\sigma\in \bbR$) both in the $\log$ and $\log'$ cases. Under optimal circumstances (though it is very far from being known unconditionally in all cases) we have
$$
M_\sigma(z)=\Avg\nolimits_\chi \delta_z\left(\Lg(\chi, s)\right), \quad \tilde{M}_\sigma(z_1, z_2)=\Avg\nolimits_\chi \psi_{z_1, z_2}\left(\Lg(\chi, s)\right),
$$
where $\Lg(s,\chi)$ is either $L'(s, \chi)/L(s,\chi)$ or $\log L(s,\chi),$ $\delta_z$ is the Dirac delta function, and $\psi_{z_1, z_2}(w)=\exp\left(\frac{i}{2}(z_1\bar{w}+z_2w)\right)$ is a quasi-character.

The functions $M$ and $\tilde{M}$ turn out to have some remarkable properties that can be established unconditionally. For example, $\tilde{M}$ has an Euler product expansion, an analytic continuation to the left of $\Re s> 1/2,$ its zeroes and the ``Plancherel volume'' $\int_\C |\tilde{M}_\sigma(z,\bar{z})|^2|dz|$  are interesting objects to investigate. We refer to \cite{Ih2}, \cite{Ih3} for an in-depth study of $M$ and $\tilde{M},$ as well as to the survey \cite{IM0} for a thorough discussion of the above topics. 

In a recent paper by M. Mourtada and K. Murty \cite{MM} averages over quadratic characters were considered. Using the methods from \cite{IM3}, they establish an equidistribution result conditional on GRH. Note that in their case the values taken by the $L$-functions are real. In this respect the situation is similar to the one considered by us in \S \ref{section_primitive} in case we assume that $s$ is real.

Finally, let us quote a still more recent preprint by K. Matsumoto and Y. Umegaki \cite{MU} that treats similar questions for differences of logarithms of two symmetric power $L$-functions under the assumption of the GRH. Their approach is based on \cite{IM2} rather than on \cite{IM3}, though the employed techniques are remarkably close to the ones we apply in \S\ref{section_primitive}. The results of Matsumoto and Umegaki are complementary to ours, since the case of $\Sym^1 f = f,$ which is the main subject of our paper, could not be treated in \cite{MU}.

\subsection{Main results}

In this article, we generalize to the case of modular forms the methods of Ihara and Matsumoto to understand the average values of $L$-functions of Dirichlet characters over global fields. 

Our results are obtained in two different setting. First, we consider the case of a fixed modular form, while averaging with respect to its twists by Dirichlet characters. Our results in this setting are fairly complete, though sometimes conditional on GRH. Second, we consider averages with respect to primitive forms of given weight and level, when the level goes to infinity. 

Let us formulate our main results. A more thorough presentation of the corresponding notation can found in \S \ref{section_notation} and in the corresponding sections.

Let $B_k(N)$ denote the set of primitive cusp forms of weight $k$ and level $N,$ let $f\in B_k(N),$ and let $\chi$ be a Dirichlet character of conductor $m$ coprime with $N.$ Define $\Lg(f\otimes \chi, s)$ to be either $(L'/L)(f\otimes\chi,s)$ or $\log L(f\otimes \chi, s),$ put $\ggo(f\otimes \chi, s, z)=\exp\left(\frac{iz}{2}\Lg(f, s)\right).$ We introduce $\lgo_z(n)$ to be the coefficients of the Dirichlet series expansion $\ggo(f\otimes \chi, s, z)=\sum\limits_{n\geq 1} \lgo_z(n) n^{-s}.$ Using the relations between the coefficients of the Dirichlet series expansion $L(f, s)=\sum\limits_{n\geq 1} \eta_f(n) n^{-s},$ one can write $\lgo_z(n)=\sum\limits_{x\geq 1} c_{z, x}^N(n) \eta_f(x),$ where $c_{z, x}^N(n)$ depend only on the level $N.$ Put $c_{z, x}(n)=c_{z,x}^1(n).$

In what follows, the expressions of the form $f \ll g,$ $g\gg f,$ and $f = O(g)$ all denote that $|f|\leq c|g|,$ where $c$ is a positive constant. The dependence of the constant on additional parameters will be explicitly indicated (in the form $\ll_{\epsilon, \delta, \dots}$ or $O_{\epsilon, \delta, \dots}$), if it is not stipulated otherwise in the text. We denote by $v_p(n)$ the $p$-adic valuation of $n,$ writing as well $p^k \parallel n$ if $v_p(n)=k.$ We also use the notation $:=$ or $=:$ meaning that the corresponding object to the left or to the right of the equality respectively is defined in this way.

Our main results are as follows.

\begin{theo*} [Theorem \ref{averagechar}]
Assume that $m$ is a prime number and let $\Gamma_{m}$ denote the group of Dirichlet characters modulo $m.$ Let $0<\epsilon<\frac{1}{2}$ and $T,R>0.$ Let  $s=\sigma+it$ belong to the domain $\sigma\geq\epsilon+\frac{1}{2},$ $|t|\leq T,$ let $z$ and $z'$ be inside the disk $\calD_R=\{z \mid |z| \leq R\}$. Then, assuming the Generalized Riemann Hypothesis (GRH) for $L(f\otimes\chi, s)$, we have
$$
\lim_{m\to \infty}\frac{1}{|\Gamma_m|} \sum_{\chi\in\Gamma_{m}}\overline{\ggo(f\otimes\chi,s,z)}{\ggo(f\otimes\chi,s,z')}=\sum_{n\geq 1}\overline{\lgo_{z}(n)}\lgo_{z'}(n)n^{-2\sigma}=: \tilde{M}_\sigma(-\bar{z}, z').
$$
\end{theo*}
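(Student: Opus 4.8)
The plan is to expand both exponentials $\overline{\ggo(f\otimes\chi,s,z)}$ and $\ggo(f\otimes\chi,s,z')$ into their Dirichlet series, exchange summations, and evaluate the average over $\chi\in\Gamma_m$ using the orthogonality of Dirichlet characters. Concretely, write
$$
\frac{1}{|\Gamma_m|}\sum_{\chi\in\Gamma_m}\overline{\ggo(f\otimes\chi,s,z)}\,\ggo(f\otimes\chi,s,z')
=\frac{1}{|\Gamma_m|}\sum_{\chi\in\Gamma_m}\sum_{n_1,n_2\geq 1}\overline{\lgo_z(n_1)\chi(n_1)}\,\lgo_{z'}(n_2)\chi(n_2)\,n_1^{-\bar s}n_2^{-s},
$$
using that twisting by $\chi$ multiplies the $n$-th Dirichlet coefficient of $\ggo$ by $\chi(n)$ (here one must be a little careful: $\lgo_z(n)$ depends on $f$ and on the level, but not on $\chi$, while the $\chi$-dependence factors out as $\chi(n)$ exactly when $(n,m)=1$; since $m$ is prime and eventually large, the contribution of $n$ divisible by $m$ is negligible). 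By orthogonality, $\frac{1}{|\Gamma_m|}\sum_{\chi}\overline{\chi(n_1)}\chi(n_2)$ equals $1$ when $n_1\equiv n_2\pmod m$ with both coprime to $m$, and $0$ otherwise. The diagonal terms $n_1=n_2=n$ contribute exactly $\sum_{n\geq 1,\,(n,m)=1}\overline{\lgo_z(n)}\,\lgo_{z'}(n)\,n^{-2\sigma}$ (using $n^{-\bar s}n^{-s}=n^{-2\sigma}$), which converges to the claimed $\tilde M_\sigma(-\bar z,z')$ as $m\to\infty$ by dominated convergence, provided the series $\sum_n|\lgo_z(n)|\,|\lgo_{z'}(n)|\,n^{-2\sigma}$ converges absolutely and uniformly for $s$ in the stated region and $z,z'\in\calD_R$.

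Two things then need genuine work. First, one must justify the convergence of $\sum_n\overline{\lgo_z(n)}\lgo_{z'}(n)n^{-2\sigma}$ in the region $\sigma\geq\frac12+\epsilon$, uniformly in the relevant parameters. This is where GRH enters: under GRH, $\log L(f\otimes\chi,s)$ and $(L'/L)(f\otimes\chi,s)$ are controlled for $\Re s>\frac12$, and correspondingly the Dirichlet coefficients $\lgo_z(n)$ of $\exp(\frac{iz}{2}\Lg)$ should admit bounds of the shape $|\lgo_z(n)|\ll_{\epsilon,R} n^{\epsilon'}$ for any $\epsilon'>0$, or more precisely bounds that, combined with $n^{-2\sigma}$ and $2\sigma\geq 1+2\epsilon$, give an absolutely convergent sum. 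One expects these coefficient bounds to be established in an earlier section (via the relation $\lgo_z(n)=\sum_x c_{z,x}^N(n)\eta_f(x)$ together with Deligne's bound $|\eta_f(x)|\leq d(x)x^{(k-1)/2}$ and estimates on $c_{z,x}^N(n)$); I would cite those. Second — and this is the main obstacle — one must show that the off-diagonal contribution, namely the sum over pairs $n_1\neq n_2$ with $n_1\equiv n_2\pmod m$, tends to $0$ as $m\to\infty$. The point is that for such pairs one has $|n_1-n_2|\geq m$, so at least one of $n_1,n_2$ exceeds $m/2$ roughly (more carefully: for fixed $n_1<m$ the next admissible $n_2$ is $n_1+m$, etc.); bounding this tail by $\sum_{n_1}|\lgo_z(n_1)|n_1^{-\sigma}\sum_{j\geq 1}|\lgo_{z'}(n_1+jm)|(n_1+jm)^{-\sigma}$ and using the coefficient bounds shows it is $O(m^{-\delta})$ for some $\delta>0$ depending on $\epsilon$. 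Handling this requires the coefficient estimates to be strong enough (polynomial savings, not just $n^\epsilon$) in at least an averaged sense; this is the technical heart of the argument.

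Finally I would assemble the pieces: the full average equals (diagonal, coprime to $m$) $+$ (off-diagonal) $+$ (terms with $m\mid n_1n_2$), the latter two tending to $0$, and the first tending to $\sum_{n\geq1}\overline{\lgo_z(n)}\lgo_{z'}(n)n^{-2\sigma}$. One should also check that interchanging the order of the double Dirichlet summation with the finite sum over $\chi$ is legitimate — this is immediate once absolute convergence of $\sum_{n_1,n_2}|\lgo_z(n_1)\lgo_{z'}(n_2)|n_1^{-\sigma}n_2^{-\sigma}$ is known, which is exactly the square of the one-variable bound above. The uniformity in $s\in\{\sigma\geq\frac12+\epsilon,\ |t|\leq T\}$ and in $z,z'\in\calD_R$ comes for free from the uniformity of the coefficient bounds, since the limiting expression $\tilde M_\sigma(-\bar z,z')$ depends on $t$ only through the harmless bookkeeping above (in fact not at all, since $n^{-\bar s}n^{-s}=n^{-2\sigma}$), which is consistent with the statement.
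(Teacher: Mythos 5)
Your proposal rearranges $\frac{1}{|\Gamma_m|}\sum_\chi \overline{\ggo}\,\ggo'$ into a double Dirichlet sum and applies orthogonality termwise, but this step is not legitimate in the range $\frac12<\sigma\le 1$, which is the whole point of the theorem. The only available bound on the coefficients is $|\lgo_z(n)|\ll_{\epsilon',R} n^{\epsilon'}$ (Lemma \ref{MainEstimate}), and this is essentially sharp — there are no "polynomial savings." Consequently $\sum_n|\lgo_z(n)|\,n^{-\sigma}$ diverges whenever $\sigma\le 1$, so the absolute convergence of $\sum_{n_1,n_2}|\lgo_z(n_1)||\lgo_{z'}(n_2)|n_1^{-\sigma}n_2^{-\sigma}$ that you invoke at the end to justify the interchange of summations fails outright. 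The off-diagonal estimate you sketch also breaks down for the same reason: fixing $n_1$ and summing $n_2=n_1+jm$ over $j\ge 1$, the tail $\sum_{j\ge1}(n_1+jm)^{\epsilon'-\sigma}$ diverges when $\sigma-\epsilon'<1$. You correctly flag this as "the main obstacle" and "the technical heart," but the route you indicate (better individual coefficient bounds) is not available; what you have written only proves the theorem for $\sigma>1$, which the paper records separately as the unconditional Remark \ref{average_uncond}.

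The mechanism that actually reaches $\frac12<\sigma\le 1$, and the only place GRH is really used, is Ihara and Matsumoto's truncation device: one writes $\ggo=\ggo_+-\ggo_-$, where $\ggo_+$ has the exponentially damped Dirichlet series $\sum_n\lgo_z(n)\chi(n)e^{-n/X}n^{-s}$ (absolutely convergent for every $s$), while $\ggo_-$ is a contour integral in the critical strip bounded by a negative power of $X$ times $(mNk)^{o(1)}$, the latter coming from GRH via the kind of estimate in Lemma \ref{estimateL}. Orthogonality is then applied legitimately to $\Avg_{\chi}\overline{\ggo_+}\,\ggo'_+$, the cross-terms are killed by Cauchy--Schwarz, and $X$ is optimized against $m$. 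The paper does not re-derive all of this here: it instead verifies the hypotheses (A1)--(A3) of \cite[Theorem 1]{IM3} (the coefficient bound from Lemma \ref{MainEstimate}, holomorphy of $\ggo(f\otimes\chi,s,z)$ for $\Re s>\frac12$ under GRH, and the growth bound of Lemma \ref{estimateL}) and invokes that theorem as a black box; the truncation argument is what sits inside it, and it is carried out explicitly in \S\ref{section_primitive} where no such black box exists. To make your proof correct you would have to incorporate this truncation step; the bare orthogonality computation cannot be closed.
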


\begin{theo*}[Theorem \ref{maindistrib}]
Let $\Re s = \sigma>\frac{1}{2}$ and let $m$ run over prime numbers. Let $\Phi$ be either a continuous function on $\C$ with at most exponential growth, or the characteristic function of a bounded subset of $\C$ or of a complement of a bounded subset of $\C.$ Define $M_\sigma$ as the inverse Fourier transform of $\tilde{M}_\sigma(z, \bar{z}).$ Then under GRH for $L(f\otimes \chi,s)$ we have 
\begin{equation*}
\lim_{m\to\infty}\frac{1}{|\Gamma_m|} \sum_{\chi\in\Gamma_{m}}\Phi(\Lg(f\otimes\chi,s))=\int_{\C}M_{\sigma}(w)\Phi(w)|dw|.
\end{equation*}
\end{theo*}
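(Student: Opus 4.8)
The plan is to deduce the equidistribution statement from the convergence of the "moments" established in Theorem \ref{averagechar} via a Fourier/Weierstrass-type approximation argument, exactly in the spirit of Ihara--Matsumoto \cite{IM3}. First I would record that, specializing $z' = z$ and $z \mapsto -\bar z$ in the previous theorem, we get for every $z \in \C$
$$
\lim_{m\to\infty}\frac{1}{|\Gamma_m|}\sum_{\chi\in\Gamma_m}\psi_{z}\bigl(\Lg(f\otimes\chi,s)\bigr) = \tilde{M}_\sigma(z,\bar z),
$$
where $\psi_z(w) = \exp\!\left(\tfrac{i}{2}(z\bar w + \bar z w)\right) = \exp(i\,\Re(\bar z w))$ is a unitary character of $\C \cong \bbR^2$. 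In other words, the sequence of probability measures $\mu_m := \frac{1}{|\Gamma_m|}\sum_{\chi\in\Gamma_m}\delta_{\Lg(f\otimes\chi,s)}$ has Fourier transforms (characteristic functions) converging pointwise to $\widehat{\mu}(z) := \tilde{M}_\sigma(z,\bar z)$. The measure $M_\sigma(w)|dw|$ is by definition the inverse Fourier transform of $\widehat{\mu}$, so the content of the theorem is that $\mu_m \to M_\sigma(w)|dw|$ in a sense strong enough to test against the indicated (unbounded, discontinuous) functions $\Phi$.

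The key steps, in order, are as follows. (i) Verify that $\widehat{\mu}$ is continuous at $0$ with $\widehat{\mu}(0)=1$; by Lévy's continuity theorem this already gives weak convergence $\mu_m \to \mu$ for some probability measure $\mu$, and identifies $\mu = M_\sigma(w)|dw|$, in particular showing $M_\sigma \ge 0$ and $\int_\C M_\sigma = 1$, and settling the case of bounded continuous $\Phi$. (ii) Upgrade to continuous $\Phi$ of at most exponential growth: this requires a uniform tail bound, i.e. an estimate $\mu_m\bigl(\{|w| > V\}\bigr) \ll e^{-cV}$ (or similar) uniform in $m$, which lets one truncate $\Phi$ and control the error. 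The standard way to get this is from the moment bounds: since $\tilde{M}_\sigma(z,\bar z)$ is, by Theorem \ref{averagechar}, analytic and bounded on $\calD_R$ for every $R$, one extracts bounds on $\Avg_\chi |\Lg(f\otimes\chi,s)|^{2k}$ or exponential moments $\Avg_\chi e^{a|\Lg(f\otimes\chi,s)|}$ uniform in $m$, whence Markov's inequality gives the tail bound. (iii) Handle $\Phi = \mathbf{1}_B$ for $B$ bounded (or its complement): approximate $\mathbf{1}_B$ from above and below by continuous functions supported near $B$, and use that the limit measure $M_\sigma(w)|dw|$ is absolutely continuous, hence assigns no mass to $\partial B$, so the sandwiching errors vanish; combined with step (ii) for the complement case this finishes the proof.

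The main obstacle I expect is step (ii)/(iii) rather than step (i): converting pointwise convergence of characteristic functions into convergence of integrals against functions that grow exponentially or are merely indicator functions is exactly where one needs quantitative control, not just the bare limit from Theorem \ref{averagechar}. Concretely, one must show the family $\{\mu_m\}$ is "uniformly exponentially integrable" — and, for the characteristic-function case, that $M_\sigma(w)|dw|$ has no atoms on the relevant boundaries, which in turn uses that $\tilde{M}_\sigma(z,\bar z)$ decays fast enough in $z$ (its inverse Fourier transform being a genuine, indeed smooth, density). Establishing that decay, i.e. analytic properties of $\tilde M_\sigma$ as a function of $z$ on all of $\C$ (not just on a fixed disk $\calD_R$), is the technical heart; once it is in place, the approximation arguments in (ii) and (iii) are routine and follow \cite{IM3} closely. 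I would also remark that the restriction to prime $m$ enters only through Theorem \ref{averagechar} and plays no further role here.
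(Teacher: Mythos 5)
Your proposal is correct and follows essentially the same route as the paper: the paper simply packages your steps (i)--(iii) by invoking Lemma~A and the Sublemma from \cite[\S 5]{IM3}, whose content is precisely the L\'evy-continuity argument, the indicator-approximation argument, and the uniform exponential-moment bound you describe, and its inputs are exactly Corollary~\ref{corochar} for the quasi-characters, the Schwartz decay of $\tilde M_\sigma$ (Corollary~\ref{M_is_Scwartz}), the Gaussian decay and smoothness of $M_\sigma$ (Proposition~\ref{M_Growth}, Corollary~\ref{M_is_not_bad}), and the bound $\Avg_{\chi}\exp(a|\Lg(f\otimes\chi,s)|)\ll 1$. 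You also correctly identify the genuine technical heart, namely establishing the decay of $\tilde M_\sigma$ on all of $\C$, which the paper does in \S\ref{section_distribution} via the local functions $\tilde M_{\sigma,p}$ and an explicit Jessen--Wintner-type estimate.
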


\begin{theo*}[Theorem \ref{average_primitive}] 
Assume that $N$ is a prime number and that $k$ is fixed.
Let $0<\epsilon<\frac{1}{2}$ and $T,R>0.$ Let  $s=\sigma+it$ belong to the domain $\sigma\geq\epsilon+\frac{1}{2},$ $|t|\leq T,$ and $z$ and $z'$ to the disc $\calD_R$ of radius $R.$ Then, assuming GRH for $L(f, s),$ we have 
$$
\lim_{N\to +\infty} \sum_{f\in B_k(N)} \omega(f) \overline{\ggo(f,s,z)}\ggo(f,s,z')=\sum_{n,m\in\bbN}n^{-\bar{s}}m^{-s}\sum_{x\geq 1}\overline{c_{z,x}(n)}c_{z',x}(m),
$$
where $\omega(f)$ are the harmonic weights defined in \S \ref{section_primitive}.
\end{theo*}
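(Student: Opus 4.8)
\emph{Proof proposal.} The plan is to expand $\ggo(f,s,z)$ into its Dirichlet series (which, under GRH, represents $\ggo$ on $\sigma>\tfrac12$), truncate it uniformly in $f$, reduce the harmonic average to a finite linear combination of the orthogonality sums $S_N(x,y):=\sum_{f\in B_k(N)}\omega(f)\,\overline{\eta_f(x)}\,\eta_f(y)$, and then evaluate these by the Petersson trace formula for newforms of prime level, letting $N\to\infty$. Two structural facts are used throughout: first, the $f\in B_k(N)$ have trivial nebentypus, so $\eta_f(x)\in\bbR$ and $S_N(x,y)=\sum_{f\in B_k(N)}\omega(f)\,\eta_f(x)\eta_f(y)$; second, $\lgo_z(n)=\sum_{x\mid n}c^N_{z,x}(n)\,\eta_f(x)$ is a \emph{finite} sum over $x\mid n$ (the Hecke relations rewrite any monomial in $\eta_f(p),\eta_f(p^2),\dots$ as a combination of the $\eta_f(p^j)$ with $j\le v_p(n)$), and since the local factor of $c^N_{z,x}(n)$ at a prime $p\ne N$ does not involve $N$, one has $c^N_{z,x}(n)=c_{z,x}(n)$ whenever $N\nmid n$.

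Under GRH for all $L(f,s)$, $f\in B_k(N)$, the function $\Lg(f,s)$ is holomorphic on $\sigma>\tfrac12$ and equals the sum of its Dirichlet series there, with coefficient bounds $|\lgo_z(n)|\ll_{\epsilon,T,R,k}n^{\epsilon}$ and tail estimates $\sum_{n>X}|\lgo_z(n)|\,n^{-\sigma}\to0$ as $X\to\infty$, all uniform in $f$ and in $z\in\calD_R$ on the region $\sigma\ge\epsilon+\tfrac12$, $|t|\le T$ (these parallel the estimates underlying Theorem~\ref{averagechar}). Hence for every $\delta>0$ there is a fixed $X=X(\delta)$ with $\ggo(f,s,z)=\sum_{n\le X}\lgo_z(n)n^{-s}+O(\delta)$ uniformly in $f$, so that
$$
\sum_{f\in B_k(N)}\omega(f)\,\overline{\ggo(f,s,z)}\,\ggo(f,s,z')
=\sum_{n,m\le X}\frac{1}{n^{\bar s}m^{s}}\sum_{x\mid n}\sum_{y\mid m}\overline{c^N_{z,x}(n)}\,c^N_{z',y}(m)\,S_N(x,y)+O(\delta),
$$
the $O(\delta)$ being uniform in $N$ because $\sum_{f\in B_k(N)}\omega(f)=S_N(1,1)$ is bounded.

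To evaluate $S_N(x,y)$: for $N$ prime, $S_k(N)=S_k^{\new}(N)\oplus S_k(1)\oplus S_k(1)|V_N$, so $S_N(x,y)$ is extracted from the Petersson formula on a Hecke-normalized orthogonal basis of $S_k(N)$ by the usual sieving; the oldform contribution tends to $0$ as $N\to\infty$ since $\dim S_k(1)$ is fixed, and for $x,y$ coprime to $N$ one gets
$$
S_N(x,y)=\delta_{x=y}+O_{k}\Bigl(\sum_{\substack{c\ge N\\ N\mid c}}\frac{|S(x,y;c)|}{c}\,\Bigl|J_{k-1}\!\Bigl(\tfrac{4\pi\sqrt{xy}}{c}\Bigr)\Bigr|\Bigr)+\mathcal{E}_N(x,y),
$$
where $\mathcal{E}_N(x,y)\to0$ for each fixed $x,y$. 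Since $|J_{k-1}(u)|\ll_k\min(1,u^{k-1})$, $k\ge2$ is fixed, and $|S(x,y;c)|\ll_\epsilon c^{1/2+\epsilon}$, the off-diagonal sum is $\ll_{k,\epsilon}(xy)^{(k-1)/2}N^{-k+1/2+\epsilon}$, so $\lim_{N\to\infty}S_N(x,y)=\delta_{x=y}$ for each fixed $x,y$. Now fix $\delta$ and let $N\to\infty$ with $N>X$: then $c^N_{z,x}(n)=c_{z,x}(n)$ for all $n\le X$ (each $x\mid n$ being coprime to $N$), and since the sum over $n,m\le X$ is finite we pass to the limit term by term to obtain $\sum_{n,m\le X}n^{-\bar s}m^{-s}\sum_{x\ge1}\overline{c_{z,x}(n)}\,c_{z',x}(m)+O(\delta)$. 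Finally let $\delta\to0$, i.e.\ $X\to\infty$: the uniform tail bounds force the truncated double series to converge to $\sum_{n,m}n^{-\bar s}m^{-s}\sum_{x\ge1}\overline{c_{z,x}(n)}c_{z',x}(m)$, which converges absolutely for $\sigma>\tfrac12$ by the same GRH estimates (just as for $\tilde M_\sigma$ in Theorem~\ref{averagechar}); this is the asserted identity.

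The crux is the uniformity in $f$: one needs GRH-conditional control of $\log L(f,s)$ and $(L'/L)(f,s)$, hence of $\lgo_z(n)$ and $c^N_{z,x}(n)$, with constants independent of $f\in B_k(N)$, of $N$, and of $z\in\calD_R$, so that the truncation error is genuinely uniform and the exchange of $\lim_{N\to\infty}$ with the sums over $n,m,x,y$ is justified; these estimates are of the same nature as those already proved for the twisted family. Granting them, the Petersson step is routine for fixed $k$ and $N\to\infty$, the Bessel functions being evaluated only at arguments $\le4\pi X/N\to0$ and the oldspace having bounded dimension.
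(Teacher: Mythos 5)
There is a genuine gap in the truncation step, and it is precisely the difficulty that forces the paper into the Ihara--Matsumoto integral decomposition rather than a direct Dirichlet-series cutoff.

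You assert that under GRH the Dirichlet series $\sum_n \lgo_z(n)n^{-s}$ represents $\ggo(f,s,z)$ on $\sigma>\tfrac12$ \emph{with} tail estimates $\sum_{n>X}|\lgo_z(n)|\,n^{-\sigma}\to 0$ as $X\to\infty$. The second claim is false: the best available bound is $|\lgo_z(n)|\ll_{\epsilon,R} n^{\epsilon}$ (Lemma~\ref{MainEstimate}), and $\lgo_z(n)$ is not small on average --- e.g.\ at primes $\Lambda_z(p)=\tfrac{iz}{2}\eta_f(p)$ and $\lambda_z(p)=-\tfrac{iz}{2}\eta_f(p)\log p$, which are generically of size $\asymp 1$ and $\asymp\log p$ respectively --- so $\sum_n |\lgo_z(n)|n^{-\sigma}$ diverges for every $\sigma\le 1$. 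GRH gives only \emph{conditional} convergence of the Dirichlet series on $\tfrac12<\sigma\le 1$, and conditional convergence carries no usable tail bound that is uniform in $f$. Consequently the identity $\ggo(f,s,z)=\sum_{n\le X}\lgo_z(n)n^{-s}+O(\delta)$ with $X=X(\delta)$ independent of $f$ (and of $N$) is unjustified, and the ensuing term-by-term exchange of $\lim_{N\to\infty}$ with the sums over $n,m$ collapses.

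The paper's fix is structural, not cosmetic. One writes $\ggo=\ggo_+-\ggo_-$ via the contour integral $\ggo_\pm(f,s,z,X)=\frac{1}{2\pi i}\int_{\Re w=c_\pm}\Gamma(w)\ggo(f,s+w,z)X^w\,dw$ with $c_+>1-\sigma$ and $c_-=\epsilon'-\epsilon<0$. Then $\ggo_+=\sum_n\lgo_z(n)e^{-n/X}n^{-s}$ is an \emph{absolutely} convergent, smoothly weighted series (this is what makes the Petersson step and the interchange of sums legitimate), while $\ggo_-$ is bounded \emph{pointwise} in $f$ by Lemma~\ref{estimateL} (GRH enters here), giving $|\ggo_-|\ll_{\epsilon'}(NkX)^{\epsilon'}X^{-\epsilon}$. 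The harmonic average is then expanded into $\Avgh(\overline{\ggo}_+\ggo'_+)$ plus three error terms involving $\ggo_-$, which are handled by Cauchy--Schwarz, and $X$ is chosen as a power of $N$ (e.g.\ $X=N^{1/2}$) to balance the contributions. None of this is recovered by a sharp cutoff plus ``uniform tail estimates.'' Your Petersson computation and the observation that $c_{z,x}^N(n)=c_{z,x}(n)$ for $N\nmid n$ are fine and in line with the paper, but the truncation step is the crux and your argument for it does not work.
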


Finally, let us describe the structure of the paper. In \S\ref{section_notation} we introduce the notation and some technical lemmas to be used throughout the paper. The \S\ref{section_twists} is devoted to the proof of Theorem \ref{averagechar} on the mean values of the logarithms and logarithmic derivatives of $L$-functions obtained by taking averages over the twists of a given primitive modular form. Using GRH, we deduce it from Ihara and Matsumoto's results. In \S\ref{section_distribution} we study unconditionally the analytic properties of $M$ and $\tilde{M}$ functions in the above setting. We then prove an equidistribution result (Theorem \ref{maindistrib}), which is, once again, conditional on GRH. In \S\ref{section_primitive}, we consider the average over primitive forms of given weight $k$ and level $N,$ when $N\to\infty$, establishing under GRH Theorem \ref{average_primitive}. The orthogonality of characters is replaced by the Petersson formula in this case, which obviously makes the proofs trickier. Finally, open questions, remarks and further research directions are discussed in \S\ref{section_question}.

\textbf{Acknowledgements.}

We would like to thank Yasutaka Ihara for helpful discussions. The first author would like to express his gratitude to the INRIA team GRACE for an inspirational atmosphere accompanying his stay, during which a large part of this work was done.

\section{Notation}
\label{section_notation}

The goal of this section is to introduce the notation necessary to state our main results. We also prove some auxiliary estimates to be used throughout the paper. 

\subsection{The \texorpdfstring{$\ggo$}{g}-functions}

Let $N$, $k$ be two integers. We denote by $S_k(N)$ the set of cusp forms of weight $k$ and level $N,$ and by $S^{\new}_k(N)$ the set of new forms. For $f\in S_k(N)$  we write $f(z)=\sum\limits_{n=1}^{\infty} \eta_f(n)n^{(k-1)/2}e(nz)$ for its Fourier expansion at the cusp $\infty,$ with the standard notation $e(nz)=e^{2\pi inz}.$

Let $B_k(N)$ denote the set of primitive forms of weight $k$ and level $N,$ i.e. the set of $f^{\nor}=f/\eta_f(1)$ where $f$ runs through an orthogonal basis of $S_k^\new(N)$ consisting of eigenvectors of all Hecke operators $T_n$, so that the Fourier coefficients of the elements of $B_k(N)$ are the same as their Hecke eigenvalues. Note that for a primitive form $f\in B_k(N)$ all its coefficients $\eta_f(n)$ are real.

The $L$-function of a primitive form $f\in B_k(N)$ is defined as the Dirichlet series $L(f, s)=\sum\limits_{n=0}^\infty \eta_f(n)n^{-s}.$ The series converges absolutely for $\Re s> 1,$ however, $L(f,s)$ can be analytically continued to an entire function on $\C.$ It admits the Euler product expansion:
$$
L(f,s)=\prod_p L_{p}(f,s),
$$ 
where, for any prime number $p,$
$$L_p(f,s)=\begin{cases}
            \left(1-\eta_f(p)p^{-s}+p^{-2s}\right)^{-1} & \text{ if } (p,N)=1,\\
	    \left(1-\eta_f(p)p^{-s}\right)^{-1} & \text{ if } p\mid N.
           \end{cases}
$$
By the results of Deligne, these local factors can be written as follows (\cite[Chapter 6]{Iw} or \cite[Chapter IX, \S 7]{Kn}): 
\begin{equation}
\label{prodform}
L_p(f,s)=\left(1-\alpha_f(p)p^{-s}\right)^{-1}\left(1-\beta_f(p)p^{-s}\right)^{-1},
\end{equation}
where 
$$
  \begin{cases} 
 |\alpha_f(p)|=1,\ \beta_f(p)= \alpha_f(p)^{-1} & \text{ if } (p,N)=1,\\
	    \alpha_f(p)=\pm p^{-{\frac{1}{2}}}, \beta_f(p)=0& \text{ if } p\parallel N\ (\text{that is } p\mid N \text{ and } p^2\nmid N),\\
			\alpha_f(p)=\beta_f(p)=0& \text{ if } p^2 \mid N.
   \end{cases}
$$

We are interested in the two functions
$$
\begin{cases}
\ds g(f,s,z)=\exp\left(\frac{iz}{2} \frac{L'(f,s)}{L(f,s)}\right),\\ 
\ds G(f,s,z)=\exp\left(\frac{iz}{2}\log L(f,s)\right).
\end{cases}
$$
Define $h_n(x)$ and $H_n(x)$ as the coefficients of the following generating functions:
\begin{align*}
\exp\left(\frac{xt}{1-t}\right)&=\sum_{n=0}^{+\infty} h_n(x) t^n,\\
\exp(-x\log(1-t))&=\sum_{n=0}^{+\infty} H_n(x) t^n,
\end{align*}
or, equivalently (cf. \cite[\S 1.2]{IM3}), as the functions given by  $h_0(x)=H_{0}(x)=1$ and, for $n\geq 1,$ 
\begin{align*}
h_n(x)&=\sum_{r=0}^n\frac{1}{r!}\binom{n-1}{r-1}x^r,\\
H_n(x)&=\frac{1}{n!} x(x+1)\dots(x+n-1).
\end{align*}

As we have $$\frac{iz}{2} \frac{L'(f,s)}{L(f,s)}=-\frac{iz}{2}\sum_p \frac{\alpha_f(p)p^{-s}\log p }{1-\alpha_f(p)p^{-s}}+\frac{\beta_f(p)p^{-s}\log p }{1-\beta_f(p)p^{-s}},$$ we can write (using the standard convention that, in the case when $\beta_f(p)=0,$ we put $\beta_f(p)^n = 0,$ if $n>0,$ and $\beta_f(p)^0=1$):
\begin{align*}
g(f,s,z)&=\exp\left(\frac{iz}{2} \frac{L'(f,s)}{L(f,s)}\right)=\\
&=\prod\limits_{p} \exp\left(\frac{\alpha_f(p)p^{-s}}{1-\alpha_f(p)p^{-s}}\cdot \frac{-iz\log p}{2}\right)\exp\left(\frac{\beta_f(p)p^{-s}}{1-\beta_f(p)p^{-s}}\cdot \frac{-iz\log p}{2}\right)=\\
&=\prod\limits_{p} \left(\sum\limits_n h_n\left(-\frac{iz}{2}\log p\right)\alpha_f(p)^n p^{-ns}\right)\left(\sum\limits_n h_n\left(-\frac{iz}{2}\log p\right)\beta_f(p)^n p^{-ns}\right)=\\
&=\prod_{p}\left( \sum_{n=0}^{+\infty}\sum_{r=0}^n h_r\left(-\frac{iz}{2}\log p\right)h_{{n-r}}\left(-\frac{iz}{2}\log p\right)\alpha_f(p)^{r}\beta_f(p)^{n-r}p^{-ns}\right) =\\
&=\prod_{p\nmid N}\left( \sum_{n=0}^{+\infty}\sum_{r=0}^n h_r\left(-\frac{iz}{2}\log p\right)h_{{n-r}}\left(-\frac{iz}{2}\log p\right)\alpha_f(p)^{2r-n} p^{-ns}\right)
\cdot\\
&\cdot\prod_{p\parallel N}\sum_{n=0}^{+\infty}h_n\left(-\frac{iz}{2}\log p\right)\alpha_f(p)^{n}p^{-ns}=:\prod_p \sum_{n=0}^{+\infty}\lambda_z(p^n) p^{-ns}.
\end{align*}

In a similar way we get:

\begin{align*}
G(f,s,z)&=\exp\left(\frac{iz}{2} \log{L(f,s)}\right)=\\
&= \prod\limits_p\exp\left(-\frac{iz}{2}\log(1-\alpha_p(f)p^{-s})\right)\exp\left(-\frac{iz}{2}\log(1-\beta_p(f)p^{-s})\right)=\\ 
&=\prod_{p\nmid N}\left( \sum_{n=0}^{+\infty}\sum_{r=0}^nH_r\left(\frac{iz}{2}\right)H_{{n-r}}\left(\frac{iz}{2}\right)\alpha_f(p)^{2r-n}p^{-ns}\right)\prod_{p\parallel N} \sum_{n=0}^{+\infty}H_n\left(\frac{iz}{2}\right)\alpha_f(p)^{n}p^{-ns}\\
&=:\prod_p \sum_{n=0}^{+\infty}\Lambda_z(p^n) p^{-ns}.
\end{align*}

 We extend multiplicatively $\lambda_z$ and $\Lambda_z$ to $\bbN$ so that we can write:
\begin{align*}
g(f,s,z)&= \sum_{n\geq 1} \lambda_z(n) n^{-s},\\
G(f,s,z)&= \sum_{n\geq 1} \Lambda_z(n) n^{-s}.
\end{align*}

We will use the notation $\Lg$ for $\ds\frac{L'(f,s)}{L(f,s)}$ or $\log{L(f,s)},$ $\ggo$ for $g$ or $G,$ $\hg_z(p^n)$ for $h_n\left(-\frac{iz}{2}\log p\right)$ or $H_n\left(\frac{iz}{2}\right)$, and $\lgo$ for $\lambda$ or $\Lambda$ depending on the case we consider. Thus, we can write in a uniform way:

\begin{align*}
\ggo(f,s,z)&=\exp\left(\frac{iz}{2}\Lg(f,s)\right)= \sum_{n\geq 1} \lgo_z(n) n^{-s} =\prod_p \sum_{n=0}^{+\infty}\lgo_z(p^n) p^{-ns}=\\
  = &\prod_{p\nmid N}\left(\sum_{n=0}^{+\infty}\sum_{r=0}^n \hg_z(p^r)\hg_z(p^{n-r})\alpha_f(p)^{2r-n}p^{-ns}\right) \prod_{p\parallel N} \sum_{n=0}^{+\infty}\hg_z(p^n)\alpha_f(p)^{n}p^{-ns}.
\end{align*}

The coefficients $\lgo_z(n)$ will be used to define the $\tilde{M}$-functions in the case of averages over twists of modular forms by Dirichlet characters.

\subsection{The coefficients  \texorpdfstring{$\lgo_z(n)$}{l} and \texorpdfstring{$c_{z,x}(n)$}{c}}

In this subsection we will find a more explicit expression for $\lgo_z(n).$ For $p\nmid N$ we will use the formula (see \cite[(3.5)]{RW})
$$
\eta_f(p^r)=\frac{\alpha_f(p)^{r+1}-\beta_f(p)^{r+1}}{\alpha_f(p)-\beta_f(p)},
$$
which easily follows from \eqref{prodform}. Taking into account that $\beta_f(p)=\bar{\alpha}_f(p),$ we have for $r\geq 2$
\begin{align*}
\eta_f(p^r)&=\frac{\alpha_f(p)^{r+1}-\overline{\alpha_f(p)}^{r+1}}{\alpha_f(p)-\overline{\alpha_f(p)}}=\sum_{i=0}^{r}\alpha_f(p)^i\overline{\alpha_f(p)}^{r-i}=\sum_{i=0}^{r}\alpha_f(p)^{r-2i}=\\
&=\alpha_f(p)^r+\overline{\alpha_f(p)}^r+\sum_{i=1}^{r-1}\alpha_f(p)^{r-2i}=\alpha_f(p)^r+\overline{\alpha_f(p)}^r+\sum_{i=0}^{r-2}\alpha_f(p)^{r-2i-2}=\\
&=\alpha_f(p)^r+\overline{\alpha_f(p)}^r+\eta_f(p^{r-2}).
\end{align*}
The above formula also holds for $r= 1$ if we put $\eta_f(p^{-1})=0$. From this we deduce that
$$
\alpha_f(p)^r+\beta_f(p)^r=\eta_f(p^r)-\eta_f(p^{r-2}).
$$

Using the previous formula, we can write
 \begin{align*}
  \lgo_z(p^r)&= \sum_{a=0}^r \hg_z(p^a)\hg_z(p^{r-a})\alpha_f(p)^{2a-r}\\
 & =\hg_z(p^{\frac{r}{2}})^2+\sum_{a=0}^{\lfloor{\frac{r-1}{2}}\rfloor}\hg_z(p^a)\hg_z(p^{r-a})\left(\alpha_{f}(p)^{r-2a}+\alpha_{f}(p)^{2a-r}\right)\\
 &=\hg_z(p^{\frac{r}{2}})^2 + \sum_{a=0}^{\lfloor{\frac{r-1}{2}}\rfloor}\hg_z(p^a)\hg_z(p^{r-a})\left(\eta_{f}(p^{r-2a})-\eta_{f}(p^{r-2a-2})\right)\\
 &=\hg_z(p^\frac{r}{2})^2-\hg_z(p^{\frac{r}{2}-1})\hg_z(p^{\frac{r}{2}+1}) + \sum_{a=0}^{\lfloor{\frac{r-1}{2}}\rfloor}(\hg_z(p^a)\hg_z(p^{r-a})-\hg_z(p^{a-1})\hg_z(p^{r-a+1}))\eta_{f}(p^{r-2a})\\
&=\sum_{a=0}^{\lfloor{\frac{r}{2}}\rfloor}(\hg_z(p^a)\hg_z(p^{r-a})-\hg_z(p^{a-1})\hg_z(p^{r-a+1}))\eta_{f}(p^{r-2a}),
 \end{align*}
where we put $\hg_z(p^\frac{r}{2})=\hg_z(p^{\frac{r}{2}-1})=0,$ if $r$ is odd, and $\hg_z(p^a)=0,$ if $a<0.$ 

When $p\mid N$ we have 
$$\lgo_z(p^r)=\hg_z(p^r)\alpha_f(p)^r=\hg_z(p^r)\eta_f(p)^r=\hg_z(p^r)\eta_f(p^r).$$ 

Denoting by $\calP$ the set of prime numbers, for $n=\prod\limits_{p\in\calP}p^{v_{p}(n)}$ put 
$$
I_N(n)=\{m\in\bbN \mid v_{p}(m) \equiv v_{p}(n) \mod 2\ \text{ for } p\in\calP, v_p(n)=v_p(m) \text{ if } p\mid N\}
$$ 
and 
$$
J_N(n)=\{m\in I_N(n) \mid v_{p}(m)\leq v_{p}(n) \text{ for all } p\in\calP \}.
$$ 
Note the following easy estimate (\cite[Theorem 315]{HW}) in which $\tau(n)$ is the number of divisors of $n$:
\begin{equation}
\label{sizeJ}
|J_N(n)|=\prod_{p\mid n}\left(\left\lfloor\frac{v_{p}(n)}{2}\right\rfloor+1\right)\leq \tau(n)\ll_\epsilon n^\epsilon.
\end{equation}

The previous computations may be summarized as follows:
$$
\lgo_z(p^r)=\sum_{x\in J_N(p^r)} c_{z,x}^N(p^r)\eta_f(x),
$$
where 
$$
c_{z, p^a}^N(p^r)=
\begin{cases}
\hg_z(p^{\frac{r-a}{2}})\hg_z(p^{\frac{r+a}{2}})-\hg_z(p^{\frac{r-a}{2}-1})\hg_z(p^{\frac{r+a}{2}+1}),& \text{ if } p\nmid N \text{ and } r\equiv a \mod 2, \\
\hg_z(p^r),& \text{ if } p\mid N \text{ and } r= a,\\
0,& \text{ otherwise}.
\end{cases}
$$
We have $\ds \lgo_{z}(n)=\prod_{p\mid n} \lgo_z(p^{v_p(n)})$ and $\eta_f(n)\eta_f(m)=\eta_f(nm)$ if $(n, m)=1,$  thus
\begin{equation*}
\lgo_{z}(n)=\prod_{p\mid n} \left(\sum_{x\in J_N(p^{v_p(n)})}c_{z, x}^N(p^{v_p(n)})\eta_{f}(x)\right)=\sum_{x\in J_N(n)}c_{z, x}^N(n)\eta_f(x),
\end{equation*}
with 
$$
c_{z, x}^N(n)=\prod_{p\mid n} c_{z, p^{v_p(x)}}^N(p^{v_p(n)}).
$$
Note that the coefficients $c_{z, x}^N(n),$ $I_N(n),$ and $J_N(n)$ depend only on the level $N$ and not directly on the modular form $f.$ Let us also define $I(n)=I_1(n),$ $J(n)=J_1(n),$ and $c_{z,x}(n)=c_{z, x}^1(n).$ They will employed in the statement of Theorem \ref{average_primitive}, which is our main result on averages over the set of primitive forms $B_k(N).$

Let $B(a, R)=\{z\in\C\mid |z-a|<R\}$ denote the open disc of radius $R$ and center $a\in \C,$ let $\overline{B(a,R)}$ be the corresponding closed disc. We also put $\calD_R=\overline{B(0, R)}.$ The following estimate is used throughout the paper.

\begin{lemma}
\label{MainEstimate}
For any $\epsilon>0$ and $z\in \calD_R$  we have $|c_{z, x}^N (n)|\ll_{\epsilon, R} n^\epsilon$ and $|\lgo_z(n)|\ll_{\epsilon,R} n^\epsilon.$
\end{lemma}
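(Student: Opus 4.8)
The plan is to reduce both bounds to estimates on the building blocks $\hg_z(p^r)$, i.e. on $h_n(-\frac{iz}{2}\log p)$ and $H_n(\frac{iz}{2})$. First I would record explicit bounds for the polynomials $h_n$ and $H_n$ on a disc. For $H_n(x)=\frac{1}{n!}x(x+1)\cdots(x+n-1)$, on $|x|\le R'$ one has $|H_n(x)|\le \binom{n+R'}{n}$ up to a harmless rounding, hence $|H_n(x)|\le (1+R')^{?}$ — more usefully, $|H_n(x)|\le \frac{(|x|+1)(|x|+2)\cdots(|x|+n)}{n!}=\binom{n+|x|}{n}\le e^{O_{R'}(1)} n^{R'}$ for $|x|\le R'$, which is $\ll_{R'} n^{R'}$. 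Similarly $h_n(x)=\sum_{r=0}^n \frac{1}{r!}\binom{n-1}{r-1}x^r$ satisfies $|h_n(x)|\le \sum_{r}\frac{1}{r!}\binom{n-1}{r-1}|x|^r \le e^{|x|}\cdot(\text{poly in }n)$ by the generating function $\sum h_n(x)t^n=\exp(xt/(1-t))$; concretely, comparing with $\exp(|x|t/(1-t))$ and extracting the $t^n$ coefficient gives $|h_n(x)|\le h_n(|x|)\le$ something like $e^{2\sqrt{n|x|}}$ by a saddle-point / Cauchy estimate $h_n(|x|)\le \rho^{-n}\exp(|x|\rho/(1-\rho))$ optimized over $0<\rho<1$. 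In our case $x=-\frac{iz}{2}\log p$, so $|x|\le \frac{R}{2}\log p$, and the bound becomes $\ll_R \exp(c_R\sqrt{n\log p})$ — subpolynomial in $p^n$ for each fixed $n$, and this is the shape we want.

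Next I would propagate these to $c_{z,p^a}^N(p^r)$ and then to $\lgo_z(p^r)$. From the displayed formula, $c_{z,p^a}^N(p^r)$ is (when $p\nmid N$) a difference of two products $\hg_z(p^{\cdot})\hg_z(p^{\cdot})$ with the two arguments summing to $r$, and (when $p\mid N$) just $\hg_z(p^r)$. In either case, using the bounds above, $|c_{z,p^a}^N(p^r)|\ll_R \exp(c_R\sqrt{r\log p})$, uniformly in $a$. Then from $\lgo_z(p^r)=\sum_{x\in J_N(p^r)}c_{z,x}^N(p^r)\eta_f(x)$, together with $|\eta_f(x)|\le \tau(x)\le \tau(p^r)=r+1$ (Deligne's bound $|\eta_f(p)|\le 2$, so $|\eta_f(p^j)|\le j+1$) and $|J_N(p^r)|\le \lfloor r/2\rfloor+1$, we get $|\lgo_z(p^r)|\ll_R r^2\exp(c_R\sqrt{r\log p})$. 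The crucial point: for $p^r=n$ a prime power, $r\log p=\log n$, so this is $\ll_R (\log n)^2\exp(c_R\sqrt{\log n})\ll_{\epsilon,R} n^\epsilon$ for any $\epsilon>0$, since $\exp(c_R\sqrt{\log n})=n^{c_R/\sqrt{\log n}}=n^{o(1)}$.

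Finally I would multiply over primes. Since $c_{z,x}^N(n)=\prod_{p\mid n}c_{z,p^{v_p(x)}}^N(p^{v_p(n)})$ and $\lgo_z(n)=\prod_{p\mid n}\lgo_z(p^{v_p(n)})$, and since a bound of the form $g(m)\ll_{\epsilon,R} m^{\epsilon}$ for prime powers which is multiplicative extends to all $n$ by the standard device (fix $\epsilon$; the constant $C_{\epsilon,R}$ is $\ge 1$ and $C_{\epsilon,R}^{\omega(n)}\ll_{\epsilon} n^{\epsilon}$ since $\omega(n)\ll \log n/\log\log n$, then absorb one more $n^{\epsilon}$-factor from the per-prime bounds by working with $\epsilon/2$), I obtain $|c_{z,x}^N(n)|\ll_{\epsilon,R}n^{\epsilon}$ and $|\lgo_z(n)|\ll_{\epsilon,R}n^{\epsilon}$. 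The main obstacle is getting the per-prime-power bound on $h_n$ and $H_n$ in the clean form $p^{n\cdot o(1)}$ uniformly for $z\in\calD_R$: the $H_n$ side is immediate from the product formula, but the $h_n$ side genuinely needs the Cauchy/saddle-point estimate on $\exp(xt/(1-t))$ to beat the naive $n!$-type denominators and confirm subpolynomial growth; once that is in hand, the combinatorial bookkeeping ($|J_N|$, $\tau$, multiplicativity) is routine.
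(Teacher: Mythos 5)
Your proof is correct and takes essentially the same route as the paper's: both rest on the core estimate $|\hg_z(p^r)|\le\exp\bigl(O_R(\sqrt{r\log p})\bigr)$ (which the paper cites from Ihara--Matsumoto and you re-derive via the Cauchy/saddle-point bound $h_n(|x|)\le\rho^{-n}\exp(|x|\rho/(1-\rho))$), combined with $|J_N(n)|\le\tau(n)\ll_\epsilon n^\epsilon$, Deligne's bound $|\eta_f(x)|\le\tau(x)$, and $\omega(n)\ll\log n/\log\log n$ to keep the constants accumulated over primes under control. The only differences are organizational: you pass from prime powers to general $n$ by first recasting each local bound in the form $C_{\epsilon,R}(p^r)^\epsilon$ and then absorbing $C^{\omega(n)}\ll_\epsilon n^\epsilon$, whereas the paper bounds $\log|c^N_{z,x}(n)|$ in a single pass using concavity of $\sqrt{\cdot}$ applied to $\sum_{p\mid n}\sqrt{v_p(n)\log p}$; and your observation that $|H_n(iz/2)|\ll_R n^{R/2}$ follows directly from the product formula is a slightly sharper, $p$-independent substitute for the paper's chain $|H_r(iz/2)|\le H_r(|z|/2)\le h_r(|z|\log p)$.
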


\begin{proof}
To see this, recall (\cite[3.1.2]{IM3}) that for any prime $p$
$$
\left|H_r\left(\frac{iz}{2}\right)\right|\leq H_{r}\left(\frac{|z|}{2}\right)\leq h_{r}\left(\frac{|z|}{2}\right)\leq h_{r}(|z|\log p)
$$ 
and 
$$
\left|h_r\left(-\frac{iz}{2}\log p\right)\right|\leq h_r(|z|\log p)\leq \exp\left(2\sqrt{r|z|\log p}\right),
$$
thus in both cases $|\hg_z(p^r)|\leq \exp\left(2\sqrt{r|z|\log p}\right)$. Using the concavity of the function $\sqrt{x}$, we see that
\begin{align*}
|c_{z, x}^N(p^r)|&\leq e^{2\sqrt{\frac{r-a}{2}|z|\log p}}e^{2\sqrt{\frac{r+a}{2}|z|\log p}}+e^{2\sqrt{(\frac{r-a}{2}-1)|z|\log p}}e^{2\sqrt{(\frac{r+a}{2}+1)|z|\log p}}\\
&\leq e^{2\sqrt{|z|\log p}\left(\sqrt{\frac{r-a}{2}}+\sqrt{\frac{r+a}{2}}\right)}+e^{2\sqrt{|z|\log p}\left(\sqrt{\frac{r-a}{2}-1}+\sqrt{\frac{r+a}{2}+1}\right)}\\
&\leq e^{2\sqrt{|z|\log p}\sqrt{2r}}+e^{2\sqrt{|z|\log p}\sqrt{2r}}\leq 2e^{2\sqrt{2r|z|\log p}}.
\end{align*}
when $p\nmid N.$ The above estimates on $\hg_z(p^r)$ also imply the same bound on $c_{z,x}^N(p^r)$ when $p\mid N.$

Now, denoting by $\omega(n)$ the number of distinct prime divisors of $n$ and using once again the concavity of $\sqrt{x},$ for $n=\prod\limits_{p\in \calP} p^{v_p(n)}$ we have
\begin{multline*}
\log |c_{z,x}^N(n)| \leq \sum_{p\mid n} (\log 2 + \sqrt{v_p(n)\log p}\sqrt{8R}) \ll_R \left(\sum_{p\mid n} \sqrt{v_p(n)\log p}\right)\sqrt{8R} \\
\ll_R \sqrt{\sum_{p\mid n} v_p(n)\log p} \sqrt{\omega(n)}\ll_R \sqrt{\frac{\log n}{2+\log \log n}}\sqrt{\log n},
\end{multline*}
since  by \cite[Sublemma 3.10.5]{Ih1} (which is classical in the case of $\bbN$) we have
\begin{equation}
\label{boundomega}
\omega(n)\ll \frac{\log n}{2+\log \log n}.
\end{equation}
We thus conclude that $|c_{z, x}^N (n)|\ll_{\epsilon, R} n^\epsilon.$

As for the second statement, we notice that the estimate \eqref{sizeJ} together with Deligne bound $|\eta_{f}(n)|\leq \tau(n) \ll_\epsilon n^\epsilon$ imply
$
\lgo_z(n)\ll_\epsilon |J_N(n)| \cdot n^\epsilon \cdot \tau(n) \ll_\epsilon n^{3\epsilon}.
$
\end{proof}

We conclude the section by the following trivial but useful lemma.

\begin{lemma}
\label{coefficients_conjugate}
We have $\overline{\lgo_z(n)}=\lgo_{-\bar{z}}(n),$ and $\overline{c_{z, x}^N(n)}=c_{-\bar{z}, x}^N(n).$
\end{lemma}

\begin{proof}
The eigenvalues $\eta_f(n)$ are all real, so the $L$-functions $L(f, s)$ have Dirichlet series with real coefficients. Thus the statement of the lemma follows from the definition of the coefficients $\lgo_z(n),$ and $c_{z, x}^N(n).$
\end{proof}

\section{Average on twists}
\label{section_twists}
 
This section is devoted to the proof of an averaging result for twists of a given primitive form. It is to a large extent based on the work of Ihara and Matsumoto \cite{IM3}, which provides a general setting for the problem we consider. 

\subsection{Setting} 

Let us fix a primitive cusp form $f\in B_k(N)$ of weight $k$ and level $N.$ Let $\chi\colon (\Z/m\Z)^\times\to \C^\times$ be a primitive character $\mathrm{mod}\ m,$ where $(m, N)=1.$ It is known (see \cite[Prop. 14.19 and Prop. 14.20]{IK}) that $f\otimes \chi$ is a primitive form of weight $k,$ level $Nm^2,$ and nebentypus $\chi^2.$ We consider the twisted $L$-function given by
$$
L(f\otimes \chi,s)=\prod_p L_{p}(f\otimes \chi, s),
$$ 
where the local factors are defined as follows:
$$
L_p(f\otimes  \chi,s)=\left(1-\alpha_f(p)\chi(p)p^{-s}\right)^{-1}\left(1-\beta_f(p)\chi(p)p^{-s}\right)^{-1},  
$$
with the notation of \S2. It is an $L$-function of degree $2$ and conductor $Nm^2,$ entire and polynomially bounded in vertical strips. After multiplication by the gamma factor 
$$
\gamma_{k}(s)=\sqrt{\pi}2^{\frac{3-k}{2}}(2\pi)^{-s}\Gamma\left(s+\frac{k-1}{2}\right),
$$ 
it satisfies a functional equation \cite[\S 5.11]{IK}. Its analytic conductor $\qg(f\otimes\chi,s)$ is defined as follows: 
$$
\qg(f\otimes\chi,s)=Nm^2\left(\left|s+\frac{k-1}{2}\right|+3\right)\left(\left|s+\frac{k+1}{2}\right|+3\right)\leq Nm^2(|s|+k+3)^2.
$$ 


Just as in \S2 we use the following notation for modular forms with nebentypus:
$$
\begin{cases}
\ds g(f\otimes \chi,s,z)=\exp\left(\frac{iz}{2} \frac{L'(f\otimes \chi,s)}{L(f\otimes \chi,s)}\right),\\ 
\ds G(f\otimes \chi,s,z)=\exp\left(\frac{iz}{2}\log L(f\otimes \chi,s)\right).
\end{cases}
$$
We also write $\ggo(f\otimes \chi,s,z)$ to denote either of the above two functions.

If $G$ is a function on a finite group $K,$ let $\Avg_{\chi\in K}G(\chi)$ denote the usual average $|K|^{-1}\sum_{\chi\in K}G(\chi).$

\subsection{The \texorpdfstring{$\tilde{M}$}{M-tilde}-function}

We would like to understand the average over all Dirichlet characters $\mod m$ of the functions $\ggo(f\otimes \chi,s,z),$ when $m$ runs through large prime numbers. Ihara and Matsumoto's results apply in this case and we get the following theorem.

\begin{theo} 
\label{averagechar}
Assume that $m$ is a prime number. Let $\Gamma_{m}$ denote the group of Dirichlet characters modulo $m.$ Let $0<\epsilon<\frac{1}{2}$ and $T,R>0.$ Let  $s=\sigma+it$ belong to the domain $\sigma\geq\epsilon+\frac{1}{2},$ $|t|\leq T,$ let $z$ and $z'$ be inside the disk $\calD_R$. Then, assuming the Generalized Riemann Hypothesis (GRH) for $L(f\otimes\chi, s)$, in the notation of \S2 we have
\begin{equation}
\label{avchar}
\Avg_{\chi\in\Gamma_{m}}\left(\overline{\ggo(f\otimes\chi,s,z)}{\ggo(f\otimes\chi,s,z')}\right)-\sum_{(n,m)=1} \overline{\lgo_{z}(n)}\lgo_{z'}(n)n^{-2\sigma}\ll_{\epsilon,R,T, f} m^{-\frac{\epsilon}{2}}.
\end{equation}
Moreover,
$$
\lim_{m\to \infty}\Avg_{\chi\in\Gamma_{m}}\left(\overline{\ggo(f\otimes\chi,s,z)}{\ggo(f\otimes\chi,s,z')}\right)=\sum_{n\geq 1}\overline{\lgo_{z}(n)}\lgo_{z'}(n)n^{-2\sigma}.
$$
\end{theo}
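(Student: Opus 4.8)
The plan is to read off from the Euler product of \S2 a formal Dirichlet-series identity for $\ggo(f\otimes\chi,s,z)$, apply the orthogonality of the characters modulo the prime $m$ to isolate a diagonal sum, and import from \cite{IM3} --- conditionally on GRH --- the approximation of $\ggo$ by a short Dirichlet polynomial; the second assertion will then follow from \eqref{avchar} together with the absolute convergence guaranteed by Lemma \ref{MainEstimate}. I begin with the algebra. Since $\alpha_{f\otimes\chi}(p)=\alpha_f(p)\chi(p)$ and $\beta_{f\otimes\chi}(p)=\beta_f(p)\chi(p)$, each monomial $\hg_z(p^a)\hg_z(p^b)\alpha_f(p)^a\beta_f(p)^b$ in the local factor of $\ggo$ at $p$ (cf.\ the displayed computation in \S2) picks up the factor $\chi(p)^{a+b}=\chi(p)^r$, so the $n$-th Dirichlet coefficient of $\ggo(f\otimes\chi,s,z)$ is $\chi(n)\lgo_z(n)$, which vanishes when $m\mid n$. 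The Hecke eigenvalues being real, $\overline{L(f\otimes\chi,s)}=L(f\otimes\bar\chi,\bar s)$, hence $\overline{\Lg(f\otimes\chi,s)}=\Lg(f\otimes\bar\chi,\bar s)$, and, by Lemma \ref{coefficients_conjugate}, $\overline{\ggo(f\otimes\chi,s,z)}=\ggo(f\otimes\bar\chi,\bar s,-\bar z)$ has $n$-th coefficient $\bar\chi(n)\overline{\lgo_z(n)}$. Multiplying the two series and averaging over $\chi\in\Gamma_m$ would therefore give, formally,
\[
\Avg_{\chi\in\Gamma_m}\overline{\ggo(f\otimes\chi,s,z)}\,\ggo(f\otimes\chi,s,z')
=\sum_{\substack{(n_1,m)=(n_2,m)=1\\ n_1\equiv n_2\mod m}}\overline{\lgo_z(n_1)}\,\lgo_{z'}(n_2)\,n_1^{-\bar s}n_2^{-s},
\]
whose diagonal $n_1=n_2$ equals $\sum_{(n,m)=1}\overline{\lgo_z(n)}\lgo_{z'}(n)n^{-2\sigma}$.

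To make this precise for $\sigma>\tfrac12$, I would fix $X=m^{1/2}$ and set $\ggo_X(f\otimes\chi,s,z)=\sum_{n\le X,\,(n,m)=1}\lgo_z(n)\chi(n)n^{-s}$, a genuine finite Dirichlet polynomial. Because $X<m$, the relation $\Avg_{\chi\in\Gamma_m}\bar\chi(n_1)\chi(n_2)=\mathbf 1[n_1\equiv n_2\mod m]$ (valid for $(n_1n_2,m)=1$) forces $n_1=n_2$ in the average of $\overline{\ggo_X}\,\ggo_X'$, so that average equals $\sum_{n\le X,(n,m)=1}\overline{\lgo_z(n)}\lgo_{z'}(n)n^{-2\sigma}$, which by Lemma \ref{MainEstimate} and the hypothesis $\sigma\ge\tfrac12+\epsilon$ is within $O_{\epsilon,R}\bigl(\sum_{n>X}n^{\epsilon-2\sigma}\bigr)=O_{\epsilon,R}(m^{-\epsilon/2})$ of $\sum_{(n,m)=1}\overline{\lgo_z(n)}\lgo_{z'}(n)n^{-2\sigma}$; similarly $\Avg_{\chi\in\Gamma_m}|\ggo_X(f\otimes\chi,s,z)|^2=\sum_{n\le X,(n,m)=1}|\lgo_z(n)|^2n^{-2\sigma}\ll_{\epsilon,R}1$. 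Writing $\ggo=\ggo_X+E_\chi$ and expanding $\overline{\ggo}\,\ggo'$ into four terms, the three involving $E_\chi$ are bounded by Cauchy--Schwarz using these two estimates, so \eqref{avchar} follows once one shows, under GRH for $L(f\otimes\chi,s)$,
\[
\Avg_{\chi\in\Gamma_m}\bigl|\ggo(f\otimes\chi,s,z)-\ggo_X(f\otimes\chi,s,z)\bigr|^2\ll_{\epsilon,R,T,f}m^{-\epsilon}
\]
uniformly for $\sigma\ge\tfrac12+\epsilon$, $|t|\le T$, $z\in\calD_R$ (a pointwise bound on $\ggo-\ggo_X$ does not suffice, as $\ggo_X$ is only polynomially bounded in $m$). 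The limit statement then follows by letting $m\to\infty$ in \eqref{avchar}, since $\sum_{n\ge1}\overline{\lgo_z(n)}\lgo_{z'}(n)n^{-2\sigma}$ converges absolutely for $\sigma>\tfrac12$ by Lemma \ref{MainEstimate} while $\sum_{m\mid n}\overline{\lgo_z(n)}\lgo_{z'}(n)n^{-2\sigma}\ll_{\epsilon,R}m^{-1}\to0$.

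The mean-square estimate above is the heart of the matter and the step I expect to be the real obstacle; it is exactly where \cite{IM3} is used. Under GRH the nontrivial zeros of $L(f\otimes\chi,s)$ lie on $\Re s=\tfrac12$, so the explicit formula represents $\Lg(f\otimes\chi,s)$ by a length-$X$ Dirichlet polynomial with error $\ll X^{1/2-\sigma}\bigl(\log\qg(f\otimes\chi,s)\bigr)^{O(1)}$; as $L(f\otimes\chi,s)$ has conductor $Nm^2$ we have $\log\qg(f\otimes\chi,s)\ll_{f,T}\log m$, making this error $\ll_{f,T}m^{-\epsilon/2}(\log m)^{O(1)}$. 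Exponentiating, re-expanding, and re-truncating to length $X$ (the discarded tail being negligible by Lemma \ref{MainEstimate}), then combining with a mean-value theorem for character-twisted Dirichlet polynomials, yields the bound; this is the argument of \cite{IM3}, run with the coefficients $\lgo_z(n)$ in place of theirs, the only new ingredients being the polynomial growth $|\lgo_z(n)|\ll_{\epsilon,R}n^\epsilon$ of Lemma \ref{MainEstimate} and Deligne's bound for the generalized von Mangoldt coefficients of $L(f\otimes\chi,s)$. The delicate part, handled in \cite{IM3} by a smoothed truncation of the Euler product, is to keep all these approximations uniform in $\chi$, $s$ and $z$ throughout the strip while retaining a power-of-$m$ saving; granting that, the orthogonality computation, the completion of the series, and the passage to the limit are all routine given Lemma \ref{MainEstimate}.
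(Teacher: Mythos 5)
Your proposal is correct in substance but takes a noticeably different organizational route from the paper. The paper's proof is a short hypothesis-check: it observes (as you do) that $\ggo(f\otimes\chi,s,z)=\sum_{n\ge1}\lgo_z(n)\chi(n)n^{-s}$, and then cites \cite[Theorem~1]{IM3} as a black box, reducing the whole theorem to verifying that the family $\{\lgo_z(n)\}_{|z|\le R}$ is ``uniformly admissible'' in the Ihara--Matsumoto sense, i.e.\ three explicit conditions: (A1) the coefficient bound $\lgo_z(n)\ll_\epsilon n^\epsilon$ (Lemma~\ref{MainEstimate}); (A2) holomorphy of $\ggo(f\otimes\chi,s,z)$ on $\Re s>\tfrac12$ for nontrivial $\chi$ (GRH); and (A3) a logarithmic growth bound on $\log|\ggo(f\otimes\chi,s,z)|$, established in Lemma~\ref{estimateL} from the conditional estimates of \cite[Theorems~5.17, 5.19]{IK} on $\log L$ and $L'/L$ in terms of $\log\qg$. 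You instead unpack the internal architecture of \cite[Theorem~1]{IM3} --- the truncation $\ggo_X$ with $X=m^{1/2}$, orthogonality for prime $m$ forcing the diagonal, Cauchy--Schwarz on the cross terms, and the conditional mean-square bound $\Avg_\chi|\ggo-\ggo_X|^2\ll m^{-\epsilon}$ --- and then defer exactly that last estimate to \cite{IM3}'s methods. The two presentations deploy the same ingredients (Lemma~\ref{MainEstimate}, GRH-based holomorphy, and the $\ell(t)^{1-2\epsilon}\ell(mNk)^{1-2\epsilon}$ growth bound playing the role of your ``explicit formula $\Rightarrow$ short Dirichlet polynomial'' step), so mathematically you land in the same place. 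What the paper's route buys is economy and precision: by matching the cited theorem's exact hypothesis format one avoids having to re-derive the truncation/mean-square machinery and, crucially, avoids the risk of silently mis-stating what must be imported --- which is precisely the step you flag as ``the real obstacle.'' What your route buys is transparency: it makes visible why the power saving $m^{-\epsilon/2}$ arises (truncation at $X=m^{1/2}$ plus the $\sigma\ge\tfrac12+\epsilon$ decay), and why orthogonality for prime conductor gives a clean diagonal. One small point worth making explicit in your version: the paper notes that the contribution of the trivial character $\chi_0\in\Gamma_m$ is harmless because $\ggo(f\otimes\chi_0,s,z)$ is still holomorphic on $\Re s>\tfrac12$ (unlike the Dirichlet $L$-function case in \cite{IM3}, where $L(s,\chi_0)$ has a pole); your orthogonality computation for $\ggo_X$ handles $\chi_0$ automatically, but you should verify (or cite) that the mean-square estimate on $E_\chi$ that you import covers $\chi=\chi_0$ as well, or else bound that single $O(1/m)$ term separately.
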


\begin{proof} 

We notice that $\ggo(f\otimes\chi,s,z)=\sum\limits_{n\geq 1} \lgo_z(n)\chi(n) n^{-s}$, where $\lgo_z(n)$ are the coefficients of $\ggo(f, s, z).$ We thus can deduce the theorem from \cite[Theorem 1]{IM3}. We can pass to the situation treated in \cite{IM3} by omitting the summand corresponding to the trivial character $\chi_0$ since in our case all the $\ggo(f\otimes \chi, s, z)$ are holomorphic for $\Re s > \frac{1}{2}$. Thus, it is enough to prove that the family $\lgo_{|z|\leq R}$ is uniformly admissible in the sense of Ihara and Matsumoto.  

First of all, the property (A1), asserting that $\lgo_{|z|\leq R}(n)\ll_{\epsilon} n^\epsilon,$ follows from Lemma \ref{MainEstimate}.

The property (A2) states that $\ggo(f\otimes\chi,s,z)$ extend to holomorphic functions on $\Re s > \frac{1}{2}$ for any non trivial $\chi,$ which is true under GRH.

The property (A3) will be proven in the following lemma that will be used again in \S \ref{section_primitive}.

\begin{lemma} 
\label{estimateL}
Let $f$ be a primitive form of weight $N$, and let $\chi$ be a primitive Dirichlet character of conductor $m$ coprime with $N.$ Then, assuming GRH for $L(f\otimes\chi,s),$ we have for $\Re s  \geq \frac{1}{2}+\epsilon:$
$$
\max(0,\log|\ggo(f\otimes\chi,s,z)|)\ll_{\epsilon,R}\ell(t)^{1-2\epsilon}\ell(mNk)^{1-2\epsilon},
$$
\end{lemma}
where $\ell(x)=\log(|x|+2),$ $t=\Im s.$
\begin{proof}[Proof of the Lemma]
First, the following estimates hold (\cite[Theorems 5.17 and 5.19]{IK}) for any $s$ with $\frac{1}{2}<\Re s=\sigma\leq \frac{5}{4}:$
$$
-\frac{L'(f\otimes\chi,s)}{L(f\otimes\chi,s)}= O\left(\frac{1}{2\sigma-1}(\log \qg(f\otimes \chi, s))^{2-2\sigma}+\log\log\qg(f\otimes \chi, s)\right),
$$ 
and 
$$
\log L(f\otimes \chi,s) = O\left(\frac{(\log\qg(f\otimes \chi, s))^{2-2\sigma}}{(2\sigma-1)\log\log\qg(f\otimes \chi, s)}+\log\log\qg(f\otimes \chi, s)\right),
$$ 
the implied constants being absolute. 

Next, for the same range of $s$ we have
$$
\log \qg(f\otimes\chi, s)\ll \log (mNk) + \log (|t|+2)\ll \ell(mNk) + \ell(t).
$$ 
Thus we see that 
\begin{multline*}
\log|\ggo(f\otimes\chi,s,z)|=\log\left|\exp\left(\frac{iz}{2}\Lg(f\otimes\chi,s)\right)\right|=\Re\left(\frac{iz}{2}\Lg(f\otimes\chi,s)\right)\ll_R |\Lg(f\otimes\chi,s)|,
\end{multline*}
so
$$
\max(0,\log|\ggo(f\otimes\chi,s,z)|)\ll_{\epsilon,R}\ell(t)^{1-2\epsilon}\ell(Nmk)^{1-2\epsilon}.
$$

If $\sigma\geq \frac{5}{4}$ a much simpler estimate suffices. Indeed, using the fact that \cite[(5.25)]{IK}
$$
-\frac{L'(f,s)}{L(f, s)}=\sum_n \frac{\Lambda_f(n)}{n^s} \quad  \text{ and } \quad \log L(f, s)=-\sum_n \frac{\Lambda_f(n)}{n^s\log n},
$$
with $\Lambda_f(n)$ supported on prime powers and $\Lambda_f(p^n)=(\alpha_f(p)^n + \beta_f(p)^n)\log p,$ we see that both $\ds\frac{L'(f,s)}{L(f, s)}$ and $\log L(f\otimes \chi,s)$ are bounded by an absolute constant. Thus the conclusion of the lemma still holds in this case.
\end{proof}

Thus Ihara and Matsumoto's property (A3) is established (with a stronger bound than required), since in our case $N$ and $k$ are fixed. So, the family we consider is indeed uniformly admissible.
\end{proof}

\begin{remark}
The estimate \eqref{avchar} should still be true if we omit the condition on $m$ to be prime. To prove it one establishes an analogue of Lemma \ref{estimateL}, replacing $\chi$ with the primitive character by which it is induced and estimating the bad factors of the $L$-function (with some additional work required when $m$ is not coprime with $N$). Then one uses once again \cite[Theorem 1]{IM3}, in which the first inequality is true without any restriction on the conductor.
\end{remark}

\begin{remark}
\label{average_uncond}
The theorem should hold unconditionally for $\sigma=\Re s> 1$ by orthogonality of characters, all the series being absolutely convergent in this domain.
\end{remark}

As a direct consequence, we obtain the following result on averages of the values of $\ggo$. Put 
$$
\tilde{M}_{s}(z_{1},z_{2})=\sum\limits_{n=1}^\infty\lgo_{z_{1}}(n)\lgo_{z_{2}}(n)n^{-2s}.
$$ 
Because of Lemma \ref{MainEstimate}, the series converges uniformly and absolutely on $\Re s\geq \frac{1}{2}+\varepsilon,$ $|z_{1}|,|z_{2}|\leq R,$ defining a holomorphic function of $s,z_{1},z_{2}$ for $\Re s> \frac{1}{2}.$ Put 
$$
\psi_{z_{1},z_{2}}(w)=\exp\left(\frac{i}{2}(z_{1}\overline{w}+z_{2}w)\right).
$$ 

\begin{coro} 
\label{corochar}
Let $m$ run over prime numbers. Then, assuming GRH,
$$
\lim_{m\to\infty}\Avg_{\chi\in\Gamma_{m}}\psi_{z_{1},z_{2}}(\Lg(f\otimes\chi,s))=\tilde{M}_{\sigma}(z_{1},z_{2}).
$$
\end{coro}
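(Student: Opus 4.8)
The plan is to reduce the statement directly to Theorem~\ref{averagechar}. First I would rewrite the quantity $\psi_{z_1,z_2}(\Lg(f\otimes\chi,s))$ as a product of a $\ggo$-value and a conjugated $\ggo$-value. Writing $w=\Lg(f\otimes\chi,s)$, the definition $\psi_{z_1,z_2}(w)=\exp\left(\frac{i}{2}(z_1\overline{w}+z_2 w)\right)$ factors as $\exp\left(\frac{iz_1}{2}\overline{w}\right)\exp\left(\frac{iz_2}{2}w\right)$; the second factor is by definition $\ggo(f\otimes\chi,s,z_2)$, while $\overline{\ggo(f\otimes\chi,s,z)}=\exp\left(-\frac{i\overline{z}}{2}\overline{w}\right)$, so taking $z=-\overline{z_1}$ gives $\overline{\ggo(f\otimes\chi,s,-\overline{z_1})}=\exp\left(\frac{iz_1}{2}\overline{w}\right)$. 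Hence
$$
\psi_{z_1,z_2}(\Lg(f\otimes\chi,s))=\overline{\ggo(f\otimes\chi,s,-\overline{z_1})}\,\ggo(f\otimes\chi,s,z_2).
$$

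Next I would apply Theorem~\ref{averagechar} with parameters $z=-\overline{z_1}$ and $z'=z_2$ (both lie in $\calD_R$ whenever $z_1,z_2\in\calD_R$, since $|-\overline{z_1}|=|z_1|$), and with $\epsilon,T$ chosen so that $s=\sigma+it$ lies in the admissible range, which is possible for every $s$ with $\Re s>\frac12$. The theorem then yields
$$
\lim_{m\to\infty}\Avg_{\chi\in\Gamma_{m}}\psi_{z_1,z_2}(\Lg(f\otimes\chi,s))=\sum_{n\geq 1}\overline{\lgo_{-\overline{z_1}}(n)}\,\lgo_{z_2}(n)\,n^{-2\sigma}.
$$

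Finally I would simplify the coefficients using Lemma~\ref{coefficients_conjugate}, which gives $\overline{\lgo_{-\overline{z_1}}(n)}=\lgo_{-\overline{(-\overline{z_1})}}(n)=\lgo_{z_1}(n)$; the right-hand side above therefore equals $\sum_{n\geq 1}\lgo_{z_1}(n)\lgo_{z_2}(n)n^{-2\sigma}=\tilde{M}_\sigma(z_1,z_2)$, the last equality being the definition of $\tilde{M}$. This completes the argument. There is no genuine obstacle here: the entire analytic content --- the orthogonality input, the bound on the error term, and the absolute and uniform convergence of the series defining $\tilde{M}_\sigma$ (via Lemma~\ref{MainEstimate}) that licenses identifying the limit with $\tilde{M}_\sigma(z_1,z_2)$ --- is already packaged into Theorem~\ref{averagechar}, so the only point requiring care is tracking the complex conjugations in the reduction.
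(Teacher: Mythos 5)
Your proof is correct and follows essentially the same route as the paper: factor $\psi_{z_1,z_2}(\Lg)$ as $\overline{\ggo(\cdot,-\overline{z_1})}\,\ggo(\cdot,z_2)$, invoke Theorem~\ref{averagechar} with $z=-\overline{z_1}$, $z'=z_2$, and then simplify via Lemma~\ref{coefficients_conjugate}. The conjugation bookkeeping is handled correctly throughout.
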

\begin{proof}  By definition, we have : 
\begin{align*}
\psi_{z_{1},z_{2}}(\Lg(f\otimes\chi,s))&=\exp\left(\frac{i}{2}z_{1}\overline{\Lg(f\otimes\chi,s)}\right)\exp\left(\frac{i}{2}z_{2}\Lg(f\otimes\chi,s)\right)\\ &=\overline{\ggo(f\otimes\chi,s,-\bar{z}_{1}}){\ggo(f\otimes\chi,s,z_{2})}.
\end{align*}
By Theorem \ref{averagechar} we get 
$$
\lim_{m\to \infty}\Avg_{\chi\in\Gamma_{m}}\psi_{z_{1},z_{2}}(\Lg(f\otimes\chi,s))=\sum_{n\geq 1}\overline{\lgo_{-\bar{z}_1}(n)}\lgo_{z_{2}}(n)n^{-2\sigma}.
$$
Lemma \ref{coefficients_conjugate} implies that $\overline{\lgo_{-\bar{z}}(n)}={\lgo_z(n)},$ so the corollary is proven.
\end{proof}


\section{The distribution of \texorpdfstring{$L$}{L}-values for twists}
\label{section_distribution}

Our next result concerns the distribution of the values of logarithmic derivatives and logarithms of $L$-functions of twists of a fixed modular form $f$. In this section the dependence on $f$ in $\ll$ will be omitted.

Recall that we have defined 
$$
\tilde{M}_{s}(z_{1},z_{2})=\sum\limits_{n=1}^\infty\lgo_{z_{1}}(n)\lgo_{z_{2}}(n)n^{-2s},
$$
the corresponding series being absolutely and uniformly convergent on $\Re s \geq    \frac{1}{2}+\epsilon, |z_1|\leq R, |z_2|\leq R.$ For $\sigma \in \bbR,$ we put $\tilde{M}_\sigma(z)=\tilde{M}_\sigma(z, \bar{z}).$

Define the family of additive characters
$$
\psi_{z_{1},z_{2}}(w)=\exp\left(\frac{i}{2}(z_{1}\overline{w}+z_{2}w)\right).
$$ 
We also let $\psi_z(w)=\psi_{z, \bar{z}}(w)=\exp(i\Re (z\bar{w})).$
Recall that the Fourier transform of $\phi\colon \C \to \C,$ $\phi \in L^1$ is defined as
\begin{equation*}
\calF \phi(z)=\int_{\C} \phi(w) \psi_{z}(w)|dw|= \frac{1}{2\pi}\int_{\C} \phi(w) e^{i\Re(z\bar{w})}|dw|=\frac{1}{2\pi}\int_{\bbR^2} \phi(w) e^{i(x x' + y y')}dx dy,  
\end{equation*}
where $\ds|dw|=\frac{1}{2\pi}dxdy, x=\Re w, y=\Im w, x'=\Re z, y'=\Im z.$

The goal is to prove the following equidistribution result, which is an analogue of \cite[Theorem 4]{IM3}.


\begin{theo}
\label{maindistrib}
Let $\Re s = \sigma>\frac{1}{2}$ and let $m$ run over prime numbers. Let $\Phi$ be either a continuous function on $\C$ with at most exponential growth, that is $\Phi(w)\ll e^{a|w|}$ for some $a>0,$ or the characteristic function of a bounded subset of $\C$ or of a complement of a bounded subset of $\C.$ Define $M_\sigma$ as the inverse Fourier transform of $\tilde{M}_\sigma(z),$ $M_\sigma(z)=\calF \tilde{M}_\sigma(-z).$ Then under GRH for $L(f\otimes \chi,s)$ we have 
\begin{equation}
\label{Ldensity}
\lim_{m\to\infty}\Avg_{\chi\in\Gamma_{m}}\Phi(\Lg(f\otimes\chi,s))=\int_{\C}M_{\sigma}(w)\Phi(w)|dw|.
\end{equation}

\end{theo}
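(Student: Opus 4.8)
The plan is to deduce the equidistribution statement \eqref{Ldensity} from the convergence of the averages of quasi-characters established in Corollary \ref{corochar}, following the Fourier-analytic scheme of \cite[\S 4]{IM3}. The starting point is the observation that, for fixed $s$ with $\sigma>\frac12$, the family of random variables $\Lg(f\otimes\chi,s)$ (as $\chi$ ranges over $\Gamma_m$, with uniform probability) has characteristic function converging, as $m\to\infty$, to $\tilde M_\sigma(z,\bar z)=\tilde M_\sigma(z)$ by Corollary \ref{corochar}, since $\Avg_{\chi}\psi_z(\Lg(f\otimes\chi,s))=\Avg_{\chi}\psi_{z,\bar z}(\Lg(f\otimes\chi,s))\to\tilde M_\sigma(z)$. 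Thus the first step is a Lévy-type continuity argument: I would show that $\tilde M_\sigma(z)$ is the characteristic function of the probability measure $M_\sigma(w)|dw|$ on $\C$, i.e. that $M_\sigma\geq 0$, $\int_\C M_\sigma(w)|dw|=1$, and that the measures $\mu_m:=\Avg_{\chi\in\Gamma_m}\delta_{\Lg(f\otimes\chi,s)}$ converge weakly to $M_\sigma(w)|dw|$. For this one needs that $\tilde M_\sigma$ is continuous at $0$ (clear, since the defining series converges locally uniformly for $\sigma>\frac12$ by Lemma \ref{MainEstimate}), together with a tightness input.

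The key technical ingredient, and the main obstacle, is uniform tightness of the family $\{\mu_m\}$, equivalently a uniform-in-$m$ bound on the tails $\Avg_{\chi\in\Gamma_m}\mathbf{1}_{|\Lg(f\otimes\chi,s)|>V}$ that goes to $0$ as $V\to\infty$. This is exactly where GRH enters a second time: Lemma \ref{estimateL} gives, under GRH, the bound $\max(0,\log|\ggo(f\otimes\chi,s,z)|)\ll_{\epsilon,R}\ell(t)^{1-2\epsilon}\ell(mNk)^{1-2\epsilon}$, which is far too weak to control individual $\chi$ uniformly in $m$; so instead I would extract tightness from the second-moment (or higher even-moment) estimate coming from Theorem \ref{averagechar}. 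Concretely, applying \eqref{avchar} with $z=z'$ purely imaginary, say $z=z'=2iu$ with $u$ real, gives $\Avg_{\chi\in\Gamma_m}\exp(u\cdot 2\Re\Lg(f\otimes\chi,s)) = \Avg_{\chi\in\Gamma_m}|\ggo(f\otimes\chi,s,2iu)|^2 \to \sum_n |\lgo_{2iu}(n)|^2 n^{-2\sigma}$, and similarly with $s$ replaced appropriately to reach the imaginary part; the limit is finite and bounded on compact $u$-ranges. This yields a uniform (in $m$, for $m$ large) exponential-moment bound $\Avg_{\chi\in\Gamma_m}e^{a|\Lg(f\otimes\chi,s)|}\ll_a 1$ for $a$ in a fixed range, which immediately gives tightness (by Markov's inequality) and simultaneously handles the case of $\Phi$ with at most exponential growth via uniform integrability. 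I would take care to note that $\lgo_z(n)$ and hence $\tilde M_\sigma$ are defined and analytic in $z$ on all of $\C$ for $\sigma>\frac12$, so these exponential-moment bounds are available for every $a$, not merely small $a$.

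With tightness in hand, the rest is routine. For bounded continuous $\Phi$, weak convergence $\mu_m\Rightarrow M_\sigma(w)|dw|$ follows from the convergence of characteristic functions plus tightness (a standard compactness argument: any weak limit point $\mu$ of $\{\mu_m\}$ has characteristic function $\tilde M_\sigma$, hence equals $M_\sigma(w)|dw|$; tightness forces the whole sequence to converge). For $\Phi$ of at most exponential growth, I would approximate $\Phi$ by bounded continuous functions $\Phi_V=\Phi\cdot\mathbf{1}_{|w|\leq V}$ (smoothed), bound the error $\Avg_{\chi}|\Phi(\Lg)-\Phi_V(\Lg)|\ll \Avg_{\chi}\,e^{a|\Lg|}\mathbf{1}_{|\Lg|>V}$ uniformly in $m$ using Cauchy--Schwarz together with the exponential-moment bound with exponent $2a$, and let $V\to\infty$. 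For $\Phi$ the characteristic function of a bounded set $B$ (or its complement), one sandwiches $\mathbf{1}_B$ between continuous functions supported slightly inside and outside $B$; the only subtlety is that the limiting measure $M_\sigma(w)|dw|$ is absolutely continuous, so $\int_{\partial B}M_\sigma=0$ automatically and the portmanteau theorem applies directly, giving $\mu_m(B)\to\int_B M_\sigma(w)|dw|$. I would also record that $\Re\ggo$ and $\Im\ggo$ are real-valued so that the characteristic functions $\psi_z$ genuinely separate measures on $\C\cong\bbR^2$, and cite Lemma \ref{coefficients_conjugate} where conjugation symmetry of the coefficients is used to identify $\tilde M_\sigma(z,\bar z)$ as a genuine (real, positive-definite) characteristic function.
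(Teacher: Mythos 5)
Your proposal is correct and reaches the result along the same conceptual axis as the paper (quasi-character convergence from Corollary~\ref{corochar}, then Fourier-analytic transfer to general test functions), but it unpacks the two key black boxes that the paper simply cites. The paper's proof is essentially three sentences: it records, using Corollaries~\ref{M_is_not_bad} and~\ref{M_is_Scwartz}, that $M_\sigma$ is a ``good density function'' in the sense of Ihara--Matsumoto (nonnegative, continuous, mass $1$, with $M_\sigma$ and $\calF M_\sigma$ in $L^1\cap L^\infty$); it then invokes Lemma~A of \cite[\S 5]{IM3} to pass from characters $\psi_z$ to bounded continuous $\Phi$ and to indicators of compact sets and their complements; and for exponential growth it combines Proposition~\ref{M_Growth} with the uniform bound $\Avg_{\chi}\exp(a|\Lg(f\otimes\chi,s)|)\ll 1$, ``by the same reasoning as'' \cite[\S 5.3, Sublemma]{IM3}. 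Your write-up replaces Lemma~A by an explicit L\'evy-continuity / Prokhorov / portmanteau argument, and replaces the cited Sublemma by a direct derivation of the exponential moment bound from Theorem~\ref{averagechar} with $z=z'$ purely imaginary (controlling $\Re\Lg$, via $|\ggo(f\otimes\chi,s,2iu)|^2=e^{-2u\Re\Lg}$) and $z=z'$ real (controlling $\Im\Lg$), and then Cauchy--Schwarz. That derivation is a clean observation; the remark about ``replacing $s$'' to reach the imaginary part is a slip --- one keeps $s$ and varies $z=z'$ over real values instead --- and ``$\Re\ggo$ and $\Im\ggo$'' should read $\Re\Lg$ and $\Im\Lg$, but these are cosmetic. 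The one point you should not leave implicit is the absolute continuity of the limit measure: that $M_\sigma$ is a genuine $L^1$ density (so that $\int_{\partial B}M_\sigma|dw|=0$ and the right-hand side of \eqref{Ldensity} makes sense) rests on the decay of $\tilde M_\sigma$ from Proposition~\ref{Euler_product}(iii) and Corollary~\ref{M_is_not_bad}, which in turn use the local $M_{\sigma,P}$ theory and the Jessen--Wintner estimate; your proof still imports this, as the paper does. Net comparison: the paper's version is shorter by delegating to \cite{IM3}; yours is more self-contained and probabilistically transparent, at the cost of re-proving a known lemma.
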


\begin{remark} The above theorem should hold unconditionally for any $\sigma > 1$ and any continuous function $\Phi$ on $\C,$ by virtue of Remarks \ref{average_uncond} and (iv) of Corollary \ref{M_is_not_bad}.
\end{remark}

To prove this theorem we first construct the local $M$ and $\tilde{M}$-functions and establish their properties. We then obtain a convergence result for partial $M$-functions $M_{s,P}$ for finite sets of primes $P$ to a global function $M$. This allows us to prove some crucial estimates for the growth of $M.$ Finally, we deduce the global result using corollary \ref{corochar}. Our approach is strongly influenced by that of Ihara and Matsumoto, the main ingredients being inspired by the results of Jessen and Wintner \cite{JW} that we have to adapt to our situation. 

All the results below, except from the proof of Theorem \ref{maindistrib} itself, do not depend on GRH.

\subsection{The functions \texorpdfstring{$M_{s,P}$}{MsP} and \texorpdfstring{$\tilde{M}_{s, P}$}{MsP-tilde}.}

Let $\Re s = \sigma>0.$ Define the functions on $T_p=\C^1=\{t\in\C\ | \ |t|=1\}$ by  
\[
g_{s,p}(t)=\frac{-(\log p)\alpha(p) p^{-s}t}{1-\alpha(p)p^{-s}t}+\frac{-(\log p)\beta(p) p^{-s}t}{1-\beta(p)p^{-s}t},
\] 
and
$$
G_{s,p}(t)=-\log(1-\alpha(p)p^{-s}t)-\log(1-\beta(p)p^{-s}t).
$$
As before, we let $\ggo_{s,p}$ denote either $g_{s,p}$ or $G_{s,p},$ depending on the case we consider. We note that the local factor of the $L$-function is $1$ once $p^2\mid N,$ so we can omit such primes from our considerations. 

Denote by $\fg_{p}(z)$ the expression
$$
\frac{-(\log p)\alpha(p) z}{1-\alpha(p)z}+\frac{-(\log p)\beta(p)z}{1-\beta(p)z} \quad \text{or} \quad
-\log(1-\alpha(p)z)-\log(1-\beta(p)z).
$$
in the $\log'$ and $\log$ case respectively. Note that if $p\nmid N,$ $\fg_{p}(z)=\ds -\log p\cdot\frac{\eta_{f}(p)z-2z^2}{1-\eta_{f}(p)z+z^2}$ or $-\log(1-\eta_{f}(p)z+z^2)$ respectively. The functions $\fg_p(z)$ are holomorphic in the open disc $|z|<1.$ We obviously have $\ggo_{s,p}(t)=\fg_{p}(p^{-s}t).$

For a prime number $p$, let $T_{p}=\C^1$ be equipped with the normalized Haar measure $\ds d^\times t=\frac{dt}{2\pi i t} $. If $P$ is a finite set of primes, we let $T_{P}=\prod\limits_{p\in P}T_{p}$ and we denote by $d^\times t_{P}$ the normalized Haar measure on $T_{P}.$ Put also $\ds\ggo_{s,P}=\sum_{p\in P}\ggo_{s,p}.$

We introduce the local factors $\tilde{M}_{s, p}(z_1,z_2)$ via
\begin{equation}
\label{Msp}
\tilde{M}_{s,p}(z_{1},z_{2})=\sum_{r=0}^{+\infty}\lgo_{z_{1}}(p^r)\lgo_{z_{2}}(p^r)p^{-2rs}.
\end{equation}
The series is absolutely and uniformly convergent on compacts in $\Re s > 0$ by Lemma \ref{MainEstimate}. Put $\tilde{M}_{s, P}(z_1, z_2)=\prod\limits_{p\in P} \tilde{M}_{s, p}(z_1, z_2).$ We also define $\tilde{M}_{\sigma, p}(z)=\tilde{M}_{\sigma, p}(z, \bar{z}),$ and $\tilde{M}_{\sigma, P}(z)=\tilde{M}_{\sigma, P}(z, \bar{z}).$

\begin{lemma}
\label{Fourier}
\begin{enumerate}
\item The function $\tilde{M}_{s, P} (z_1, z_2)$ is entire in $z_1, z_2.$

\item We have 
\begin{equation*}
\tilde{M}_{s,p}(z_{1},z_{2})=\int_{\C^1}\exp\left(\frac{i}{2}(z_{1}\ggo_{s,p}(t^{-1})+z_{2}\ggo_{s,p}(t))\right)d^\times t.
\end{equation*}
In particular,
$$
\tilde{M}_{\sigma,p}(z_1,z_2)=\int_{\C^1}\psi_{z_1, z_2}(\ggo_{\sigma,p}(t))d^\times t, \quad \text{ and } \quad \tilde{M}_{\sigma,p}(z)=\int_{\C^1}\exp(i\Re(\ggo_{\sigma,p}(t)\bar{z}))d^\times t.
$$

\item The ``trivial'' bound $|\tilde{M}_{\sigma,p}(z)|\leq 1$ holds. 

\end{enumerate}
\end{lemma}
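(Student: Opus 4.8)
The plan is to deduce all three items from the local Euler factor computation of \S2, the only analytic input being the bound $|\lgo_z(p^n)|\ll_{\epsilon,R}p^{n\epsilon}$ of Lemma \ref{MainEstimate}.

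For (a), I would note that each $\lgo_z(p^r)$ is a polynomial in $z$ — it is assembled from the polynomials $\hg_z(p^a)$, $a\leq r+1$, and the fixed real numbers $\eta_f(x)$ — hence entire in $z$. Lemma \ref{MainEstimate}, applied with $n=p^r$, gives $|\lgo_{z_j}(p^r)|\ll_{\epsilon,R}p^{r\epsilon}$ uniformly for $z_j\in\calD_R$, so for $\sigma=\Re s>\epsilon$ the $r$-th term of \eqref{Msp} is $\ll_{\epsilon,R}p^{-2r(\sigma-\epsilon)}$; the series therefore converges locally uniformly in $(z_1,z_2)$, and its sum, being a locally uniform limit of entire functions, is entire. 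Then $\tilde M_{s,P}$ is entire as a finite product.

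For (b), the key identity I would establish is $\exp\bigl(\tfrac{iz}{2}\ggo_{s,p}(t)\bigr)=\sum_{n\geq 0}\lgo_z(p^n)p^{-ns}t^n$ for $t\in\C^1$, $\Re s>0$. This is exactly the local Euler factor of $\ggo(f,s,z)$ computed in \S2 with $p^{-s}$ replaced by $p^{-s}t$: indeed $\ggo_{s,p}(t)=\fg_p(p^{-s}t)$ by definition, $\fg_p$ is holomorphic on $|w|<1$, and the Taylor coefficients of $w\mapsto\exp(\tfrac{iz}{2}\fg_p(w))$ at $w=0$ must coincide with the Dirichlet coefficients $\lgo_z(p^n)$ by uniqueness (compare the two power series in $p^{-s}$). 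Then I would substitute $(z,t)\mapsto(z_1,t^{-1})$ and $(z_2,t)$, multiply the two series — which converge absolutely and uniformly on $|t|=1$ by the bound above, so term-by-term integration is legitimate — and use the orthogonality relation $\int_{\C^1}t^{k}\,d^\times t=\delta_{k,0}$: only the diagonal terms $m=n$ survive and one recovers $\sum_{n}\lgo_{z_1}(p^n)\lgo_{z_2}(p^n)p^{-2ns}=\tilde M_{s,p}(z_1,z_2)$, which is the integral formula. For the specialisations I would observe that when $\sigma\in\bbR$ one has $\ggo_{\sigma,p}(t^{-1})=\overline{\ggo_{\sigma,p}(t)}$ on $\C^1$: this follows from $\overline{t}=t^{-1}$ together with $\overline{\fg_p(w)}=\fg_p(\overline w)$, which in turn holds because $\beta_f(p)=\overline{\alpha_f(p)}$ when $p\nmid N$ (and $\beta_f(p)=0$ when $p\parallel N$), so the two summands defining $\fg_p$ are swapped by complex conjugation. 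Inserting this into the integral formula turns the exponential into $\psi_{z_1,z_2}(\ggo_{\sigma,p}(t))$; setting $z_2=\bar z_1=\bar z$ and using $\Re(z\bar w)=\Re(w\bar z)$ gives the last displayed identity.

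Finally, (c) is immediate from the integral formula in (b): the integrand $\exp(i\Re(\ggo_{\sigma,p}(t)\bar z))$ has modulus $1$ and $d^\times t$ is a probability measure on $\C^1$, so $|\tilde M_{\sigma,p}(z)|\le\int_{\C^1}1\,d^\times t=1$. The only mildly delicate points in the whole argument are the identity $\exp(\tfrac{iz}{2}\ggo_{s,p}(t))=\sum_n\lgo_z(p^n)p^{-ns}t^n$ and the justification of the term-by-term integration; both are controlled by the uniform bound of Lemma \ref{MainEstimate} and amount to bookkeeping around the Euler product of \S2, so I do not expect a genuine obstacle here.
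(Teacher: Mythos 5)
Your proof is correct and follows essentially the same route as the paper: express the local Euler factor as a power series $\exp(\frac{iz}{2}\ggo_{s,p}(t))=\sum_n\lgo_z(p^n)p^{-ns}t^n$, multiply the two series and extract the constant Fourier coefficient via $\int_{\mathbb{C}^1}t^k\,d^\times t=\delta_{k,0}$, and use $\overline{\fg_p(w)}=\fg_p(\bar w)$ (coming from $\beta_f(p)=\overline{\alpha_f(p)}$) to get the specialisations at real $\sigma$. The paper states these steps far more tersely; your version spells out the orthogonality computation, the conjugation symmetry, and the justification of term-by-term integration, all of which are correct and the right level of detail.
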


\begin{proof}
(i) This is a direct corollary of the absolute and uniform convergence of the series of analytic functions \eqref{Msp}, defining $\tilde{M}_{s, p}(z_1, z_2).$

(ii) It is clear from the definitions that $\ds\exp\left(\frac{iz}{2}\ggo_{s,p}(t)\right)=\sum_{r=0}^{\infty}\lgo_{z}(p^r)(p^{-s}t)^r.$ So, the statement is implied by the fact that $\tilde{M}_{s,p}$ is the constant term of the Fourier series expansion of $\exp\left(\frac{i}{2}(z_{1}\ggo_{s,p}(t^{-1})+z_{2}\ggo_{s,p}(t))\right).$

(iii) Obviously follows from (ii).
\end{proof}

For the sake of convenience in what follows we will identify a function on $\bbR^2$ with the Radon measure or the tempered distribution it defines, when the latter make sense.  We will also regard the Fourier transform or the convolution products as being defined via the corresponding distributions. We refer to \cite[\S 2, \S 3]{JW} for more details.

\begin{prop}  
\label{localmain}
\begin{enumerate}
\item There exists a unique 
positive measure $M_{\sigma,P}$ of compact support and mass $1$ on $\C\simeq \mathbb{R}^2$  such that 
$$
M_{\sigma,P}(\Phi)=\int_{T_{P}}\Phi(\ggo_{s,P}(t_{P}))d^\times t_{P}
$$ 
for any continuous function $\Phi$ on $\C.$  

\item  $\calF M_{\sigma,P} = \tilde{M}_{\sigma,P}(z).$ 

\item There exists a set of primes $\calP_f$ of positive density such that, for all $p\in \calP_{f},$ $\tilde{M}_{\sigma, p}(z) \ll_{p,\sigma} (1+|z|)^{-\frac{1}{2}}.$ 

\item Let $P$ be a set of primes. If $|P\cap \calP_{f}|>4$, then $M_{\sigma,P}$ admits a continuous density (still denoted by $M_{\sigma,P}$) which is an $L^1$ function. The function $M_{\sigma,P}$ satisfies $M_{\sigma,P}(z)=M_{\sigma,P}(\bar{z})\geq 0.$ 

\item $M_{\sigma,P}$ is of class $\calC^r$ once $|P\cap \calP_{f}| > 2(r+2).$
\end{enumerate}
\end{prop}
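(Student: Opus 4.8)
The plan is to read everything off the product formula $\calF M_{\sigma,P}=\tilde M_{\sigma,P}(z)=\prod_{p\in P}\tilde M_{\sigma,p}(z)$ from part (ii), combined with the only two bounds available on the local factors: the trivial bound $|\tilde M_{\sigma,p}(z)|\le 1$ of Lemma~\ref{Fourier}(iii), and the decay $\tilde M_{\sigma,p}(z)\ll_{p,\sigma}(1+|z|)^{-1/2}$ for $p\in\calP_f$ from part (iii). Writing $m=|P\cap\calP_f|$, keeping only the decaying factors and bounding the remaining ones by $1$, I would obtain the polynomial decay
$$
|\tilde M_{\sigma,P}(z)|\ll_{P,\sigma}(1+|z|)^{-m/2}.
$$
Since $m>2(r+2)\ge 4$, part (iv) already applies, so $M_{\sigma,P}$ is a genuine $L^1$ function and, moreover, $\tilde M_{\sigma,P}\in L^1(\bbR^2)$; hence Fourier inversion is legitimate and $M_{\sigma,P}(z)=\calF\tilde M_{\sigma,P}(-z)=\int_\C\tilde M_{\sigma,P}(w)e^{-i\Re(z\bar w)}|dw|$.

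Next I would record the integrability needed for $r$ derivatives. In polar coordinates on $\bbR^2$ one has $\int_\C|w|^j(1+|w|)^{-m/2}|dw|<\infty$ exactly when $j+2<m/2$; as $m>2(r+2)$ this holds for every $j\le r$, so $w\mapsto|w|^{|\alpha|}|\tilde M_{\sigma,P}(w)|$ lies in $L^1(\bbR^2)$ for every multi-index $\alpha$ with $|\alpha|\le r$. Writing $z=x'+iy'$, $w=x+iy$, so that $\Re(z\bar w)=xx'+yy'$, differentiation gives $\partial_{x'}^j\partial_{y'}^k e^{-i\Re(z\bar w)}=(-ix)^j(-iy)^k e^{-i\Re(z\bar w)}$, whose modulus is $|x|^j|y|^k\le|w|^{j+k}$. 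Thus $|w|^{j+k}|\tilde M_{\sigma,P}(w)|$ is an $L^1$-majorant independent of $z$, the standard theorem on differentiation under the integral sign shows that $\partial_{x'}^j\partial_{y'}^k M_{\sigma,P}$ exists and equals $\int_\C\tilde M_{\sigma,P}(w)(-ix)^j(-iy)^k e^{-i\Re(z\bar w)}|dw|$ for all $j+k\le r$, and dominated convergence yields continuity of this integral in $z$. A function whose partial derivatives up to order $r$ all exist and are continuous is of class $\calC^r$, which is the claim. (Note that the symmetry $M_{\sigma,P}(z)=M_{\sigma,P}(\bar z)$ from part (iv) persists, if one wants to record it.)

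I do not expect a real obstacle here: the argument is routine Fourier analysis once the polynomial decay of $\tilde M_{\sigma,P}$ is in hand, and that decay is handed to us by part (iii). The only genuine point of care is the bookkeeping of the exponent: each ``good'' prime contributes a factor $(1+|z|)^{-1/2}$, so $m$ good primes give decay of order $m/2$, and the two-dimensional integrability threshold $m/2>r+2$ is precisely the hypothesis $|P\cap\calP_f|>2(r+2)$; one should also observe that this hypothesis automatically subsumes the condition $|P\cap\calP_f|>4$ of part (iv), which is what guarantees that $M_{\sigma,P}$ is a function to begin with.
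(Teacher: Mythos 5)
Your argument is limited to part (v). You treat parts (i)--(iv) --- and in particular the decay estimate of part (iii) --- as already available, whereas the statement as given is the full proposition. For part (v) itself the argument is correct and coincides with the route the paper takes: decompose $\tilde M_{\sigma,P}=\prod_{p\in P}\tilde M_{\sigma,p}$, bound the factors for $p\notin\calP_f$ by the trivial bound $|\tilde M_{\sigma,p}(z)|\le 1$ and the factors for $p\in P\cap\calP_f$ by the $(1+|z|)^{-1/2}$ decay from (iii), so that $|\tilde M_{\sigma,P}(z)|\ll(1+|z|)^{-m/2}$ with $m=|P\cap\calP_f|$, and then invoke the standard Fourier regularity statement that $\calF g$ has continuous partial derivatives of all orders up to $r$ once $\int_\C|w|^j|g(w)|\,|dw|<\infty$ for $j\le r$. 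The paper compresses this Fourier-analytic step to a single citation of Jessen--Wintner; your version with explicit differentiation under the integral sign, the polar-coordinate integrability threshold $j+2<m/2$, and dominated convergence fleshes out the same step, and your remark that $|P\cap\calP_f|>2(r+2)$ subsumes the $|P\cap\calP_f|>4$ hypothesis of (iv) also matches the paper, which proves (iv) and (v) together by the same device.

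The genuine gap is that the nontrivial content of the proposition is exactly what your proposal omits. Part (i) needs the construction of $M_{\sigma,P}$ as the pushforward $(\ggo_{s,P})_*(d^\times t_P)$ together with the check that it depends only on $\sigma$ (via rotation invariance of the Haar measure on $T_P$). Part (ii) is the computation identifying $\calF M_{\sigma,P}$ with $\tilde M_{\sigma,P}$ via the convolution decomposition $M_{\sigma,P}=\ast_{p\in P}M_{\sigma,p}$ and the integral formula of Lemma \ref{Fourier}. Most importantly, part (iii) is the most delicate step of the entire proposition: one cannot directly apply the Jessen--Wintner decay theorem for the Fourier transform of the pushforward of Haar measure along a holomorphic map, because the admissible radius $\rho_0$ in that theorem depends on the specific map, hence on the prime $p$. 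The paper therefore proves an explicit quantitative version of that theorem, giving a radius $\rho'''=|a_1|/\bigl(\sqrt2\sum_{k\ge 2}k^3|a_k|\rho^{k-2}\bigr)$ below which the decay $(1+|z|)^{-1/2}$ holds, applies it to $F=\fg_p$ with coefficient bounds coming from Deligne's estimates, and finally appeals to Murty's oscillation theorem to produce a positive-density set of primes with $|\eta_f(p)|>1$, so that the leading coefficient $|a_1|$ is uniformly bounded below. None of this appears in your proposal, and since your proof of (v) uses the decay of part (iii) as its only nontrivial input, the argument as written is not self-contained.
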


\begin{proof}
(i) The uniqueness statement is obvious and the existence is given by the direct image measure $(\ggo_{s,P})_*(d^\times t_{P}).$ The volume of an open set $U$ of $\mathbb{R}^2$ is thus given by $M_{\sigma,P}(U)=\Vol(\ggo_{s,P}^{-1}(U)),$ therefore $M_{\sigma,P}$ has compact support equal to the image of $\ggo_{s, P}$ and mass $1.$ From the formula $\ds M_{s,P}(\Phi)=\int_{T_{P}}\Phi(\ggo_{s,P}(t_{P}))d^\times t_{P},$ it is clear that $M_{s,P}$ depends only on $\sigma,$ since Haar measures on $T_P$ are invariant under multiplication by $p^{i\Im(s)}.$ 

(ii) From the definition of the convolution product we note that, regarded as distributions with compact support, $M_{\sigma,P}=\ast_{p\in P}M_{\sigma,p}.$

Next, $\mathcal{F}M_{\sigma,P}=\mathcal{F}(\ast_{p\in P}M_{\sigma,P})=\prod_{p\in P} \mathcal{F}M_{\sigma,p}.$ From Lemma \ref{Fourier} we see that $\tilde{M}_{\sigma,P}(z_{1},z_{2})=M_{\sigma,P}(\psi_{z_{1},z_{2}}),$ and for the Fourier transforms of tempered distributions on $\C\simeq \mathbb{R}^2$ we have 
\begin{align*}
\mathcal{F}M_{\sigma,p}(\phi)&=M_{\sigma,p}\left(\int_{\C}\psi_{z}(w)\phi(w)|dw|\right)=\int_{T_{p}}\int_{\C}\psi_{g_{s,p}(t)}(w)\phi(w)|dw|d^\times t\\
&=\int_{\C}\int_{T_{p}}\psi_{g_{s,p}(t)}(w)\phi(w)d^\times t|dw|=\int_{\C}M_{\sigma,p}(\psi_{z}(w))\phi(w)|dw|\\
&=\int_{\C}M_{\sigma,p}(\psi_{w}(z))\phi(w)|dw|=\int_{\C}\tilde{M}_{\sigma,p}(w)\phi(w)|dw|.
\end{align*}
We deduce that $\calF M_{\sigma, P}=\tilde{M}_{\sigma,P}(z).$ 

(iii) This is the most delicate part. Unfortunately, we cannot apply Jessen--Wintner theorem \cite[Theorem 13]{JW} to $\fg_{p}(z),$ since $\rho_{0}$ (in the notation of the latter theorem) depends on $p.$ Therefore, we need to establish the following explicit version of their result.

\begin{lemma} Let $\rho>0$ and let $\ds F(z)=\sum_{k\geq 1} a_{k} z^k$ be absolutely convergent for $|z|< \rho+\epsilon,$ $\epsilon> 0.$ Let $S\subset \C$ denote the parametric curve $\{S(\theta)\}_{\theta\in[0,1]}=\{F(re^{2\pi i \theta})\}_{\theta\in[0,1]}.$ Let $D_r$ be the distribution on $\C=\mathbb{R}^2$ defined as the direct image of the normalized Haar measure on the circle of radius $r$ in $\C$ by $F$ and let $\tilde{D}_r=\calF D_r$ be its Fourier transform. Assume that $|a_{1}|\neq 0.$ Then, if
$$
\rho'''= \frac{|a_{1}|}{\sqrt{2}\left(\ds\sum_{k\geq 2}k^3|a_{k}|\rho^{k-2}\right)},
$$ 
for any $r<\rho_{0}=\min(\rho, \rho''')$ we have $\tilde{D}_r(z)\ll_{r, F} (1+|z|)^{-\frac{1}{2}}.$
\end{lemma}

\begin{proof} Our goal is to make the proof of \cite[Theorem 13]{JW} explicit in order to be able to estimate $\rho_{0}.$ To do so, we will verify the conditions of \cite[Theorem 12]{JW} by proceeding in several steps.

First of all, we want to ensure that $F'(z)\neq 0,$ and the curve $S$ is Jordan. Put 
$$
\ds\rho'=\frac{|a_{1}|}{\ds \sqrt{2}\sum_{k\geq 2}k|a_{k}|\rho^{k-2}}.
$$ 
If $r<\min(\rho,\rho'),$ we have $F'(z)\neq 0$ for all $z\in \calD_r=\overline{B(0,r)},$ and $F$ is injective on $\calD_r.$ Indeed, either $|\Re a_{1}|$ or $|\Im a_{1}|$ is greater than $\ds\frac{|a_{1}|}{\sqrt{2}}.$ Without loss of generality we can suppose that $\ds|\Re a_{1}|\geq \frac{|a_1|}{\sqrt{2}}.$ Then 
$$
|\Re F'(z)|\geq |\Re a_{1}|-|z|\sum_{k\geq 2}k|a_{k}|\rho^{k-2}\geq \frac{|a_{1}|}{\sqrt{2}}-|z|\sum_{k\geq 2}k|a_{k}|\rho^{k-2}>0
$$ 
on $\calD_r,$ in particular $F'(z)\neq 0.$ The sign of $\Re F'(z)$ does not change as the function is continuous, so once more, without loss of generality, we may assume that $\Re F'(z)>0.$ Then, for $z_{1}\neq z_{2}$ two points in $\calD_r,$ we have by convexity of $\calD_r$, 
$$
\ds\Re\frac{F(z_{2})-F(z_{1})}{z_{2}-z_{1}}=\ds\int_{0}^1\Re F'(z_{1}+t(z_{2}-z_{1}))dt>0,
$$ 
which proves the injectivity. Thus $F$ is a conformal transformation and $S$ is a Jordan curve.

The next step is to get a condition for the curve $S$ to be convex.
We use a well-known criterion \cite[Part 3, Chapter 3, 108]{PS}, stating that $S$ is convex if $$
\ds \Re \frac{z F''(z)}{F'(z)}> -1
$$ 
on $|z|=r.$ The estimate 
$$
\left| \Re \frac{z F''(z)}{F'(z)} \right| \leq \frac{|zF''(z)|}{|F'(z)|}\leq \frac{|z|\sum_{k\geq 2} k(k-1)|a_k| \rho^{k-2}}{|a_1|-|z|\sum_{k\geq 2} k|a_k| \rho^{k-2}}\leq  \frac{|z|\sum_{k\geq 2} k(k-1)|a_k| \rho^{k-2}}{|a_1| \left(1-\frac{1}{\sqrt{2}}\right)}
$$
for $r<\min(\rho, \rho')$  implies that the condition is satisfied once the left-hand side is less than one, that is
$$
r< \rho'' = \frac{|a_1|(2-\sqrt{2})}{2\sum_{k\geq 2} k(k-1)|a_k| \rho^{k-2}}.
$$

Now, the condition (i) of \cite[Theorem 12]{JW} is satisfied for all $r<\rho.$ As for (ii) we consider the function 
$$
g_{\tau}(\theta)=\sum_{k\geq 1} |a_{k}|r^k\cos 2\pi(k\theta+\gamma_{k}-\tau),
$$ 
where $\tau\in [0,1)$ is fixed and $a_{k}=|a_{k}|e^{2\pi i \gamma_k}.$ We have to prove that for $r$ explicitly small enough, its second derivative has exactly two roots on $[0,1).$ We compute
$$
h_\tau(\theta)=-\frac{g_\tau''(\theta)}{4\pi^2r}=|a_{1}|\cos 2\pi(\theta+\gamma_{1}-\tau)+r\sum_{k\geq 2}j^2|a_{k}|r^{k-2}\cos 2\pi(k\theta+\gamma_{k}-\tau),
$$ 
so
$$
h_\tau'(\theta)=-2\pi|a_{1}|\sin2\pi(\theta+\gamma_{1}-\tau)-2\pi r\sum_{k\geq 2}k^3|a_{k}|r^{k-2}\sin 2\pi(k\theta+\gamma_{k}-\tau).
$$ 
Take now 
$$
r<\frac{|a_{1}|}{\sqrt{2}\left(\ds\sum_{k\geq 2}k^3|a_{k}|\rho^{k-2}\right)}=\rho'''.
$$ 
Since $\ds\sum_{k\geq 2}k^3|a_k|\rho^{k-2}\geq \sum_{k\geq 2}k^2|a_{k}|\rho^{k-2},$ the function $h_\tau$ can possibly have zeroes only on the two intervals (modulo $1$) containing $\pm\frac{1}{4}-\gamma_{1}+\tau \mod 1$ defined by the condition $|\cos 2\pi(\theta+\gamma_{1}-\tau)|<\frac{1}{\sqrt{2}}.$ The same argument shows that $h_\tau$ is positive at $\theta=-\gamma_{1}+\tau\mod 1$ and negative at $\theta=\frac{1}{2}+\tau-\gamma_{1}\mod 1,$ and therefore it has at least one zero in each of these intervals.

On the other hand, when $|\cos 2\pi(\theta+\gamma_{1}-\tau)|<\frac{1}{\sqrt{2}},$ we see that
$$
|h_\tau'(\theta)|\geq 2\pi|a_{1}||\sin2\pi(\theta+\gamma_{1}-\tau)|-2\pi r\sum_{k\geq 2}k^3|a_{k}|r^{k-2}>2\pi|a_{1}|\left(\sqrt{1-\frac{1}{2}}-\frac{1}{\sqrt{2}}\right)=0,
$$ 
showing that there is exactly one zero of $h_\tau$ in each of the above intervals. 

We thus can apply \cite[Theorem 12]{JW}, obtaining that the conclusion of the theorem holds for $r< \rho_{0}=\min(\rho,\rho',\rho'',\rho''')= \min(\rho, \rho''').$
\end{proof}

 

By \cite[Corollary 2 of Theorem 4]{Mur}, there exists a set $P$ of positive density such that, for all $\ds p\in P,$ $|\eta_{f}(p)|> 1.$ We apply the above lemma to the functions $F=\mathfrak{f}_{p},$ $p\in P,$ defined by absolutely convergent series for $|z|<\rho+\epsilon$, with $\rho= \epsilon=\frac{1}{2},$ and to the radii $r_p=p^{-\sigma}.$ In the $\log$ case, the coefficient $|a_{1}|$ of the lemma is $|\eta_{f}(p)|,$ whereas we have for any $i,$ $|a_{i}|\leq 2.$ In the $\log'$ case, the coefficients are all multiplied by $\log p:$ $|a_{1}|$ is $|\eta_{f}(p)|\log p$ and $|a_{i}|\leq 2\log p.$ Thus, for $p$ such that $p\in P$ and 
$$
p^{-\sigma}<\frac{1}{\ds 8\sqrt{2}\sum_{k\geq 2}k^3 2^{-k}} = \frac{1}{204\sqrt{2}},
$$
we have that $\tilde{M}_{\sigma,p}(z)=O\left((1+|z|)^{-\frac{1}{2}}\right),$ proving thus (iii).





(iv), (v) By the Fourier inversion formula, we get $\mathcal{F}\tilde{M}_{\sigma,P}(-z)=M_{\sigma,P}.$ It is well-known \cite[\S 3]{JW} that $f=\calF g$ is absolutely continuous and admits continuous density, once the integral $\int_{\C} |g(w)| |dw|$ converges. Moreover, it possesses continuous partial derivatives of order $\leq p,$ if the convergence holds for $\int_{\C} |z|^{p} |g(w)| |dw|.$ Thus, to deduce the regularity properties of $M_{\sigma,P}$ it suffices to bound the growth of $\tilde{M}_{\sigma,P}(z).$ 

For the primes $p\notin \calP_{f},$ we use the trivial bound $|\tilde{M}_{\sigma,p}(z)|\leq 1$ from Lemma \ref{Fourier}. For all the other $p$ the bound from (iii) can be applied. 

Now, the identity $M_{\sigma,P}(z)=M_{\sigma,P}(\bar{z})$ is the consequence of (ii) together with the symmetry $\tilde{M}_\sigma(z,\bar{z})= \tilde{M}_\sigma(\bar{z}, z).$ The positivity of $M_{\sigma,P}(z)$ follows from the definition $M_{\sigma,P}(U)=\Vol(\ggo_{\sigma,P}^{-1}(U))$ together with the continuity that we have established.
\end{proof}

Let $\calP$ denote the set of all prime numbers, $\calP_x=\{p\in \calP \mid p\leq x\}.$

\begin{coro} 
\label{nonzero}
Given $r>0,$ $y>0,$ there exists $C=C(y, r, f)$ such that $\tilde{M}_{\sigma, \calP_x\setminus \calP_y}(z)=O((1+|z|)^{-r})$ and the function $M_{\sigma, \calP_x\setminus \calP_y}(z)$ is of class $\calC^r$ for all $x\geq C.$
\end{coro}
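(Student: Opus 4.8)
The plan is to fix, once and for all, a finite set of ``good'' primes taken from $\calP_f$ that by itself already forces the required polynomial decay, and to absorb every remaining prime of $\calP_x\setminus\calP_y$ using only the trivial bound $|\tilde M_{\sigma,p}(z)|\le 1$ of Lemma~\ref{Fourier}. Throughout, $\sigma$ is regarded as fixed, so that the set $\calP_f$ and the per-prime constants supplied by Proposition~\ref{localmain} are fixed as well.

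First I would use part (iii) of Proposition~\ref{localmain}: since $\calP_f$ has positive density and $\calP_y$ is finite, the set $\calP_f\setminus\calP_y$ is infinite, so we may choose an integer $N_0>2(r+2)$ and a subset $P_0\subseteq \calP_f\setminus\calP_y$ with $|P_0|=N_0$. Put $C:=\max P_0$, which depends only on $y$, $r$, $f$ (and $\sigma$). For every $x\ge C$ we then have $P_0\subseteq P:=\calP_x\setminus\calP_y$. Since $\tilde M_{\sigma,P}(z)=\prod_{p\in P}\tilde M_{\sigma,p}(z)$ and each factor has absolute value at most $1$, we get $|\tilde M_{\sigma,P}(z)|\le |\tilde M_{\sigma,P_0}(z)|=\prod_{p\in P_0}|\tilde M_{\sigma,p}(z)|$; applying the estimate of Proposition~\ref{localmain}(iii) to each of the $N_0$ remaining factors yields $|\tilde M_{\sigma,P}(z)|\ll_{P_0}(1+|z|)^{-N_0/2}\ll (1+|z|)^{-r}$, with an implied constant depending only on $P_0$, i.e. on $y$, $r$, $f$. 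This is the first assertion.

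For the regularity statement I would then take the shorter of two routes. Directly: for $x\ge C$ we have $|(\calP_x\setminus\calP_y)\cap\calP_f|\ge |P_0|=N_0>2(r+2)$, so part (v) of Proposition~\ref{localmain} applies verbatim to $P=\calP_x\setminus\calP_y$ and gives that $M_{\sigma,P}$ is of class $\calC^r$. Alternatively, one recovers this from the decay bound just proved together with the Fourier inversion $M_{\sigma,P}(z)=\calF\tilde M_{\sigma,P}(-z)$ and the Jessen--Wintner regularity criterion recalled in the proof of Proposition~\ref{localmain} (see \cite[\S 3]{JW}): the integral $\int_{\C}|z|^{r}|\tilde M_{\sigma,P}(z)|\,|dz|\ll\int_{\C}|z|^{r}(1+|z|)^{-N_0/2}\,|dz|$ converges because $N_0/2>r+2$, whence $M_{\sigma,P}$ has continuous partial derivatives up to order $r$.

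I do not anticipate a genuine difficulty; the single point that needs care is that the implied constant in $O((1+|z|)^{-r})$ must be independent of $x$. This is precisely why the good set $P_0$ should be frozen before letting $x\to\infty$, the growing tail $P\setminus P_0$ being discarded via the trivial bound $|\tilde M_{\sigma,p}|\le1$ rather than via the $p$-dependent estimate of Proposition~\ref{localmain}(iii). If one wanted $C$ to be genuinely independent of $\sigma$ on a compact $\sigma$-range, one would additionally need the bound in Proposition~\ref{localmain}(iii) to hold uniformly for $\sigma$ in that range, which would mean revisiting its proof; for the present statement this is not required.
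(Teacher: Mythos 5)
Your proof is correct and follows essentially the same route as the paper: both invoke the positive density of $\calP_f$ to guarantee that, for $x$ large enough, $(\calP_x\setminus\calP_y)\cap\calP_f$ contains more than $2(r+2)$ primes, and then apply parts (iii) and (v) of Proposition~\ref{localmain}. The paper's proof consists of a single sentence and does not spell out why the implied constant in $O((1+|z|)^{-r})$ may be taken independent of $x$; you handle that point carefully by freezing a finite good set $P_0$ and absorbing the remaining primes via the trivial bound $|\tilde M_{\sigma,p}|\le1$, which is exactly the right way to make the uniformity explicit and is what the paper's later arguments (Propositions~\ref{M_Derivability} and~\ref{M_Growth}) implicitly rely on.
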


\begin{proof} 
This comes directly from the fact that $\calP_{f}$ has positive density, implying that there exists $C,$ such that if $x\geq C,$ then $(\calP_{x}\setminus\calP_{y})\cap \calP_{f}$ contains more than $2r+4$ primes.
\end{proof}

\begin{remark}
The previous proposition is motivated by the following equidistribution result that is essentially implied by \cite[Lemma 4.3.1]{Ih1} applied to $\Psi=\Phi \circ \ggo_{\sigma, P}:$
$$
\lim_{m\to \infty} \Avg_{\chi\in \Gamma_m} \Phi(\Lg_P(f\otimes\chi, s))=\int_{T_P} \Phi(\ggo_{\sigma, P}(t_P))d^\times t_P,
$$
where $\Phi$ is a an arbitrary continuous function on $\C,$ $\Lg_P$ is either the logarithm or the logarithmic derivative of the corresponding partial product $\prod_{p\in P} L_p(f\otimes \chi, s)$ for $L(f\otimes \chi, s),$ and $\chi$ runs through all Dirichlet character of prime conductor $m \not\in P.$ Note a difference in the type of average considered in the aforementioned lemma with the one we use. The proof stays the same, being an application of Weyl's equidistribution criterion together with the orthogonality of characters.

Note, however, that it is not at all obvious to pass from the local equidistribution result to the global one. This also seems to give (after very significant effort) only a certain weaker form of global averaging results (e.g.\cite{Ih1}, \cite{IM1}). Following later papers by Ihara and Matsumoto, we use instead the convergence for particular test functions (quasi-characters, c.f. Theorem \ref{averagechar}) and then deduce the general case, using the information on the resulting distributions together with some general statements on convergence of measures.
\end{remark}

\subsection{Global results for \texorpdfstring{$\tilde{M}_{\sigma}$}{Msigma-tilde}.}

Let us establish some global properties of $\tilde{M},$ in particular the convergence of $\tilde{M}_{\sigma,P}$  to $\tilde{M}_{\sigma}.$ From now on we assume that $\Re s =\sigma > \frac{1}{2},$ without mentioning it in each statement.

\begin{prop}
\label{Euler_product}
\begin{enumerate}
\item The function $\tilde{M}_s(z_1, z_2)$ is entire in $z_1, z_2.$

\item We have the Euler product expansion 
$$
\tilde{M}_{s}(z_{1},z_{2})=\prod_{p}\tilde{M}_{s,p}(z_{1},z_{2}),
$$ 
which converges absolutely and uniformly on $\Re s\geq\frac{1}{2}+\epsilon$ and $|z_{1}|,$ $|z_{2}|\leq R,$ for any $\epsilon,R>0.$ 
\item  $\tilde{M}_{\sigma}(z)=O((1+|z|)^{-N})$ for all $N>0.$
\end{enumerate}
\end{prop}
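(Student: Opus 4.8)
The plan is to establish the three statements in sequence, deriving each from properties of the local factors $\tilde{M}_{s,p}$ together with the convergence of the Euler product. First, for (ii), I would compare the Dirichlet series $\tilde{M}_s(z_1,z_2)=\sum_n \lgo_{z_1}(n)\lgo_{z_2}(n)n^{-2s}$ with the formal product $\prod_p \tilde{M}_{s,p}(z_1,z_2)=\prod_p\sum_r \lgo_{z_1}(p^r)\lgo_{z_2}(p^r)p^{-2rs}$. Since $\lgo_z$ is multiplicative and $\lgo_{z_1}(n)\lgo_{z_2}(n)$ is therefore multiplicative in $n$, the formal identity is immediate; the real content is the absolute and uniform convergence on $\Re s\geq\frac12+\epsilon$, $|z_1|,|z_2|\leq R$. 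For this I would use Lemma \ref{MainEstimate}, which gives $|\lgo_z(n)|\ll_{\epsilon',R} n^{\epsilon'}$ uniformly for $|z|\leq R$; choosing $\epsilon'$ small enough that $2(\frac12+\epsilon)-2\epsilon'>1$ makes $\sum_n |\lgo_{z_1}(n)\lgo_{z_2}(n)|n^{-2\sigma}$ converge, which classically implies both the convergence of the product and that the product equals the sum. Statement (i) then follows: each $\tilde{M}_{s,p}(z_1,z_2)$ is entire in $(z_1,z_2)$ by Lemma \ref{Fourier}(i), the product converges uniformly on compact sets in $(z_1,z_2)$ for fixed $s$ (by the same estimate), and a uniform limit of holomorphic functions is holomorphic; alternatively one invokes the absolute and uniform convergence of the defining Dirichlet series directly.

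The substantive part is (iii): rapid decay of $\tilde{M}_\sigma(z)=\tilde{M}_\sigma(z,\bar z)$ as $|z|\to\infty$. Here I would split the Euler product as $\tilde{M}_\sigma(z)=\tilde{M}_{\sigma,\calP_y}(z)\cdot\tilde{M}_{\sigma,\calP\setminus\calP_y}(z)$ for a suitable finite cutoff $y$. For the tail factor $\tilde{M}_{\sigma,\calP\setminus\calP_y}(z)=\prod_{p>y}\tilde{M}_{\sigma,p}(z)$, I would use Proposition \ref{localmain}(iii): there is a set $\calP_f$ of positive density with $\tilde{M}_{\sigma,p}(z)\ll_{p,\sigma}(1+|z|)^{-1/2}$ for $p\in\calP_f$, while for every other prime the trivial bound $|\tilde{M}_{\sigma,p}(z)|\leq 1$ of Lemma \ref{Fourier}(iii) applies. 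Since $\calP_f$ has positive density, the set $(\calP\setminus\calP_y)\cap\calP_f$ is infinite, so for any fixed $N$ we may pick finitely many primes $p_1,\dots,p_{2N}\in(\calP\setminus\calP_y)\cap\calP_f$ and bound
$$
|\tilde{M}_{\sigma,\calP\setminus\calP_y}(z)|\leq \prod_{j=1}^{2N}|\tilde{M}_{\sigma,p_j}(z)|\ll_{N,\sigma}(1+|z|)^{-N},
$$
using $|\tilde{M}_{\sigma,p}(z)|\leq 1$ for all the remaining primes in the tail. This is exactly the mechanism already recorded in Corollary \ref{nonzero}, which I would simply cite: it yields $\tilde{M}_{\sigma,\calP_x\setminus\calP_y}(z)=O((1+|z|)^{-N})$ for $x$ large, and letting $x\to\infty$ (the convergence of the product ensures the remaining factors $\prod_{p>x}\tilde{M}_{\sigma,p}(z)$ stay bounded, indeed tend to $1$ uniformly on $|z|\leq R$ — but here we need uniformity in $z$, so a little care is needed). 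Finally the initial segment $\tilde{M}_{\sigma,\calP_y}(z)=\prod_{p\leq y}\tilde{M}_{\sigma,p}(z)$ is a finite product of entire functions, each bounded by $1$ in absolute value by Lemma \ref{Fourier}(iii), hence $|\tilde{M}_{\sigma,\calP_y}(z)|\leq 1$; multiplying gives $|\tilde{M}_\sigma(z)|\ll_{N,\sigma}(1+|z|)^{-N}$.

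The main obstacle I anticipate is the interchange of the limit $x\to\infty$ with the polynomial bound: Corollary \ref{nonzero} gives the estimate for each truncated product $\tilde{M}_{\sigma,\calP_x\setminus\calP_y}$ with an implied constant independent of $x$ (since only finitely many well-chosen primes in $\calP_f$ are used and the rest contribute factors of modulus $\leq 1$), and one must observe that passing to the limit preserves the bound because the omitted factors $\prod_{p>x}\tilde{M}_{\sigma,p}(z)$ have modulus $\leq 1$ by the trivial bound — so in fact no delicate tail estimate is needed, only the uniform bound $|\tilde{M}_{\sigma,p}(z)|\leq 1$ valid for every prime. Thus the cleanest route is: fix $N$, choose $2N$ primes $q_1,\dots,q_{2N}\in\calP_f$ (which exist since $\calP_f$ is infinite), and write $|\tilde{M}_\sigma(z)|=\prod_p|\tilde{M}_{\sigma,p}(z)|\leq\prod_{j=1}^{2N}|\tilde{M}_{\sigma,q_j}(z)|\ll_{N,\sigma}(1+|z|)^{-N}$, using $|\tilde{M}_{\sigma,p}(z)|\leq1$ for $p\notin\{q_1,\dots,q_{2N}\}$ and the decay bound from Proposition \ref{localmain}(iii) for the chosen primes. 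This sidesteps the truncation issue entirely, and is the argument I would write up.
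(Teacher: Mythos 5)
Your proposal is correct and follows essentially the same route as the paper: parts (i) and (ii) rest on the absolute convergence of $\sum_n |\lgo_{z_1}(n)\lgo_{z_2}(n)|n^{-2\sigma}$ guaranteed by Lemma \ref{MainEstimate}, and part (iii) uses the trivial bound $|\tilde{M}_{\sigma,p}(z)|\le 1$ together with the polynomial decay from Proposition \ref{localmain}(iii) for finitely many primes in $\calP_f$. Your ``cleanest route'' for (iii) is just an unwound form of the paper's appeal to Corollary \ref{nonzero} combined with the monotonicity $|\tilde{M}_{\sigma,P'}(z)|\le|\tilde{M}_{\sigma,P}(z)|$ for $P\subset P'$, so the two arguments coincide.
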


\begin{proof}
(i) This is a direct corollary of the absolute and uniform convergence of the series of analytic functions, defining $\tilde{M}_{s}(z_1, z_2).$

(ii) To prove the uniform convergence of the infinite product it is enough to establish it for the sum $\ds\sum_{p} |\tilde{M}_{s, p}(z_1, z_2)-1|.$
By Lemma \ref{MainEstimate} we see that
\begin{multline*}
|\tilde{M}_{s,p}(z_{1},z_{2})-1|\leq\sum_{r=1}^{\infty}|\lgo_{z_{1}}(p^r)| |\lgo_{z_{2}}(p^r)| p^{-2r\sigma}
\ll_{\epsilon', R} \sum_{r=1}^{\infty} p^{(2\epsilon'-2\sigma)r}\leq \sum_{r=1}^{\infty} p^{(-1-\epsilon)r}<2p^{-1-\epsilon}. 
\end{multline*}
which implies the convergence.

The limit of the infinite product equals $\tilde{M}_{s}.$ Indeed, the series for $\tilde{M}_s$ converges absolutely and uniformly, thus the difference between $\tilde{M}_s$ and the partial product over primes $p\leq x,$ which is $\ds\sum_{n\in S_{x}}\lgo_{z_1}(n)\lgo_{z_2}(n)n^{-2s}$, where $S_{x}$ is the set of integers $n$ divisible by at least one prime strictly greater than $x,$ tends to $0$ as $x\to\infty.$

(ii) Note that for any two sets $P\subset P'$ of primes, any $z\in \C,$ 
$$
|\tilde{M}_{\sigma,P'}(z)|\leq |\tilde{M}_{\sigma,P}(z)|.
$$ 
Corollary \ref{nonzero} implies that one can find a finite set of primes $P$ such that $\tilde{M}_{\sigma,P}(z)\ll {(1+|z|)}^{-N}.$ This is enough to conclude.
\end{proof}

\begin{remark}
Along the same lines as in \cite[3.20]{IM2}, one proves a more precise estimate:
$\ds|\tilde{M}_{\sigma,p}(z)-1|\ll |z|^2 p^{-2\sigma}$ in the $\log$ case, and $\ds|\tilde{M}_{\sigma,p}(z)-1|\ll |z|^2 p^{-2\sigma}\log p$ in the $\log'$ case with absolute constants in $\ll.$
\end{remark}
 
\begin{remark}
One should be able to write an explicit power series expansion of $\tilde{M}_s(z_1,z_2)$ similar to the one in \cite[\S 4, Theorem $\tilde{M}$]{IM3}.
\end{remark}

\subsection{Global results for \texorpdfstring{$M_{\sigma}$}{Msigma}.}

\begin{prop} 
\label{M_Derivability}
The sequence $(M_{\sigma,\calP_x}(z))_{x\gg 0}$ converges uniformly (as continuous functions) to $M_{\sigma}(z) := \mathcal{F}\tilde{M}_{\sigma}(-z).$ Moreover, for a fixed $y,$ the sequence of continuous functions $(M_{\sigma,\calP_x\setminus \calP_y})_{x\gg 0}$ converges uniformly to the continuous function  $M_{\sigma}^{(y)}=\ast_{\calP \setminus \calP_y}M_{\sigma,p}:= \calF \left(\prod_{p\in\calP\setminus \calP_y}\tilde{M}_{\sigma,p}(-z)\right),$ and we have $M_{\sigma}(z)=M_{\sigma,\calP_y}\ast M_{\sigma}^{(y)}.$
\end{prop}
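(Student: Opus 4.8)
The plan is to obtain both convergence statements from the Fourier inversion formula $M_{\sigma,P}(z)=\calF\tilde{M}_{\sigma,P}(-z)$ established in Proposition~\ref{localmain}, together with a majorant for the functions $w\mapsto\tilde{M}_{\sigma,\calP_x}(w)$ that is integrable on $\C\cong\bbR^2$ and independent of $x$, and then the dominated convergence theorem.

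Concretely, I would first fix $r=3$ and invoke Corollary~\ref{nonzero} with some $y<2$ (so that $\calP_y=\emptyset$): this produces a finite $C$ with $\tilde{M}_{\sigma,\calP_C}(w)\ll(1+|w|)^{-3}$. For every $x\ge C$ the factorisation $\tilde{M}_{\sigma,\calP_x}=\tilde{M}_{\sigma,\calP_C}\prod_{C<p\le x}\tilde{M}_{\sigma,p}$ together with the trivial bound $|\tilde{M}_{\sigma,p}(w)|\le1$ of Lemma~\ref{Fourier} gives $|\tilde{M}_{\sigma,\calP_x}(w)|\le|\tilde{M}_{\sigma,\calP_C}(w)|\ll(1+|w|)^{-3}$, which is the desired uniform integrable majorant; the same bound passes to the pointwise limit $\tilde{M}_\sigma$, which exists by convergence of the Euler product (Proposition~\ref{Euler_product}). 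Dominated convergence then yields $\int_\C|\tilde{M}_{\sigma,\calP_x}(w)-\tilde{M}_\sigma(w)|\,|dw|\to0$, and since $|\psi_{-z}(w)|=1$ we get $\sup_{z}|M_{\sigma,\calP_x}(z)-M_\sigma(z)|\le\int_\C|\tilde{M}_{\sigma,\calP_x}(w)-\tilde{M}_\sigma(w)|\,|dw|\to0$. This proves the first claim, and shows in passing that $M_\sigma$ is continuous.

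For the relative statement I would run the same argument with $\calP_x$ replaced by $\calP_x\setminus\calP_y$: Corollary~\ref{nonzero} applied to the given $y$ (and $r=3$) produces a finite $C$ with $\tilde{M}_{\sigma,\calP_C\setminus\calP_y}(w)\ll(1+|w|)^{-3}$, and peeling off the factors $\tilde{M}_{\sigma,p}$ with $p>C$ (each of modulus $\le1$) furnishes the same uniform majorant for $\tilde{M}_{\sigma,\calP_x\setminus\calP_y}$. The partial products $\prod_{y<p\le x}\tilde{M}_{\sigma,p}(w)$ converge, being the tail of the convergent Euler product of Proposition~\ref{Euler_product}, to a function $\tilde{M}^{(y)}_\sigma(w):=\prod_{p>y}\tilde{M}_{\sigma,p}(w)$ with $\tilde{M}_\sigma=\tilde{M}_{\sigma,\calP_y}\,\tilde{M}^{(y)}_\sigma$; dominated convergence then gives uniform convergence of $M_{\sigma,\calP_x\setminus\calP_y}$ to $\calF\tilde{M}^{(y)}_\sigma(-z)=:M^{(y)}_\sigma(z)$. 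Finally, since the Fourier transform exchanges convolution and product, $\calF(M_{\sigma,\calP_y}\ast M^{(y)}_\sigma)=\tilde{M}_{\sigma,\calP_y}\,\tilde{M}^{(y)}_\sigma=\tilde{M}_\sigma=\calF M_\sigma$, and injectivity of the Fourier transform (on tempered distributions, into which all these objects embed) gives $M_\sigma=M_{\sigma,\calP_y}\ast M^{(y)}_\sigma$.

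I expect the decisive step to be the construction of the single integrable majorant valid for all large $x$: this is precisely where the positive density of $\calP_f$ enters, through Corollary~\ref{nonzero} and Proposition~\ref{localmain}(iii), and it is essential that the trivial bound $|\tilde{M}_{\sigma,p}|\le1$ lets one append arbitrarily many further prime factors without spoiling the decay. A secondary technical point is keeping track of which of $M_{\sigma,\calP_y}$, $M^{(y)}_\sigma$, $M_\sigma$ are honest $L^1$-functions and which are merely compactly supported measures or tempered distributions — if $\calP_y$ contains too few primes of $\calP_f$ the measure $M_{\sigma,\calP_y}$ has no density — so the convolution identity is most safely phrased and proved in the sense of distributions.
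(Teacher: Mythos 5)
Your proof is correct and essentially the same as the paper's: both rest on Corollary~\ref{nonzero} plus the trivial bound $|\tilde{M}_{\sigma,p}|\le 1$ to get a uniform integrable decay for $\tilde{M}_{\sigma,\calP_x}$, then pass to $L^1$-convergence and use $\|\calF\phi\|_\infty\le\|\phi\|_{L^1}$, finishing with the Fourier transform of the Euler factorization for the convolution identity. The only cosmetic difference is that you package the tail control as an explicit dominating function and invoke dominated convergence, whereas the paper does the equivalent $\epsilon$-bookkeeping by splitting $\C$ into $\calD_r$ and its complement.
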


\begin{proof}
First of all, the notation $x\gg 0$ is used to make sure that all the elements of the sequence are continuous functions.

Fix $\epsilon>0.$ One can find a closed disk $\calD_r$ and  $x'$ large enough, so that for all $P'\supset \calP_{x'},$ 
$$
\ds\int_{\C\setminus \calD_r}|\tilde{M}_{\sigma,P'}(w)||dw|<\epsilon.
$$
The sequence $(\tilde{M}_{\sigma,\calP_x}(z)_{x\gg 0}$ converges uniformly to $\tilde{M}_{\sigma}(z)=\tilde{M}_{\sigma}(z)$ on $\calD_r$ by Proposition \ref{Euler_product}, thus we can find $x''$ large enough to guarantee for $x>\max(x', x''),$ 
$$
\|\calF\tilde{M}_{\sigma}(z)-\calF\tilde{M}_{\sigma,\calP_x}(z)\|_{\infty}<2\epsilon.
$$ 
This proves that $(\calF\tilde{M}_{\sigma,\calP_x}(z))_{x\gg 0}=(M_{\sigma,\calP_x}(-z))_{x\gg 0}$ converges uniformly to $\calF\tilde{M}_{\sigma}(z)=M_{\sigma}(-z).$ 

The same arguments apply if we remove $\calP_y$ from the set of all primes. Moreover, taking the Fourier transform of $\tilde{M}_{\sigma}=\tilde{M}_{\sigma,\calP_y}\times \prod_{p\notin \calP_y}\tilde{M}_{\sigma,p},$ we see that 
$M_{\sigma}(z)=M_{\sigma,\calP_y}\ast (\ast_{\calP\setminus \calP_y}M_{\sigma,p}).$
\end{proof}

\begin{coro}
\label{M_is_not_bad}
We have 
\begin{enumerate} 

\item $M_\sigma(z)=M_\sigma(\bar{z})\geq 0;$ 

\item $\ds\int_{\C}M_{\sigma}(z)|dz|=1;$

\item $M_\sigma(z)\in \calC^\infty$ and the partial derivatives of $M_{\sigma, \calP_x}$  converge uniformly to those of $M_\sigma;$

\item If $\sigma>1,$ the support of $M_\sigma$ is compact. 
\end{enumerate}
\end{coro}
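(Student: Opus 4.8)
The plan is to prove the four assertions of Corollary \ref{M_is_not_bad} by transferring, through the uniform convergence established in Proposition \ref{M_Derivability}, the corresponding properties that are already known for the partial functions $M_{\sigma,\calP_x}$ from Proposition \ref{localmain}, and by exploiting the rapid decay of $\tilde{M}_\sigma$ from Proposition \ref{Euler_product}(iii). For (i), each $M_{\sigma,\calP_x}$ is a nonnegative continuous function with $M_{\sigma,\calP_x}(z)=M_{\sigma,\calP_x}(\bar z)$ (by Proposition \ref{localmain}(iv), once $x$ is large enough that $|\calP_x\cap\calP_f|>4$); since $M_{\sigma,\calP_x}\to M_\sigma$ uniformly, both the nonnegativity and the symmetry $z\mapsto\bar z$ pass to the limit. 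For (ii), one either uses $\calF M_\sigma(0)=\int_\C M_\sigma(z)|dz|$ together with $\tilde M_\sigma(0)=M_\sigma(0)$... more carefully: $M_\sigma=\calF\tilde M_\sigma(-\cdot)$, so $\int_\C M_\sigma(z)|dz|=(\calF M_\sigma)(0)$ evaluated appropriately equals $\tilde M_\sigma(0,\bar 0)=\prod_p\tilde M_{\sigma,p}(0)=\prod_p 1=1$ from the Euler product; alternatively each $M_{\sigma,\calP_x}$ has mass $1$ by Proposition \ref{localmain}(i), and the uniform convergence together with the uniform $L^1$-control $\int_{\C\setminus\calD_r}|\tilde M_{\sigma,\calP_x}|<\epsilon$ (used in the proof of Proposition \ref{M_Derivability}) lets one pass the total mass to the limit. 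I would present the Euler-product computation of $\tilde M_\sigma(0)$ as the cleanest route.

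For (iii), the key input is Proposition \ref{Euler_product}(iii): $\tilde M_\sigma(z)=O((1+|z|)^{-N})$ for every $N$, hence $\int_\C |z|^p|\tilde M_\sigma(z)|\,|dz|<\infty$ for all $p$, so by the Jessen--Wintner smoothness criterion (\cite[\S3]{JW}, already invoked in the proof of Proposition \ref{localmain}(iv),(v)) $M_\sigma=\calF\tilde M_\sigma(-\cdot)$ is of class $\calC^\infty$. The uniform convergence of the partial derivatives of $M_{\sigma,\calP_x}$ to those of $M_\sigma$ follows by exactly the argument of Proposition \ref{M_Derivability} applied to $z^\alpha\bar z^\beta\tilde M_{\sigma,\calP_x}(z)$ in place of $\tilde M_{\sigma,\calP_x}(z)$: one uses that $\partial^{a,b}(M_{\sigma,\calP_x})$ is, up to constants, the Fourier transform of a monomial times $\tilde M_{\sigma,\calP_x}$, the uniform convergence $\tilde M_{\sigma,\calP_x}\to\tilde M_\sigma$ on compacts, and the uniform tail bound $\int_{\C\setminus\calD_r}|z|^{a+b}|\tilde M_{\sigma,\calP_x}(z)|\,|dz|<\epsilon$ for $x$ large, which is again guaranteed by Corollary \ref{nonzero} since $\calP_f$ has positive density.

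For (iv), when $\sigma>1$ all the defining Dirichlet series converge absolutely in a neighbourhood, and more to the point the functions $\ggo_{\sigma,p}(t)=\fg_p(p^{-\sigma}t)$ satisfy $\sum_p \sup_{|t|=1}|\ggo_{\sigma,p}(t)|<\infty$ (since $|\fg_p(p^{-\sigma}t)|\ll (\log p)p^{-\sigma}$ resp. $\ll p^{-\sigma}$, summable for $\sigma>1$); hence the "infinite convolution" $\ast_p M_{\sigma,p}$ is supported in a compact ball, because $M_{\sigma,\calP_x}$ is supported in $\{|w|\le \sum_{p\le x}\sup_{|t|=1}|\ggo_{\sigma,p}(t)|\}$, a bound uniform in $x$, and the uniform limit of continuous functions all supported in a fixed compact set is itself supported there. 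I would phrase this via Proposition \ref{localmain}(i): $\operatorname{supp} M_{\sigma,\calP_x}$ equals the image of $\ggo_{\sigma,\calP_x}$, which lies in the fixed ball of radius $\sum_p\|\ggo_{\sigma,p}\|_\infty$, and conclude by the uniform convergence of Proposition \ref{M_Derivability}.

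I expect the only genuine subtlety to be item (iv): one must make sure the compactness bound on $\operatorname{supp} M_{\sigma,\calP_x}$ is truly independent of $x$, which hinges on the summability $\sum_p\|\ggo_{\sigma,p}\|_\infty<\infty$ for $\sigma>1$ — a short estimate using $|\alpha_f(p)|=1$, $|\beta_f(p)|\le 1$ and $|1-\alpha_f(p)p^{-\sigma}t|\ge 1-p^{-\sigma}$. Items (i), (ii), (iii) are essentially formal consequences of the already-established uniform convergence plus the decay of $\tilde M_\sigma$, so they require no new ideas, only bookkeeping.
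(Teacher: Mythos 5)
Your proof is correct, and items (i), (ii), and (iv) follow the paper's argument essentially verbatim (uniform convergence of $M_{\sigma,\calP_x}$ to transfer nonnegativity and the symmetry $z\mapsto\bar z$; the identity $\int_\C M_\sigma\,|dz|=\tilde M_\sigma(0)=1$; and the support of $M_{\sigma,\calP_x}$ lying in a fixed ball of radius $\sum_p\sup_{|t|=1}|\ggo_{\sigma,p}(t)|<\infty$ when $\sigma>1$, since $\sum_p p^{-\sigma}$ and $\sum_p (\log p)p^{-\sigma}$ converge). The one place you diverge from the paper is item (iii): the paper writes $M_\sigma=M_{\sigma,\calP_y}\ast M_\sigma^{(y)}$, takes $y$ large enough that $M_{\sigma,\calP_y}$ is already $\calC^r$ by Corollary \ref{nonzero}, and differentiates under the convolution via $D^{(a,b)}(f\ast g)=(D^{(a,b)}f)\ast g$, then lets $x\to\infty$ in $M_{\sigma,\calP_x\setminus\calP_y}\to M_\sigma^{(y)}$. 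You instead differentiate under the Fourier transform, observing that $D^{(a,b)}M_{\sigma,\calP_x}$ is the inverse Fourier transform of a monomial times $\tilde M_{\sigma,\calP_x}$, and rerun the tail-estimate of Proposition \ref{M_Derivability} with the extra polynomial weight, using Corollary \ref{nonzero} to ensure the weighted tails $\int_{\C\setminus\calD_r}|z|^{a+b}|\tilde M_{\sigma,\calP_x}(z)|\,|dz|$ are uniformly small. Both routes rest on the same two ingredients — the uniform decay of the partial $\tilde M_{\sigma,P}$ from Corollary \ref{nonzero} and the uniform convergence of Proposition \ref{M_Derivability} — and both are correct; the paper's version isolates the smoothness in the finite factor $M_{\sigma,\calP_y}$, whereas yours works directly with the Fourier side and avoids invoking convolution differentiation rules. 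Minor bookkeeping note: your phrase ``$\tilde M_\sigma(0)=M_\sigma(0)$'' in the discussion of (ii) is a slip (it is $\int_\C M_\sigma = \tilde M_\sigma(0)$, as you correctly state in the cleaner version that follows), but this does not affect the argument.
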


\begin{proof}
(i) This is obvious from the corresponding properties of $M_{\sigma, P}.$

(ii) Using the identity $M_\sigma(z)=\calF \tilde{M}_\sigma(-z),$ we see that 
$
\ds\int_{\C}M_{\sigma}(z)|dz|=\tilde{M}_\sigma(0)=1.
$

(iii) We note that, given $p,$ there exists $y_0$ such that for $p>y_0,$ $M_{\sigma, \calP_y}$ has continuous partial derivatives up to order $p.$ Now, letting $\ds D^{(a,b)}=\frac{\partial^{a+b}}{\partial^a z\,\partial^b \bar{z}},$ we have $D^{(a,b)}(f\ast g)=(D^{(a,b)} f)\ast g,$ if $f$ admits the corresponding partial derivative. The statement now follows from the uniform convergence of $M_{\sigma, \calP_x\setminus\calP_y}^{(y)}$ to $M_\sigma^{(y)}.$

(iv) Indeed, by the uniform convergence of $M_{\sigma, P}$ to $M_\sigma,$ and the fact that the support of $M_{\sigma, P}$ is equal to the image of $\ggo_{s, P},$ it is enough to prove that the latter is bounded for $\sigma >1$. This is true since the series $\sum\limits_p p^{-\sigma}$ converges for $\sigma >1.$
\end{proof}

We will now obtain the rapid decay of $M_{\sigma}$ à la Jessen--Wintner by proving the following proposition, which is crucial for the proof of the main theorem of this section.

\begin{prop} 
\label{M_Growth}
For any $\lambda>0,$   $M_{\sigma}(z)=O_{\sigma, \lambda}(e^{-\lambda |z|^2}),$ as $|z|\to\infty.$ The same is true for all its partial derivatives.
\end{prop}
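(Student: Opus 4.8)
The plan is to imitate the Jessen--Wintner argument for rapid decay of a convolution density, using the decomposition $M_\sigma = M_{\sigma,\calP_y} \ast M_\sigma^{(y)}$ from Proposition \ref{M_Derivability} and the explicit sub-Gaussian bound on the local Fourier factors obtained inside Proposition \ref{localmain}. Concretely, I would first fix $\lambda>0$ and observe that it suffices to bound the Fourier transform $\tilde M_\sigma(z)=\prod_p \tilde M_{\sigma,p}(z)$ by $O(e^{-c|z|^2})$ for some $c=c(\sigma)>0$, because then $M_\sigma(z)=\calF\tilde M_\sigma(-z)$ and all its partial derivatives (which are Fourier transforms of $z\mapsto (\text{polynomial})\cdot\tilde M_\sigma(z)$) inherit a bound of the form $O_{\sigma,\lambda}(e^{-\lambda|z|^2})$ after a standard completion-of-the-square / contour-shift estimate; one extracts the sharp constant by an elementary optimization, exactly as in \cite[proof of Theorem 14]{JW}. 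So the whole problem reduces to a uniform-in-$z$ Gaussian upper bound for $\tilde M_\sigma(z)$.

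For that, the key point is that $\tilde M_{\sigma,p}(z)=\int_{\C^1}\exp(i\Re(\ggo_{\sigma,p}(t)\bar z))\,d^\times t$ by Lemma \ref{Fourier}(ii). For $p\in\calP_f$ the function $\ggo_{\sigma,p}$ traces a convex Jordan curve $S_p$ (this was established in the proof of Proposition \ref{localmain}(iii)), and for such an oscillatory integral over a convex curve one has the classical estimate $|\tilde M_{\sigma,p}(z)|\le A_p(1+\kappa_p|z|)^{-1/2}$ coming from stationary phase / van der Corput, where $\kappa_p>0$ is a lower bound for the curvature of $S_p$ — this is essentially what \cite[Theorem 12]{JW} gives, made quantitative. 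Taking the product over $p\in P\cap\calP_f$ with $|P\cap\calP_f|$ large, we get polynomial decay $|\tilde M_{\sigma,P}(z)|\ll (1+|z|)^{-r}$ for any fixed $r$ (this is Corollary \ref{nonzero}); but polynomial decay is not enough. To upgrade to Gaussian decay one must let the number of effective primes grow with $|z|$: split $\calP_f=\bigsqcup_j P_j$ into disjoint blocks, each of size about $N_0$, and use $|\tilde M_{\sigma,P_j}(z)|\le \frac12$ (say) whenever $|z|$ exceeds a threshold $t_j$ depending on the smallest prime in $P_j$, which grows like a power of $j$. Bounding the number of blocks $j$ with $t_j\lesssim|z|$ from below by a constant multiple of $|z|^2$ (since the $j$-th block sits at primes of size $\approx j^{O(1)}$, the thresholds $t_j$ grow polynomially in $j$, hence at least $\gg|z|^\delta$ blocks are "active"; one then iterates the halving), and combining with the trivial bound $|\tilde M_{\sigma,p}(z)|\le 1$ for the remaining primes, yields $|\tilde M_\sigma(z)|\le 2^{-c|z|^2}$. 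I would organize this as: (1) the local convex-curve bound with an explicit curvature constant; (2) grouping primes into blocks and controlling how the thresholds grow; (3) summing the contributions to get the Gaussian bound on $\tilde M_\sigma$; (4) transferring to $M_\sigma$ and its derivatives via Fourier inversion and optimizing the exponent to get $O_{\sigma,\lambda}(e^{-\lambda|z|^2})$ for arbitrary $\lambda$.

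The main obstacle is step (2)–(3): making the oscillatory-integral estimate uniform in $z$ with an explicit dependence on $p$ (the curvature of $S_p$ degenerates as $p\to\infty$, since the curve shrinks like $p^{-\sigma}$), and then bookkeeping the blocks so that the number of "active" halvings genuinely grows like $|z|^2$ rather than like $\log|z|$. This is exactly the delicate counting that forces the quantitative version of \cite[Theorem 13]{JW} proved above; here one must additionally check that the constants are uniform over $p\in\calP_f$ after rescaling by $p^{-\sigma}$, using that $|\eta_f(p)|>1$ on $\calP_f$ bounds the linear coefficient $a_1$ of $\fg_p$ from below while all higher coefficients are $O(1)$ (resp.\ $O(\log p)$ in the $\log'$ case, a harmless factor). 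I expect the rest — the Fourier-inversion transfer and the optimization giving an arbitrary $\lambda$ — to be routine.
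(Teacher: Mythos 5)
Your proposed route is genuinely different from the paper's, and I don't think it closes: there are two independent gaps.

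\textbf{The counting fails.} You want $\gg |z|^2$ "active" halvings, but the local oscillatory-integral gain for a prime $p$ only kicks in once $|z|$ exceeds roughly $p^\sigma$: the curve traced by $\ggo_{\sigma,p}$ sits at scale $p^{-\sigma}$, so the van~der~Corput bound for $\tilde M_{\sigma,p}$ has the shape $(1+p^{-\sigma}|z|)^{-1/2}$, and for $p^\sigma>|z|$ the only available bound is the trivial $|\tilde M_{\sigma,p}(z)|\le 1$. The $j$-th prime is of size $\approx j\log j$, so the threshold for your $j$-th block is $t_j\approx (j\log j)^\sigma$, and the number of active blocks at scale $|z|$ is $\approx |z|^{1/\sigma}/\log|z|$. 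Since $\sigma>\frac12$ we have $1/\sigma<2$, and a more careful computation (taking logs of the product $\prod_{p^\sigma\le|z|}(p^{-\sigma}|z|)^{-1/2}$ and using the PNT to evaluate $\sum_{p\le X}\log(X/p)$) shows the best you can extract this way is $|\tilde M_\sigma(z)|\ll\exp(-c\,|z|^{1/\sigma}/\log|z|)$. For $\sigma>1/2$ that is strictly weaker than a Gaussian bound, and it degrades to $e^{-c|z|^{1-}}$ as $\sigma\to 1$.

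\textbf{The transfer is also not automatic.} Even if you had $|\tilde M_\sigma(z)|\le e^{-c|z|^2}$ on the real slice $z_2=\bar z_1$, that alone does not yield Gaussian decay of $M_\sigma=\calF\tilde M_\sigma(-\cdot)$: decay of a Fourier transform controls smoothness of the function, not its decay. The "contour shift" you allude to would require quantitative Gaussian-type control of the entire extension $\tilde M_\sigma(z_1,z_2)$ in a complex tube around the real slice, which you have not established and which is not supplied by the real-axis bound.

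\textbf{What the paper does instead.} It adapts Jessen--Wintner's Paley--Zygmund argument on the physical side rather than the Fourier side: it bounds the exponential moment $\int_\C M_\sigma(w)\,e^{\lambda|w|^2}\,|dw|\le K(\lambda,\sigma)$ for every $\lambda>0$. This is done by writing the partial sum $s_n=\sum_j \fg_{p_j}(r_{p_j}e^{i\theta_j})$, splitting off the "linear" part $t_n=\sum_{j>q}a_{1,p_j}r_{p_j}e^{i\theta_j}$ of the tail, showing $|s_n-t_n|\ll A(q)$ uniformly in $n$, and using Jessen's inequality plus $|s_n|^2\le 2|s_n-t_n|^2+2|t_n|^2$ together with the sub-Gaussian bound $\int e^{2\lambda|t_n|^2}\le\bigl(1-2\lambda\sum_{j>q}|a_{1,p_j}|^2r_{p_j}^2\bigr)^{-1}$, where $q$ is chosen (depending on $\lambda$, possible because $\sum_p|a_{1,p}|^2p^{-2\sigma}<\infty$ for $\sigma>1/2$) to keep that denominator positive. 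Fatou then gives the same moment bound for $M_\sigma^{(y)}$, and finally the compact support of $M_{\sigma,\calP_y}$ in the convolution identity $M_\sigma=M_{\sigma,\calP_y}\ast M_\sigma^{(y)}$ converts the moment bound into the pointwise estimate $M_\sigma(z)\ll e^{-\lambda(|z|-\rho)^2}$; derivatives are handled by choosing $y$ large enough that $M_{\sigma,\calP_y}$ is $\calC^r$ and applying $D^{(a,b)}(f\ast g)=(D^{(a,b)}f)\ast g$. This moment approach is the right one precisely because it never needs a pointwise Gaussian bound on $\tilde M_\sigma$ at finite $\sigma$.
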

 
\begin{proof} We adapt the proof of Jessen--Wintner \cite[Theorem 16]{JW} to our specific case. The proof is based on an argument of Paley and Zygmund. 
 
Let $\sigma>\frac{1}{2}$ and $\lambda>0$ be fixed. Let $p_{1}<\dots<p_{i}\dots$ denote the sequence of all prime numbers. Write $P_{j}=\{p_{1},\dots,p_{j}\}.$ We have
$$
\fg_{p}(z)=\sum_{i\geq 1} a_{i,p}z^i,
$$ 
on the disk $B(0,1).$  By writing $1-\eta_{f}(p)z+z^2=(1-\alpha_f(p)z)(1-\beta_f(p)z),$ where $|\alpha_f(p)|$ and $|\beta_f(p)|$ are less than or equal to $1,$ we see that for all $i,$ $|a_{i,p}|\leq 2\log p$ in the $\log'$ case and $\leq 2$ in the $\log$ case respectively.
 
Put $r_{p}=p^{-\sigma}.$ Then the series $\ds\sum_{p} |a_{1,p}|^2r_{p}^2$ converges, so that we can find $q$ such that 
$$
d=1-2\lambda \sum_{p>p_{q}} |a_{1,p}|^2r_{p}^2>0.
$$

For $n>q$ let us look at the partial sums
$$
s_{n}(\theta_{1},\dots,\theta_{n})=\sum_{j=1}^n \fg_{p_{j}}(r_{p_{j}}e^{i\theta_{j}})\quad\text{and}\quad t_{n}(\theta_{{q+1}},\dots,\theta_{n})=\sum_{j=q+1}^na_{1,p_{j}}r_{p_{j}}e^{i\theta_{j}},
$$
where $\theta_j\in [0, 2\pi].$ We can bound the difference by
\begin{align*}
|s_{n}(\theta_{1},\dots,\theta_{n})-t_{n}(\theta_{q+1},\dots,\theta_{n})|&\leq \left|\sum_{j=1}^q \fg_{p_{j}}(r_{p_{j}}e^{i\theta_{j}})\right|+\sum_{j=q+1}^n\sum_{k=2}^{\infty} |a_{k,p_{j}}|r^k_{p_{j}}\\
&\leq \left |\sum_{j=1}^q \fg_{p_{j}}(r_{p_{j}}e^{i\theta_{j}})\right|+2\sum_{j=q+1}^n r^2_{{p_{j}}}(1-r_{p_{j}})^{-1}\log p_{j}\\
&\leq \left|\sum_{j=1}^q \fg_{p_{j}}(r_{p_{j}}e^{i\theta_{j}})\right|+8\sum_{j=q+1}^{+\infty}r^2_{{p_{j}}}\log p_{j}\\
&\leq \sum_{j=1}^q \sup\limits_{\vartheta_j\in[0, 2\pi]} |\fg_{p_{j}}(r_{p_{j}}e^{i\vartheta_{j}})|+8\sum_{j=q+1}^{+\infty}r^2_{{p_{j}}}\log p_{j}\ll A(q)
\end{align*}
as $\ds (1-r_{p_{j}})^{-1}\leq \frac{\sqrt{2}}{\sqrt{2}-1}.$ Here $A$ depends only on $q$ and not on $n$.

By an inequality of Jessen \cite[p. 290--291]{Je}, writing $|s_{n}|^2\leq 2|s_{n}-t_{n}|^2+2|t_{n}|^2,$ we obtain
\begin{multline}
\label{Jessenineq}
\int_{T_{P_{n}}}\exp(\lambda |s_{n}(\theta_{1},\dots,\theta_{n})|^2)d\theta_{1}\dots d\theta_{n}  \\
\leq e^{2\lambda A(q)^2} \int_{T_{P_{n,q}}}\exp(2\lambda |t_{n}(\theta_{q+1},\dots,\theta_{n})|^2)d\theta_{q+1}\dots d\theta_{n},\\
\leq \frac{e^{2\lambda A(q)^2}}{\ds 1-2\lambda\sum_{j={q+1}}^n |a_{1,p}|^2r_{p_{j}}^2}\leq e^{2\lambda A(q)^2}d^{-1}=K.
\end{multline}
where $P_{n,q}=P_{n}\setminus P_{q}.$ Noting that $M_{\sigma, P_n}(e^{\lambda|w|^2})$ is just the left-hand side of \eqref{Jessenineq}, we deduce:
$$
M_{\sigma,P_{n}}(e^{\lambda |w|^2})\leq K,
$$ 
where $K$ is independent of $n.$  Thus by Fatou lemma and Proposition \ref{M_Derivability} we conclude that
$$
\int_{\C}M_{\sigma}(w)e^{\lambda |w|^2}dw\leq K.
$$

Let us take $y$ such that $M_{\sigma,\calP_y}$ is a continuous function.
It is clear that if we remove all the terms corresponding to $p\leq y,$ and take $q>y$ large enough we obtain exactly the same bound for the function  $M_{\sigma}^{(y)}=\ast_{p\in \calP\setminus \calP_y}M_{\sigma,p}: $
$$
\int_{\C}M_{\sigma}^{(y)}(w)e^{\lambda |w|^2}dw\leq K.
$$

If $\calD_\rho=\overline{B(0, \rho)},$ $B=\overline{B(z,\rho)}$ denote the corresponding closed discs, $z\notin \calD_\rho,$ then 
$$
{e^{\lambda(|z|-\rho)^2}}\int_{B} M_{\sigma}^{(y)}(w)|dw|=\int_{B}e^{\lambda(|z|-\rho)^2}M_{\sigma}^{(y)}(w)|dw|\leq \int_{B}e^{\lambda|w|^2}M_{\sigma}^{(y)}(w)|dw|\leq K.
$$ 

Let $\rho$ be large enough, so that $\calD_\rho$ contains the support of $M_{\sigma,\calP_{y}}.$ Then 
\begin{align*}
M_{\sigma}(z)&=(M_{\sigma,\calP_y}\ast M_{\sigma}^{(y)})(z)= \int_{\C}M_{\sigma,\calP_y} (w)M_{\sigma}^{(y)}(z-w)|dw|=\int_{\calD_\rho}M_{\sigma,\calP_y} (w)M_{\sigma}^{(y)}(z-w)|dw|\\
&\leq \sup_{\calD_\rho}M_{\sigma,\calP_y} (w)\cdot \int_{\C}M_{\sigma}^{(y)}(z-w)|dw|\leq Ke^{-\lambda(|z|-\rho)^2}\sup_{\calD_\rho}M_{\sigma,\calP_y} (w).
\end{align*}
As $y,\rho,\rho$ are independent of $z,$ we obtain that 
$$
M_{\sigma}(z)=O(e^{-\lambda|z|^2}).
$$

According to Corollary \ref{nonzero}, one can take $y$ large enough so that $M_{\sigma, \calP_y}$ has continuous partial derivatives of order up to $p$. We also have $D^{(a,b)}(f\ast g)=D^{(a,b)}(f)\ast g = f \ast D^{(a,b)}(g).$ Thus, the same arguments as above imply that the required estimate holds for partial derivatives of $M_\sigma(z)$ of any order $p.$
\end{proof}


\begin{coro}
\label{M_is_Scwartz}
The functions $M_\sigma(z)$ and $\tilde{M}_\sigma(z)$ belong to the Schwartz space, that is they go to zero as $|z|\to \infty$ faster than any inverse power of $|z|,$ as do all their derivatives.
\end{coro}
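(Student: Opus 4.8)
The plan is to deduce the statement directly from the rapid-decay estimates already established. For $M_\sigma$ itself, recall from Corollary \ref{M_is_not_bad}~(iii) that $M_\sigma\in\calC^\infty(\C)$, and from Proposition \ref{M_Growth} that for every $\lambda>0$ and every pair $(a,b)$ one has $D^{(a,b)}M_\sigma(z)=O_{\sigma,\lambda,a,b}(e^{-\lambda|z|^2})$ as $|z|\to\infty$, where $D^{(a,b)}=\frac{\partial^{a+b}}{\partial^a z\,\partial^b\bar z}$. Since for any polynomial $p$ and any $\lambda>0$ the product $p(z)e^{-\lambda|z|^2}$ is bounded on $\C$ and tends to $0$ at infinity, this immediately yields $\sup_{z\in\C}\bigl|z^k\bar z^l\,D^{(a,b)}M_\sigma(z)\bigr|<\infty$ for all $k,l,a,b$; that is, $M_\sigma$ belongs to the Schwartz space $\mathcal{S}(\C)$.

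For $\tilde M_\sigma$ I would use that it is, up to harmless reflections, the Fourier transform of $M_\sigma$. Indeed, by definition $M_\sigma(z)=\calF\tilde M_\sigma(-z)$; since the Fourier transform on $\C\simeq\bbR^2$ with the normalization fixed in \S\ref{section_distribution} satisfies $\calF^2\phi(w)=\phi(-w)$ and commutes with the reflection $\phi(w)\mapsto\phi(-w)$, applying $\calF$ to this identity gives $\tilde M_\sigma=\calF M_\sigma$. (Alternatively, one may simply invoke Fourier inversion for the tempered distribution $M_\sigma$, which is legitimate because $M_\sigma\in L^1$ by Proposition \ref{M_Growth}.) As the Fourier transform preserves $\mathcal{S}(\C)$ and $M_\sigma\in\mathcal{S}(\C)$ by the previous paragraph, we conclude $\tilde M_\sigma\in\mathcal{S}(\C)$ as well.

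There is no serious obstacle here: the content is entirely contained in Proposition \ref{M_Growth} (whose Jessen--Wintner-style proof is the delicate point) together with the stability of the Schwartz class under the Fourier transform. The only point requiring a little care is matching the normalization of $\calF$ used in \S\ref{section_distribution} so that the relation $\tilde M_\sigma=\calF M_\sigma$ comes out with the correct signs; this bookkeeping can in fact be bypassed, since Proposition \ref{Euler_product}~(iii) already supplies $\tilde M_\sigma(z)=O((1+|z|)^{-N})$ for all $N$, and the analogous decay for the derivatives of $\tilde M_\sigma$ follows by differentiating the absolutely and uniformly convergent Euler product of Proposition \ref{Euler_product}~(ii) term by term — though the Fourier-theoretic argument is the shorter of the two.
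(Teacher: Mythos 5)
Your proof takes essentially the same route as the paper: Proposition~\ref{M_Growth} gives super-polynomial decay of $M_\sigma$ and all its derivatives (so $M_\sigma\in\mathcal{S}$), and then $\tilde M_\sigma$ is Schwartz because it is a Fourier transform of $M_\sigma$ up to reflection. The extra remarks (the explicit $\calF^2$-reflection bookkeeping and the alternative via Proposition~\ref{Euler_product}) are fine but not needed; the core argument matches the paper's.
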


\begin{proof}
The statement is clear for $M_\sigma(z)$ by the above theorem. Now, $\tilde{M}_\sigma(z)=\calF M_\sigma(-z).$ Since $\calF$ maps Schwartz functions to Schwartz functions the result follows.
\end{proof}

\begin{coro}
$$
\tilde{M}_\sigma(z_1,z_2)=\int_{\C} M_\sigma(w)\psi_{z_1, z_2}(w) |dw|.
$$
\end{coro}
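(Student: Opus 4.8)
The plan is to prove the identity first on the ``diagonal'' $z_2=\bar z_1$, where it is nothing but Fourier inversion, and then to propagate it to all of $\C^2$ by the identity theorem for holomorphic functions of two variables.

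First I would record the diagonal case. By definition $M_\sigma$ is the inverse Fourier transform of $\tilde M_\sigma(z)=\tilde M_\sigma(z,\bar z)$, and since by Corollary~\ref{M_is_Scwartz} both $M_\sigma$ and $\tilde M_\sigma$ belong to the Schwartz space, Fourier inversion applies and gives $\tilde M_\sigma=\calF M_\sigma$. Hence, for every $z\in\C$,
$$
\tilde M_\sigma(z,\bar z)=\tilde M_\sigma(z)=\calF M_\sigma(z)=\int_\C M_\sigma(w)\psi_z(w)\,|dw|=\int_\C M_\sigma(w)\psi_{z,\bar z}(w)\,|dw|,
$$
which is exactly the asserted identity in the case $z_2=\bar z_1$.

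Next I would check that both sides of the identity are entire functions of $(z_1,z_2)\in\C^2$. For the left-hand side this is Proposition~\ref{Euler_product}(i). For the right-hand side, for each fixed $w$ the integrand $M_\sigma(w)\psi_{z_1,z_2}(w)$ is entire in $(z_1,z_2)$, and writing $z_j=a_j+ib_j$ one computes $|\psi_{z_1,z_2}(w)|=\exp\!\big(-\tfrac12[(b_1+b_2)\Re w+(a_2-a_1)\Im w]\big)\leq\exp(C_R|w|)$ whenever $|z_1|,|z_2|\leq R$. Since $M_\sigma(w)=O_{\sigma,\lambda}(e^{-\lambda|w|^2})$ for every $\lambda>0$ by Proposition~\ref{M_Growth}, the integral converges absolutely and locally uniformly in $(z_1,z_2)$; hence, by differentiation under the integral sign (or by Morera's theorem together with Fubini), it defines an entire function of $(z_1,z_2)$.

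Finally I would invoke uniqueness. Let $G(z_1,z_2)$ denote the difference of the two sides; it is entire on $\C^2$ and, by the first step, vanishes on the totally real plane $\{(z,\bar z):z\in\C\}$. Putting $u=\tfrac{z_1+z_2}{2}$ and $v=\tfrac{z_1-z_2}{2i}$, so that $z_1=u+iv$ and $z_2=u-iv$, the function $\widetilde G(u,v):=G(u+iv,u-iv)$ is entire on $\C^2$ and vanishes for all $(u,v)\in\bbR^2$, which is precisely the locus $z_2=\bar z_1$. For fixed $v\in\bbR$ the map $u\mapsto\widetilde G(u,v)$ is holomorphic on $\C$ and vanishes on $\bbR$, hence is identically zero; thus $\widetilde G(u,v)=0$ for all $u\in\C$, $v\in\bbR$, and then for fixed $u\in\C$ the holomorphic function $v\mapsto\widetilde G(u,v)$ vanishes on $\bbR$ and so is identically zero. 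Therefore $G\equiv0$, which is the claim. The only point requiring any care is the local uniform convergence of the integral on the right, i.e. the fact that the super-exponential decay of $M_\sigma$ furnished by Proposition~\ref{M_Growth} dominates the exponential growth of $|\psi_{z_1,z_2}(w)|$ away from the diagonal; everything else is standard.
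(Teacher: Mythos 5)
Your proposal is correct and follows essentially the same route as the paper: both sides are entire in $(z_1,z_2)$ (the left by Proposition~\ref{Euler_product}, the right by the Gaussian decay of Proposition~\ref{M_Growth}), they agree on the totally real slice $z_2=\bar z_1$ (which is the Fourier inversion statement coming from Proposition~\ref{M_Derivability} and Corollary~\ref{M_is_Scwartz}), and uniqueness of analytic continuation from that slice gives the identity. The paper states this in one sentence; your version simply spells out the growth estimate on $\psi_{z_1,z_2}$ and the change of variables making the slice a real form of $\C^2$, which is the standard way to justify that last step.
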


\begin{proof}
Each side of the above equality is an entire function of $z_1, z_2$ (the left one  by Proposition \ref{Euler_product}, the right one by Proposition \ref{M_Growth}). These functions are equal when $z_2=\bar{z}_1$ by Proposition \ref{M_Derivability}, thus they must coincide for any $z_1,z_2\in \C.$
\end{proof}

\begin{remark}
The last corollary also follows from Theorem \ref{maindistrib}, however we prefer to give a direct proof.
\end{remark}

\subsection{Proof of Theorem \ref{maindistrib}}

We will apply Lemma A from \cite[\S 5]{IM3}, which is a general result that allows to deduce from the convergence of averages for a special class of functions $\Phi,$ the same fact for more general $\Phi.$

First of all, Corollaries \ref{M_is_not_bad} and \ref{M_is_Scwartz} imply that $M_\sigma$ is a good density function on $\bbR^2$ in the sense of Ihara and Matsumoto, that is, it is non-negative, real valued, continuous, with integral over $\bbR^2$ equal to $1$, and such that both the function and its Fourier transform belong to $L^1\cap L^\infty.$

By \ref{corochar} the identity \eqref{Ldensity} holds for any additive character $\psi_z$ of $\C.$ Lemma A implies then that \eqref{Ldensity} is true for any bounded continuous $\Phi$, for the characteristic function of any compact subset of $\bbR^2$ or of the complement of such a subset.

Now, let us take $\phi_0(r)=\exp(ar).$ Proposition \ref{M_Growth} implies that $\int_{\C} M_\sigma(z)\phi_0(|z|)|dz|$ converges. The same reasoning as in \cite[\S 5.3, Sublemma]{IM3} allows us to see that $\Avg_{\chi\in\Gamma_m} \exp(a|\Lg(f\otimes\chi, s)|)\ll 1.$ This concludes the proof of Theorem \ref{maindistrib}.

\section{Average on primitive forms}
\label{section_primitive}

While working with modular forms it is analytically more natural to consider harmonic averages instead of usual ones. One introduces the harmonic weight 
$$
\omega(f)=\frac{\Gamma(k-1)}{(4\pi)^{k-1}(f,f)_N},
$$ 
where 
$$
(f,f)_N=\int_{\Gamma_0(N)\backslash\calH} |f(z)|^2y^k\frac{dx\,dy}{y^2}
$$ 
is the Petersson scalar product, $\calH=\{z\in \C\mid \Im z>0\}.$ We denote by $\Avgh G(f)$ the harmonic average
$$
\Avgh G(f)=\sum_{f\in B_k(N)} \omega(f)G(f).
$$

It can be proven \cite[Corollary 2.10 for $m=n=1$]{ILS} that for squarefree $N$ we have
\begin{equation}
\label{sumomega}
\sum_{f\in B_k(N)}\omega(f)=\frac{\varphi(N)}{N}+O\left(\frac{\tau(N)^2\log (2N)}{N k^{5/6}}\right),
\end{equation}
thus $\Avgh$ is an average operator when $\frac{\varphi(N)}{N} \to 1.$  

One has the following interpretation of $\omega(f)$ via the symmetric square $L$-functions \cite[Lemma 2.5]{ILS}:
\begin{equation}
\label{weightL}
\omega(f)=\frac{2\pi^2}{(k-1)N L(\Sym^2 f,1)}.
\end{equation}


\begin{theo} 
\label{average_primitive}
Assume that $N$ is a prime number and that $k$ is fixed.
Let $0<\epsilon<\frac{1}{2}$ and $T,R>0.$ Let  $s=\sigma+it$ belong to the domain $\sigma\geq\epsilon+\frac{1}{2},$ $|t|\leq T,$ and $z$ and $z'$ to a disc $\calD_R.$ Then, assuming GRH for $L(f, s),$ for any $\delta>0$ we have 
$$
\Avgh(\overline{\ggo(f,s,z)}{\ggo(f,s,z')})-\sum_{n, m\in\bbN}n^{-\bar{s}}m^{-s}\sum_{\substack{x\in J(n)\cap J(m)\\ (nm, N)=1}}\overline{c_{z,x}(n)}c_{z',x}(m)\ll_{\epsilon,R,T, \delta,k} N^{-\epsilon/2+\delta},
$$
and 
$$
\lim_{N\to +\infty}\Avgh(\overline{\ggo(f,s,z)}\ggo(f,s,z'))=\sum_{n,m\in\bbN}n^{-\bar{s}}m^{-s}\sum_{x\in J(n)\cap J(m)}\overline{c_{z,x}(n)}c_{z',x}(m).
$$

The convergence of the series is on the right-hand sides is uniform and absolute in the above domains without the assumption of GRH.
\end{theo}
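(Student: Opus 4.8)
The plan is to follow the pattern of Theorem~\ref{averagechar}, with the harmonic Petersson formula of \cite{ILS} taking the place of the orthogonality of Dirichlet characters. Since the $\eta_f$ are real, expanding both $\ggo$'s into Dirichlet series and substituting $\lgo_z(n)=\sum_{x\in J_N(n)}c_{z,x}^N(n)\eta_f(x)$ (together with Lemma~\ref{coefficients_conjugate}) gives
\begin{equation*}
\overline{\ggo(f,s,z)}\,\ggo(f,s,z')=\sum_{n,m\geq1}n^{-\bar{s}}m^{-s}\sum_{x\in J_N(n)}\sum_{y\in J_N(m)}\overline{c_{z,x}^N(n)}\,c_{z',y}^N(m)\,\eta_f(x)\eta_f(y),
\end{equation*}
so that after applying $\Avgh$ everything is governed by $\Avgh\bigl(\eta_f(x)\eta_f(y)\bigr)$. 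Since this double series is not uniformly absolutely convergent in $f$, the first step must be a truncation.

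Under GRH, I would replace $\ggo(f,s,z)$ by a short smoothed Dirichlet polynomial: for $0<\delta<\sigma-\tfrac12=\epsilon$ and a parameter $X$, writing $\sum_n\lgo_z(n)e^{-n/X}n^{-s}=\frac{1}{2\pi i}\int_{(2)}\Gamma(w)X^w\ggo(f,s+w,z)\,dw$ and shifting the contour to $\Re w=-\delta$ (legitimate because $\ggo(f,\cdot,z)$ is holomorphic for $\Re s>\tfrac12$ under GRH, with growth on $\Re w=-\delta$ controlled by Lemma~\ref{estimateL} and \cite[Theorems 5.17 and 5.19]{IK}; the decisive point is that the relevant exponent $2-2(\sigma-\delta)=1-2(\epsilon-\delta)$ stays below $1$, so the shifted integral is $\ll_{\epsilon,R,T,k,\delta}N^{\delta'}$ for every $\delta'>0$). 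Picking up the pole of $\Gamma$ at $w=0$ yields, uniformly for $f\in B_k(N)$,
\begin{equation*}
\ggo(f,s,z)=\sum_{n\geq1}\lgo_z(n)e^{-n/X}n^{-s}+O_{\epsilon,R,T,k,\delta}\bigl(X^{-\delta}N^{\delta'}\bigr).
\end{equation*}
Since $|\ggo(f,s,z)|\ll_{\delta'}N^{\delta'}$ under GRH by Lemma~\ref{estimateL}, the smoothed sum is $\ll N^{\delta'}$ as well, and — using $\sum_f\omega(f)\ll1$ from \eqref{sumomega} — the contribution of the truncation error to $\Avgh\bigl(\overline{\ggo}\,\ggo\bigr)$ is $\ll X^{-\delta}N^{\delta'}$.

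Next I would expand the truncated product and apply the harmonic Petersson formula for newforms of prime level \cite[Corollary 2.10]{ILS}, which for $(xy,N)=1$ gives $\Avgh\bigl(\eta_f(x)\eta_f(y)\bigr)=\delta_{x=y}+E_{k,N}(x,y)$, where, in the range $\sqrt{xy}\leq N$, one has $E_{k,N}(x,y)\ll_k (xy)^{(k-1)/2}N^{-k+1/2+\delta'}$ by Weil's bound for Kloosterman sums and $|J_{k-1}(t)|\ll_k t^{k-1}$ for bounded $t$. Choosing $X$ to be a fixed power $N^{\theta}$ with $\theta<1$ ensures that the $x,y$ that matter (effectively $x\leq n\ll X^{1+\delta'}$, and likewise $y\leq m$, because of the weight $e^{-n/X}$) satisfy $\sqrt{xy}\leq N$ and $(xy,N)=1$, and that $J_N(n)=J(n)$, $c_{z,x}^N(n)=c_{z,x}(n)$ for all surviving terms. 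The diagonal $x=y$ then reproduces the sum in the statement up to removing the smooth weights and the tail $n>X$ or $m>X$, which costs $O(X^{1/2-\sigma+\delta'})$; here the essential observation is that $x\in J(n)\cap J(m)$ forces $nm$ to be a perfect square, and this is exactly what makes both this series and the one in the statement converge for $\sigma>\tfrac12$ — writing $n=ab^2$ with $a$ squarefree, $m$ ranges over $a$ times a square, and $\sum_{a,b}a^{-2\sigma+\delta'}b^{-2\sigma+\delta'}<\infty$; combined with $|c_{z,x}(n)|,|J(n)|\ll n^{\delta'}$ from Lemma~\ref{MainEstimate} this also yields the unconditional uniform absolute convergence claimed at the end of the theorem. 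The off-diagonal $x\neq y$, bounded via $E_{k,N}$ and Lemma~\ref{MainEstimate}, contributes $O_k\bigl(X^{k+1-2\sigma+\delta'}N^{-k+1/2+\delta'}\bigr)$.

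It then remains to balance the two genuine errors $X^{-\delta}N^{\delta'}$ (truncation) and $X^{k+1-2\sigma+\delta'}N^{-k+1/2+\delta'}$ (off-diagonal): taking $\delta$ just below $\epsilon$ and $X=N^{\theta}$ with $\theta=(k-\tfrac12)/(k+\epsilon)$ gives a total error $\ll_{\epsilon,R,T,\delta,k}N^{-\epsilon/2+\delta}$ (indeed a slightly better power of $N$ when $k\geq2$), which is the first assertion. The limit formula then follows immediately: for each fixed $(n,m)$ one has $(nm,N)=1$ for $N$ large, so $J_N=J$, $c^N=c$, and by the absolute and uniform convergence of the limiting series one may interchange the limit with the summation. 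I expect the main difficulty to be precisely this choice of $X$: it must be taken large for the GRH truncation error to decay, yet the accumulated Kloosterman contribution summed over all $x\leq n$, $y\leq m$ forces $X$ to stay below roughly $N^{1-1/(2k)}$, and it is here that the weight $k$ being fixed enters in an essential way — in contrast with the twisted case of Theorem~\ref{averagechar}, where for conductors exceeding the truncation length there is no off-diagonal at all.
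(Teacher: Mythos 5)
Your proposal follows the same route as the paper: a Mellin truncation $\ggo=\ggo_{+}-\ggo_{-}$ (the $\ggo_{-}$ part controlled via GRH as in Lemma~\ref{estimateL}), then Petersson to handle $\Avgh(\overline{\ggo}_{+}\ggo'_{+})$, a diagonal/off-diagonal split, and a choice of $X$ as a power of $N$ to balance the GRH truncation error against the Kloosterman error. The paper's \S\ref{naive}--\S\ref{avproof} are essentially your argument in a more modular form.

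The small variations are all harmless but worth naming. The paper quotes \cite[Cor.~2.10]{ILS} as a black box, giving $\Delta(x,y)\ll k^{-5/6}(xy)^{1/4}N^{-1}\tau(N)^2\tau_3((x,y))\log(2xyN)$, whereas you re-derive the off-diagonal bound from $J_{k-1}(t)\ll_{k}t^{k-1}$ for $t\ll 1$ plus Weil; this is the same source, but your version is not $k$-uniform (irrelevant here since $k$ is fixed) and requires $\sqrt{xy}\leq N$, which is why you must first show that $x,y\geq N$ contributes negligibly. The paper instead keeps all $x,y$, which forces an explicit extra term $\Delta'(s,X)$ for $(xy,N)\neq 1$ bounded trivially via \eqref{Petersonbad}; your observation that those terms only arise with $n\geq N\gg X\log N$ and hence are killed by $e^{-n/X}$ is a legitimate shortcut. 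You also avoid the Cauchy--Schwarz step \eqref{minusest1} by invoking the pointwise GRH bound $|\ggo|\ll N^{\delta'}$ on both factors; the paper does Cauchy--Schwarz so that only $\Avgh|\ggo_{-}|^{2}$ needs a GRH estimate, with $\Avgh|\ggo_{+}|^{2}\ll 1$ coming from Petersson — a minor efficiency. Finally, the paper simply takes $X=N^{1/2}$, while you optimize to $\theta=(k-1/2)/(k+\epsilon)$; both satisfy the binding constraint $\theta\geq 1/2$ coming from the truncation/tail removal (for $k\geq 2$), which is what fixes the $N^{-\epsilon/2+\delta}$ exponent. Your observation that $x\in J(n)\cap J(m)$ forces $nm$ to be a square, used to prove the unconditional absolute convergence, is precisely the $r_{\pm}(n)$ computation around \eqref{rnotation}.
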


\begin{remark}
In contrast to the situation, considered in Theorem \ref{averagechar}, we see that the average depends both on $\Re s$ and $\Im s.$ In fact, the independence of $\Im s$ in the case of averages with respect to characters is the corollary of the invariance of Haar measures on $\C^1$ under rotations.
\end{remark}

\begin{coro} Under the conditions of the previous theorem we have
$$
\lim_{N\to+\infty}\Avgh \psi_{z_{1},z_{2}}(\Lg(f,s))=\tilde{M}^h_{s}(z_{1},z_{2})=\sum_{n,m\in\bbN}n^{-\bar{s}}m^{-s}\sum_{x\in J(n)\cap J(m)}c_{z_1,x}(n)c_{z_2,x}(m).
$$
\end{coro}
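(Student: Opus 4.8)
The plan is to deduce this corollary from Theorem~\ref{average_primitive} in exactly the way Corollary~\ref{corochar} is deduced from Theorem~\ref{averagechar}: no new analytic input is required, all the substance being contained in Theorem~\ref{average_primitive} itself. First I would record the pointwise identity, valid for each $f\in B_k(N)$: from $\ggo(f,s,z)=\exp\bigl(\tfrac{iz}{2}\Lg(f,s)\bigr)$ and the definition $\psi_{z_1,z_2}(w)=\exp\bigl(\tfrac{i}{2}(z_1\overline{w}+z_2w)\bigr)$ one gets
$$
\psi_{z_1,z_2}(\Lg(f,s))=\exp\bigl(\tfrac{i}{2}z_1\overline{\Lg(f,s)}\bigr)\,\exp\bigl(\tfrac{i}{2}z_2\Lg(f,s)\bigr)=\overline{\ggo(f,s,-\bar{z}_1)}\;\ggo(f,s,z_2),
$$
the first factor being rewritten through $\overline{\ggo(f,s,-\bar{z}_1)}=\overline{\exp\bigl(\tfrac{i(-\bar{z}_1)}{2}\Lg(f,s)\bigr)}=\exp\bigl(\tfrac{i}{2}z_1\overline{\Lg(f,s)}\bigr)$. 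This is precisely the computation used in the proof of Corollary~\ref{corochar}.

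Since $|-\bar{z}_1|=|z_1|\le R$, Theorem~\ref{average_primitive} applies with $(z,z')=(-\bar{z}_1,z_2)$, and combining its limit formula with the displayed termwise identity gives
$$
\lim_{N\to+\infty}\Avgh\psi_{z_1,z_2}(\Lg(f,s))=\sum_{n,m\in\bbN}n^{-\bar{s}}m^{-s}\sum_{x\in J(n)\cap J(m)}\overline{c_{-\bar{z}_1,x}(n)}\,c_{z_2,x}(m),
$$
the series on the right being absolutely and uniformly convergent on the stated domain by the final assertion of Theorem~\ref{average_primitive} (so that $\tilde{M}^h_s(z_1,z_2)$ is indeed well-defined).

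It remains to clear the conjugation by means of Lemma~\ref{coefficients_conjugate}, which at level $N=1$ reads $\overline{c_{z,x}(n)}=c_{-\bar{z},x}(n)$; taking $z=-\bar{z}_1$ gives $\overline{c_{-\bar{z}_1,x}(n)}=c_{z_1,x}(n)$. Substituting turns the right-hand side into $\sum_{n,m\in\bbN}n^{-\bar{s}}m^{-s}\sum_{x\in J(n)\cap J(m)}c_{z_1,x}(n)\,c_{z_2,x}(m)=\tilde{M}^h_s(z_1,z_2)$, which is the assertion. There is essentially no obstacle here: the only things to check are the conjugation bookkeeping in the pointwise identity and the trivial observation that $z_1\mapsto-\bar{z}_1$ preserves $\calD_R$. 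The genuine difficulty is upstream, in the proof of Theorem~\ref{average_primitive} --- namely truncating $\ggo(f,s,z)$ to a Dirichlet polynomial under GRH and controlling the off-diagonal terms of the Petersson formula against the length of that truncation.
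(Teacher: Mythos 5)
Your proof is correct and is essentially identical to the paper's: both rewrite $\psi_{z_1,z_2}(\Lg(f,s))$ as $\overline{\ggo(f,s,-\bar z_1)}\,\ggo(f,s,z_2)$, apply Theorem~\ref{average_primitive} with $(z,z')=(-\bar z_1,z_2)$, and then clear the conjugate via Lemma~\ref{coefficients_conjugate}. The only difference is that you spell out the conjugation bookkeeping in slightly more detail; there is no gap.
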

\begin{proof}  We have 
\[
\psi_{z_{1},z_{2}}(\Lg(f,s))=\overline{\ggo(f,s,-\overline{z}_{1})} {\ggo(f,s,z_{2})},
\]
so
$$
\lim_{N\to+\infty}\Avgh \psi_{z_{1},z_{2}}(\Lg(f,s))=
\sum_{n,m\in\bbN}n^{-\bar{s}}m^{-s}\sum_{x\in J(n)\cap J(m)}\overline{c_{-\bar{z}_1,x}(n)}c_{z_2,x}(m).
$$
The corollary follows from the equality $\overline{c_{-\bar{z}, x}(n)}=c_{z, x}(n)$ which is implied by Lemma \ref{coefficients_conjugate}.
\end{proof}

\subsection{Naive approach}
\label{naive}

In this subsection we try to estimate the average in a naive way via Euler products. This approach works for $\Re s$ large enough and gives a formula which turns out to be valid for more general $s.$ The intermediate calculations will  be used again in \S \ref{avproof}. All the estimates are written assuming only that $N$ is squarefree and not assuming that $k$ is fixed until the very end of \S\ref{avproof}.

We have 
$$
\Avgh (\overline{\ggo(f,s,z)}\ggo(f,s,z'))= \sum_{f\in B_k(N)}\omega(f)\sum_{n,m\geq 1}n^{-\bar{s}}m^{-s}\overline{\lgo_{z}(n)}\lgo_{z'}(m).
$$

Let $\tau_k(n)=|\{(d_1,\dots,d_k)\in \bbN^k\mid d_1\cdot\dots\cdot d_k=n\}|$. We will use a version of the Petersson formula proven in \cite[Corollary 2.10]{ILS}. Note that our weights are slightly different from those used in \cite{ILS}, we follow instead \cite{RW} in our normalization.

\begin{prop}
\label{Peterson}
If $N$ is squarefree, $(m,N)=1,$ $(n,N^2)\mid N,$ then 
$$
S(m, n)=\sum_{f\in B_{k}(N)}\omega(f)\eta_{f}(m)\eta_{f}(n)=\frac{\varphi(N)}{N}\delta(m,n)+\Delta(m,n),
$$ 
where  $\delta(m,n)$ is the Kronecker symbol and
$$
\Delta(m,n) = O\left(k^{-\frac{5}{6}}(mn)^{\frac{1}{4}}N^{-1}(n, N)^{-1/2}\tau(N)^2\tau_3((m,n))\log(2mnN)\right),
$$
the implied constant being absolute.
\end{prop}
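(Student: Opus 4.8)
The strategy is to derive Proposition~\ref{Peterson} by transcribing the newform Petersson formula of Iwaniec, Luo and Sarnak \cite[Corollary 2.10]{ILS} into the normalisation used here. First I would recall \cite[Corollary 2.10]{ILS}: for $N$ squarefree and for indices in precisely the range $(m,N)=1$, $(n,N^2)\mid N$ — the range in which the oldform contributions at the divisor levels $M\mid N$ do not disturb the diagonal — the harmonic sum over the newform basis of $\eta_f(m)\eta_f(n)$ is a diagonal term plus an error term that is a sum of Kloosterman sums weighted by Bessel functions and is bounded with an absolute implied constant. Two normalisation issues must then be dealt with. The Fourier coefficients cause none: both \cite{ILS} and \cite{RW} take $\eta_f(n)$ to be the Hecke eigenvalue, so that Deligne's bound is $|\eta_f(n)|\le\tau(n)$ for $(n,N)=1$. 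The harmonic weights $\omega(f)=\Gamma(k-1)(4\pi)^{1-k}(f,f)_N^{-1}$ are those of \cite{RW}, which differ from \cite{ILS}'s at most by the elementary scalar $\varphi(N)/N$ attached to the diagonal (arising from the index $[\mathrm{SL}_2(\Z):\Gamma_0(N)]$ and the M\"obius inversion over divisors in the newform sieve); this is exactly why the main term is $\frac{\varphi(N)}{N}\delta(m,n)$ rather than $\delta(m,n)$, and specialising $m=n=1$ recovers \eqref{sumomega}, which is a reassuring check. Since $\varphi(N)/N\le 1$, passing between the two normalisations does not degrade the error term or the absoluteness of its constant.

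It then remains to verify that the error term of \cite[Corollary 2.10]{ILS}, written out in the present notation, is $O\bigl(k^{-5/6}(mn)^{1/4}N^{-1}(n,N)^{-1/2}\tau(N)^2\tau_3((m,n))\log(2mnN)\bigr)$. This is bookkeeping on the standard ingredients of \cite[\S2]{ILS}: the factor $k^{-5/6}$ comes from the uniform Bessel estimate $J_{k-1}(x)\ll\min\{x^{k-1}/\Gamma(k),\,x^{-1/2}\}$ after optimising the cut in the sum over moduli; the factor $(mn)^{1/4}N^{-1}$ comes from the constraint $N\mid c$ together with Weil's bound $|S(m,n;c)|\le\tau(c)(m,n,c)^{1/2}c^{1/2}$; the factor $(n,N)^{-1/2}$ together with one power of $\tau(N)$ comes from the level-lowering shifts $\eta_f(m\ell^{2})$ and the sum over the divisor decompositions $LM=N$ in the newform sieve; and the remaining $\tau(N)$ and $\tau_3((m,n))$ absorb the divisor functions produced by the Hecke relation for $\eta_f(m)\eta_f(n)$ and by summing the $(m,n,c)$-dependence of the Weil bound over the admissible moduli $c$.

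The substantive difficulty is accordingly not analytic: it lies in matching \cite{ILS}'s conventions for the Petersson measure, the newform basis and the harmonic weight against those of \cite{RW}, keeping track of exactly which coprimality hypotheses are needed for the oldform terms to contribute only the scalar $\varphi(N)/N$, and confirming that the implied constant can genuinely be taken absolute throughout. Once this bookkeeping is done the proposition is immediate.
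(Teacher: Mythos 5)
Your approach coincides with the paper's: Proposition~\ref{Peterson} is stated with no proof at all in the text --- the paragraph preceding it simply says ``We will use a version of the Petersson formula proven in \cite[Corollary 2.10]{ILS}'' followed by a one-sentence remark about weight normalization --- so, as you rightly observe, the content of any ``proof'' is bookkeeping between the conventions of \cite{ILS} and those of \cite{RW}.

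One detail in your bookkeeping is likely misplaced. You attribute the factor $\varphi(N)/N$ in the main term to a normalization discrepancy between the harmonic weights of \cite{RW} and those of \cite{ILS}, asserting that ILS's own corollary would read $\delta(m,n)$ and that the $\varphi(N)/N$ enters upon translating weights. In fact \cite[Corollary 2.10]{ILS} already has $\frac{\varphi(N)}{N}\delta(m,n)$ as its main term in ILS's own notation and with their harmonic weight $\Gamma(k-1)(4\pi)^{1-k}\|f\|^{-2}$, which has the same shape as the $\omega(f)$ used here. The factor is intrinsic to the newform Petersson formula: it comes from the M\"obius inversion over divisors $LM=N$ that converts the full-space diagonal $\delta(m,n)$ of $\Delta_{k,N}$ into the newform-restricted sum $\Delta^*_{k,N}$. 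You do gesture at this mechanism in your parenthetical (``the M\"obius inversion over divisors in the newform sieve''), but then conflate it with a rescaling of the weights; it is not one. Whatever discrepancy of convention the authors had in mind when writing ``our weights are slightly different'' must be an overall scalar applied uniformly, not something confined to the diagonal. Apart from this conflation, your account of the error term's provenance --- Bessel decay in $k$ contributing $k^{-5/6}$, the Weil bound with $N\mid c$ contributing $(mn)^{1/4}N^{-1}$, and the divisor factors $\tau(N)^2\tau_3((m,n))$ from the newform sieve and Hecke multiplicativity --- is a fair summary of what happens inside \cite[\S 2]{ILS}, none of which the paper actually reproduces.
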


The conditions of this proposition are in particular satisfied once $(nm, N)=1.$ We will also use the following trivial bound, when $(m, N)\neq 1:$
\begin{equation}
\label{Petersonbad}
|S(m,n)| \leq  \sum_{f\in B_{k}(N)}\omega(f) \frac{\tau(m)\tau(n)}{\sqrt{(m, N)}} 
=\left(\frac{\varphi(N)}{N}+O\left(\frac{\tau(N)^2\log (2N)}{N k^{5/6}}\right)\right)\frac{\tau(m)\tau(n)}{\sqrt{(m, N)}},
\end{equation}
which holds by virtue of \eqref{sumomega} and the fact that $\ds|\eta_f(m)| \leq \frac{\tau(m)}{\sqrt{(m,N)}}$ since $N$ is squarefree. Obviously, the corresponding bound is also true if we assume instead that $(n, N)\neq 1.$

\begin{remark}
In what follows, one can possibly soften our restrictions on $N$ (in particular, remove the assumption that $N\to\infty$) by using more elaborate bounds on the sums in the case when $(mn, N)\neq 1$, applying directly the construction of an explicit basis of $S_k(N)$ from $B_k(N),$ in a way similar to \cite[Proposition 2.6]{ILS}.
\end{remark}

Using the above estimates, we can write  
\begin{align*}
\Avgh (\overline{\ggo(f,s,z)}&\ggo(f,s,z'))=\sum_{f\in B_k(N)}\omega(f)\sum_{n,m}n^{-\bar{s}}m^{-s}\overline{\lgo_{z}(n)}\lgo_{z'}(m)\\
&= \sum_{n,m}n^{-\bar{s}}m^{-s}\sum_{f\in B_k(N)}\omega(f)\overline{\lgo_{z}(n)}\lgo_{z'}(m)\\
&=  \sum_{n,m}n^{-\bar{s}}m^{-s}\sum_{x\in J_N(n), y\in J_N(m)}\overline{c^N_{z, x}(n)}c^N_{z',y}(m)\sum_{f\in B_k(N)}\omega(f)\eta_{f}(x)\eta_{f}(y)\\
&=  \sum_{n,m}n^{-\bar{s}}m^{-s}\sum_{\substack{x \in J_N(n),y\in J_N(m)\\(xy, N)=1}}\overline{c^N_{z, x}(n)}c^N_{z',y}(m)\left(\delta(x,y)\frac{\varphi(N)}{N}+\Delta(x,y)\right)\\ 
&+\sum_{n,m}n^{-\bar{s}}m^{-s}\sum_{\substack{x\in J_N(n), y\in J_N(m)\\(xy,N)\neq 1}}\overline{c^N_{z, x}(n)}c^N_{z',y}(m)\sum_{f\in B_k(N)}\omega(f)\eta_{f}(x)\eta_{f}(y)\\
&= \frac{\varphi(N)}{N}\sum_{n,m}n^{-\bar{s}}m^{-s}\sum_{\substack{x\in J_N(n)\cap J_N(m)\\ (x, N)=1}}\overline{c^N_{z,x}(n)}c^N_{z',x}(m)\\
&+ \sum_{n,m}n^{-\bar{s}}m^{-s} \sum_{\substack{x \in J_N(n),y\in J_N(m)\\(xy, N)=1}}\overline{c^N_{z,x}(n)}c^N_{z',y}(m)\Delta(x,y)\\
&+\sum_{n,m}n^{-\bar{s}}m^{-s}\sum_{\substack{x\in J_N(n), y\in J_N(m)\\(xy,N)\neq 1}}\overline{c^N_{z, x}(n)}c^N_{z',y}(m)S(x, y).
\end{align*}
The fact that the sum can be subdivided into three parts will be justified by the absolute convergence of the series for $\Re s$ large enough.

Put $\ds\tilde{M}(s)=\sum_{n,m}n^{-\bar{s}}m^{-s}\sum_{\substack{x\in J_N(n)\cap J_N(m)\\ (x, N)=1}}\overline{c^N_{z, x}(n)}c^N_{z',x}(m).$ Let us first note that the sum does not depend on $N,$ since $c^N_{z, x}(n)=c_{z, x}(n)$ if $(n, N)=1,$ and the coefficient $c^N_{z,x}(n)$ vanishes, once we have both $(x, N)=1,$ and $(n, N)\neq 1.$ This allow us to write $\ds\tilde{M}(s)=\sum_{n,m}n^{-\bar{s}}m^{-s}\sum_{\substack{x\in J(n)\cap J(m)\\ (nm, N)=1}}\overline{c_{z, x}(n)}c_{z',x}(m).$

Our goal is to verify that $\tilde{M}(s)$ gives the principal term of the asymptotic behaviour of $m(s, z, z').$  If $m\notin I(n),$ which is equivalent to $I(m)\neq I(n),$ the term $\overline{c_{z,x}(n)}c_{z', x}(m)$ vanishes. Therefore,
$$
\tilde{M}(s)=\sum_{\substack{n\in\bbN,\ (nm, N)=1\\ m\in I(n)}}n^{-\bar{s}}m^{-s}\sum_{\substack{x\in J(n)\cap J(m)}}\overline{c_{z,x}(n)}c_{z',x}(m).
$$

Let us define $r_{-}(n)$ to be the largest square dividing $n,$ and $r_{+}(n)$ to be the least square divisible by $n.$ So, if $n=p_{1}^{k_{1}}\dots p_{l}^{k_{l}},$ we have 
$$
n=p_{1}^{k_{1} \mod 2}\dots p_{l}^{k_{l} \mod 2} r_-(n)^2, \quad r_+(n)^2=p_{1}^{k_{1} \mod 2}\dots p_{l}^{k_{l} \mod 2} n,
$$ 
and the squarefree part of $n$ is equal to
$$
\ds p_{1}^{k_{1} \mod 2}\dots p_{l}^{k_{l} \mod 2}=\frac{r_+(n)}{r_-(n)}.
$$
Using this notation, we can write for $s=\sigma+it$
\begin{multline}
\label{rnotation}
\tilde{M}(s) = \sum_{n\geq 1}\sum_{r\geq 1} n^{-\sigma+it}\left(\frac {r_+(n)}{r_-(n)}\right)^{-\sigma-it} r^{-2s}\sum_{\substack{x\in J(n)\cap J(m)\\ (mn, N)=1}}c_{z,x}(n)c_{z',x}\left(\frac{r_+(n) r^2}{r_{-}(n)}\right)\\
= \sum_{n, r\geq 1} r_+(n)^{-2\sigma} r_-(n)^{2it} r^{-2s}\sum_{\substack{x\in J(n)\\ (mn, N)=1}} c_{z,x}(n)c_{z',x}\left(\frac{r_+(n) r^2}{r_{-}(n)}\right),
\end{multline}
so
\begin{equation*}
|\tilde{M}(s)|\leq \sum_{n,r\geq 1}(r_+(n))^{-2\sigma}r^{-2\sigma}\sum_{\substack{x\in J(n)\\ (mn, N)=1}}|c_{z,x}(n)|\left|c_{z',x}\left(\frac{r_+(n) r^2}{r_{-}(n)}\right)\right|.
\end{equation*}

There are $2^{\omega(n)}-1=2^l-1$ different $n$ giving the same $r_+(n).$ As $\omega(n)\ll \frac{\log n}{2+\log\log n}$ by \eqref{boundomega}, so $2^l\ll_\epsilon n^\epsilon,$ using Lemma \ref{MainEstimate} and \eqref{sizeJ} we see that the the sum $\tilde{M}(s)$ converges absolutely for $\Re s>1/2:$
$$
|\tilde{M}(s)|\ll_\epsilon \sum_{n, r \geq 1} n^\epsilon \cdot n^{-2\sigma} \cdot r^{-2\sigma} \cdot n^\epsilon \cdot n^\epsilon \cdot r^{-2\epsilon}\cdot n^\epsilon = \sum_{n\geq 1}n^{-2\sigma+4\epsilon} \sum_{r\geq 1} r^{-2\sigma + 2\epsilon}.
$$

Let us now see what happens with the error term. If we put 
$$
\Delta(s)=\sum_{n,m \geq 1}n^{-\bar{s}}m^{-s}\sum_{\substack{x\in J_N(n), y\in J_N(m)\\(xy,N)= 1}}\overline{c^N_{z,x}(n)}c^N_{z', y}(m) \Delta(x,y),
$$ 
from the Proposition \ref{Peterson} together with the estimate $\tau_3(n)\leq \tau(n)^3 \ll_\epsilon n^\epsilon$, and Lemma \ref{MainEstimate} we conclude that 
$$
|\Delta(s)|\ll_\epsilon \frac{\tau(N)^2\log N}{Nk^{5/6}} \sum_{m,n\geq 1} (mn)^{-\sigma+\frac{1}{4}+\epsilon}.
$$ 

In a similar way, putting 
$$
\Delta'(s)=\sum_{n,m \geq 1}n^{-\bar{s}}m^{-s}\sum_{\substack{x\in J_N(n), y\in J_N(m)\\(xy,N)\neq 1}}\overline{c^N_{z, x}(n)}c^N_{z',y}(m)S(x, y),
$$  
we get
$$
|\Delta'(s)|\ll_\epsilon \frac{1}{\sqrt{p_{\min}(N)}}\left(\frac{\varphi(N)}{N}+O\left(\frac{\tau(N)^2\log (2N)}{N k^{5/6}}\right)\right) \sum_{m,n\geq 1} (mn)^{-\sigma+\epsilon},
$$ 
where $p_{\min}(N)$ is the least prime factor of $N.$

These bounds only make sense for $\sigma=\Re s> 5/4,$ when the series converge. For these values of $s$ we conclude that the error terms tend to $0,$ once $p_{\min}(N)\to \infty$ (recall that we assume $N$ to be squarefree). In the next section we are going to show how the estimates can be pushed to the left from $\Re s > 5/4.$

\subsection{Integral representation}
\label{integralrep}

We introduce the following notation. Let $0<\epsilon'<\epsilon<\frac{1}{2},$ $s\in \C$ with $\sigma=\Re s \geq \frac{1}{2}+\epsilon,$ $c>\max(0,1-\sigma),$ $X\geq 1$ a parameter to be specified later. The symbol $\ll$ will depend on $\epsilon, R,$ and $T$ but this dependence will not be explicitly indicated. As before, we assume only that $N$ is squarefree (and not necessarily prime), and we do not suppose $k$ to be fixed. We will write $\ggo$ to denote $\ggo(f, s, z)$ when no ambiguity is possible. 

We use the techniques from \cite{IM3}, though it would be possible to employ the approximate functional equations instead, since they are available in our case. First, we establish the analogues of the propositions proven in \cite[\S2.2]{IM3}. 

\begin{lemma}
\begin{enumerate}
\item For $\ds\Re s \geq \frac{1}{2}+\epsilon$ we have $\ggo=\ggo_{+}- \ggo_{-},$ where the holomorphic functions $\ggo_+$ and $\ggo_-$ are defined by
$$
\ggo_{+}(f,s,z,X)=\frac{1}{2\pi i}\int_{\Re w=c}\Gamma(w)\ggo(f,s+w,z)X^w dw,
$$ 
and
$$
\ggo_{-}(f,s,z,X)=\frac{1}{2\pi i}\int_{\Re w=\epsilon'-\epsilon}\Gamma(w)\ggo(f,s+w,z)X^w dw.
$$ 
\item The function $\ggo_+$ has a Dirichlet series expansion
$$
\ggo_+=\sum_{n=1}^\infty\lgo_{z}(n) e^{-\frac{n}{X}}n^{-s}
$$
which is absolutely and uniformly convergent on compacts in $\C.$
\end{enumerate}
\end{lemma}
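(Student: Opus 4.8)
The plan is to establish part (ii) first --- it is essentially a Mellin-transform identity --- and then to deduce part (i) from it by a contour shift.

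For part (ii), I would start from Mellin inversion for the exponential function. Since $\Gamma(w)=\int_0^\infty e^{-t}t^{w-1}\,dt$, for any $c>0$ and $y>0$ one has $e^{-y}=\frac{1}{2\pi i}\int_{\Re w=c}\Gamma(w)\,y^{-w}\,dw$; with $y=n/X$ this gives $e^{-n/X}=\frac{1}{2\pi i}\int_{\Re w=c}\Gamma(w)\,(X/n)^w\,dw$. On the line $\Re w=c$, where $c>\max(0,1-\sigma)$, we have $\Re(s+w)=\sigma+c>1$, so $\ggo(f,s+w,z)=\sum_{n\ge1}\lgo_z(n)n^{-s-w}$ converges absolutely, the coefficients satisfying $|\lgo_z(n)|\ll_{\epsilon,R}n^\epsilon$ by Lemma \ref{MainEstimate}; meanwhile $|\Gamma(w)|$ decays exponentially in $|\Im w|$ by Stirling's formula. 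Thus Fubini's theorem allows the interchange of sum and integral, which yields $\ggo_+=\sum_{n\ge1}\lgo_z(n)e^{-n/X}n^{-s}$. For $s$ in a compact subset of $\C$ the bound $|\lgo_z(n)e^{-n/X}n^{-s}|\ll_{\epsilon,R}n^{\epsilon-\Re s}e^{-n/X}$ together with the rapid decay of $e^{-n/X}$ gives absolute and uniform convergence, hence holomorphy of $\ggo_+$.

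For part (i), I would shift the contour in the definition of $\ggo_+$ from $\Re w=c$ to $\Re w=\epsilon'-\epsilon$. In the closed strip $\epsilon'-\epsilon\le\Re w\le c$ the integrand $\Gamma(w)\,\ggo(f,s+w,z)\,X^w$ is meromorphic with a single pole, namely the simple pole of $\Gamma$ at $w=0$: the poles $w=-1,-2,\dots$ of $\Gamma$ lie strictly to the left of $\Re w=\epsilon'-\epsilon$ since $\epsilon<\tfrac12$; the factor $\ggo(f,s+w,z)$ is holomorphic throughout the strip, because $\Re(s+w)\ge\tfrac12+\epsilon'>\tfrac12$ there and, under GRH, $L(f,\cdot)$ has no zeros in $\Re>\tfrac12$; and $X^w$ is entire. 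The residue at $w=0$ equals $\ggo(f,s,z)=\ggo$. Applying the residue theorem to the rectangle with vertices $c\pm iU$ and $(\epsilon'-\epsilon)\pm iU$ and letting $U\to\infty$ then gives $\ggo=\ggo_+-\ggo_-$, provided the two horizontal segments contribute nothing in the limit. Holomorphy of $\ggo_-$ as a function of $s$ follows in the same way, by differentiating under the integral sign with the same majorant.

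The main obstacle is precisely this last point: one must bound $\ggo(f,s+w,z)$ sub-exponentially in $|\Im w|$, uniformly for $\tfrac12+\epsilon'\le\Re(s+w)\le\sigma+c$, so that its product with $\Gamma(w)$ decays along horizontal lines. This is exactly where GRH for $L(f,s)$ is needed: arguing as in the proof of Lemma \ref{estimateL} (with $N$ and $k$ fixed and absorbed into the implied constants) one obtains $\log|\ggo(f,s+w,z)|\ll_{\epsilon',R}\ell(\Im(s+w))^{1-2\epsilon'}$, which grows far more slowly than any linear function of $|\Im w|$; since $|\Gamma(w)|=e^{-\frac{\pi}{2}|\Im w|+O(\log|\Im w|)}$ by Stirling, the integrand decays exponentially on horizontal lines, so the segments vanish as $U\to\infty$ and all the interchanges made above are legitimate.
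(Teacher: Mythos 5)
Your proof is correct and follows essentially the same route as the paper's. The paper disposes of part (i) by citing the corresponding argument in Ihara--Matsumoto \cite[Proposition 2.2.1]{IM3}, with their hypothesis (A3) replaced by Lemma \ref{estimateL}; you simply unfold what that citation contains (shift of contour from $\Re w=c$ to $\Re w=\epsilon'-\epsilon$, residue $\ggo(f,s,z)$ picked up at $w=0$, vanishing of the horizontal segments via Stirling combined with the sub-exponential GRH bound on $\log|\ggo|$ supplied by Lemma \ref{estimateL}). Part (ii) is the same Mellin-inversion-plus-Fubini computation used in the paper, with the final convergence bound coming from Lemma \ref{MainEstimate} exactly as you describe. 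The only cosmetic nitpick is your parenthetical reason that the poles $w=-1,-2,\dots$ lie left of $\Re w=\epsilon'-\epsilon$: the operative inequality is $\epsilon'-\epsilon>-\epsilon>-1$ (using $\epsilon'>0$), not directly $\epsilon<\tfrac12$, but the conclusion is the same.
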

\begin{proof} The first statement admits exactly the same proof as the corresponding part of \cite[Proposition 2.2.1]{IM3} with Ihara and Matsumoto's property (A3) being replaced by Lemma \ref{estimateL} in our case.

As for the second statement, we have the Dirichlet series expansion 
$$
\ds\ggo(f,s,z)=\sum_{n=1}^\infty \lgo_z(n)n^{-s}.
$$ 
Taking in account that $\sigma+c>1,$ we see that
$$
\ggo(f,s+w,z)=\sum_{n=1}^\infty \lgo_z(n)n^{-s-w}
$$
is absolutely and uniformly convergent with respect to $\Im w$ on $\Re w=c.$ Exchanging the integration and summation and using
$$
\frac{1}{2\pi i} \int_{\Re w = c} \Gamma(w)a^{-w} dw = e^{-a},
$$
we obtain the desired expansion. The absolute and uniform convergence is clear for Lemma \ref{MainEstimate}.
\end{proof}

In what follows we will estimate $\ggo_{+}$ on average, which will give the main term, the function $\ggo_{-}$ will on the contrary be estimated individually for each $f.$ The following lemma bounds $\ggo_-$ in terms of the parameter $X$.

\begin{lemma}
\label{estimateminus}
Let $\Re s \geq 1/2+\epsilon.$ Then for any $f\in B_k(N),$ $0<\epsilon'<\epsilon,$ $T>0,$ for $|\Im(s)|\leq T$ we have
$$
|\ggo_{-}(f, s, z, X)|\ll_{\epsilon'}(NkX)^{\epsilon'}X^{-\epsilon}.  
$$
\end{lemma}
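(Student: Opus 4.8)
The plan is to estimate the contour integral defining $\ggo_-$ directly, following \cite[\S 2.2]{IM3} and using the decomposition $\ggo = \ggo_+ - \ggo_-$ of the previous lemma together with the pointwise bound of Lemma \ref{estimateL} applied to the trivial twist, i.e.\ to $L(f,s)$ itself. On the contour $\Re w = \epsilon'-\epsilon$ one has $\Re(s+w) = \sigma+\epsilon'-\epsilon \geq \tfrac{1}{2}+\epsilon'$, so under GRH the function $\ggo(f,s+w,z)$ is holomorphic there, and Lemma \ref{estimateL} gives, uniformly for $z \in \calD_R$,
$$
|\ggo(f,s+w,z)| \leq \exp\left(C_1\,\ell(\Im(s+w))^{1-2\epsilon'}\,\ell(Nk)^{1-2\epsilon'}\right)
$$
with $C_1 = C_1(\epsilon',R)$. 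Since $X \geq 1$ we have $|X^w| = X^{\epsilon'-\epsilon}$, so writing $w = \epsilon'-\epsilon+iv$,
$$
|\ggo_-(f,s,z,X)| \leq \frac{X^{\epsilon'-\epsilon}}{2\pi}\int_{-\infty}^{\infty}|\Gamma(\epsilon'-\epsilon+iv)|\exp\left(C_1\,\ell(t+v)^{1-2\epsilon'}\,\ell(Nk)^{1-2\epsilon'}\right)dv.
$$

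The crucial step is to decouple the integration variable $v$ from the conductor $Nk$ inside the exponential, since a crude bound there loses too much. I would apply Young's inequality (with a scaling parameter) with the conjugate exponents $\tfrac{1}{1-2\epsilon'}$ and $\tfrac{1}{2\epsilon'}$ to the product $\ell(Nk)^{1-2\epsilon'}\cdot\ell(t+v)^{1-2\epsilon'}$, the scaling parameter being chosen precisely so that the coefficient of $\ell(Nk)$ comes out equal to $\epsilon'$:
$$
C_1\,\ell(t+v)^{1-2\epsilon'}\,\ell(Nk)^{1-2\epsilon'} \leq \epsilon'\,\ell(Nk) + C_2\,\ell(t+v)^{\gamma}, \qquad \gamma = \frac{1-2\epsilon'}{2\epsilon'},
$$
with $C_2 = C_2(\epsilon',R)$. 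Since $e^{\epsilon'\ell(Nk)} = (Nk+2)^{\epsilon'} \ll (Nk)^{\epsilon'}$ and $\ell(t+v) \leq \log(|v|+T+2)$ for $|t|\leq T$, this yields
$$
|\ggo_-(f,s,z,X)| \ll_{\epsilon',R,T} (Nk)^{\epsilon'}\,X^{\epsilon'-\epsilon}\int_{-\infty}^{\infty}|\Gamma(\epsilon'-\epsilon+iv)|\exp\left(C_2\bigl(\log(|v|+T+2)\bigr)^{\gamma}\right)dv.
$$

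It remains to note that the last integral converges to a finite constant depending only on $\epsilon',R,T$: on vertical lines $|\Gamma(\epsilon'-\epsilon+iv)|$ decays like $e^{-\pi|v|/2}$ by Stirling's formula, while the exponential factor grows only like $\exp\bigl(C_2(\log|v|)^{\gamma}\bigr)$, which is sub-exponential. Hence $|\ggo_-(f,s,z,X)| \ll_{\epsilon',R,T}(Nk)^{\epsilon'}X^{\epsilon'-\epsilon} = (NkX)^{\epsilon'}X^{-\epsilon}$, which is the claimed bound, the remaining dependence of the implied constant on $\epsilon,R,T$ being suppressed as in the conventions of this subsection. The only real obstacle is the decoupling step: one must extract exactly the factor $(Nk)^{\epsilon'}$ — and no more — from the conductor-dependent part of Lemma \ref{estimateL}, which forces the particular conjugate exponents above; the rest is a routine estimate of a $\Gamma$-integral.
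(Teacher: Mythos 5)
Your proof is correct, and the overall skeleton matches the paper's: shift to the contour $\Re w = \epsilon'-\epsilon$, note $|X^w|=X^{\epsilon'-\epsilon}$, feed Lemma~\ref{estimateL} (applied to the trivial twist, with $\epsilon'$ in the role of $\epsilon$) into the integrand, extract exactly $(Nk)^{\epsilon'}$, and check that what remains converges against the exponentially decaying $\Gamma$. Where you diverge is in the single key step of \emph{how} the factor $(Nk)^{\epsilon'}$ is peeled off. The paper does it \emph{after} integration: it crudely replaces $\ell(t+u)^{1-2\epsilon'}$ by $\log(|u|+1)$, so that the $v$-integral becomes a Gamma function $\Gamma\bigl(C\,\ell(Nk)^{1-2\epsilon'}+1\bigr)$, and then invokes Stirling plus the elementary fact that $C'\,\ell(Nk)^{-2\epsilon'}\log\ell(Nk)<\epsilon'$ once $Nk$ is large; the small-$Nk$ range is trivial. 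You do it \emph{before} integration, via a scaled Young's inequality with exponents $\tfrac{1}{1-2\epsilon'}$ and $\tfrac{1}{2\epsilon'}$, tuned so that the conductor-dependent summand is exactly $\epsilon'\,\ell(Nk)$; the $v$-integral is then $Nk$-free and converges because $\exp\bigl(C_2(\log|v|)^{\gamma}\bigr)$ is subexponential for any fixed $\gamma>0$. Both arguments are routine, but yours avoids the Stirling step and the ``for $Nk$ large enough'' caveat entirely, so the decoupling is arguably a bit cleaner; the price is a slightly larger exponent $\gamma$ in the subexponential factor when $\epsilon'$ is small, which costs nothing since the implied constant may depend on $\epsilon'$. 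The only cosmetic point: as in the paper, the statement silently uses GRH (needed both for the holomorphy of $\ggo(f,s+w,z)$ on $\Re(s+w)>\tfrac12$ and in Lemma~\ref{estimateL}); you correctly flag this, whereas the lemma's statement elides it.
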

\begin{proof}
Once again our proof largely mimics that of \cite[Proposition 2.2.13]{IM3}. We need to estimate the integral 
$$
\ggo_{-}(f,s,z,X)=\frac{1}{2\pi i}\int_{\Re w =\epsilon'-\epsilon}\Gamma(w)\ggo(f,s+w,z)X^w dw.
$$ 
Clearly, $|X^w|=X^{\epsilon'-\epsilon}$ and it is well-known \cite[(2.2.9)]{IM3} that 
$$
\Gamma(w)\ll |\Im w|^{c-1/2}\exp\left(-\frac{\pi}{2}|\Im(w)|\right),
$$ 
when $|\Im w| \geq 1,$ $\Re w \leq c,$ so in our case $\Gamma(w)\ll \exp(-|\Im(w)|).$
Lemma \ref{estimateL} ensures that, putting $u=\Im(w)$ and $t=\Im(s),$ we have
$$
\log|\ggo(f,s+w,z)|\ll \ell(kN)^{1-2\epsilon'}\ell(t+u)^{1-2\epsilon'}.
$$
Therefore, there exists $C=C(T,\epsilon')$ such that 
$$
|\ggo(f,s+w)|\leq\exp \left(C\ell(Nk)^{1-2\epsilon'}(\log(|u|+1))^{1-2\epsilon'}\right)\leq\exp \left(C\ell(Nk)^{1-2\epsilon'}\log(|u|+1)\right).
$$ 
So, by comparison with the $\Gamma$-integral, we have
\begin{align*}
|\ggo_{-}(f,s+w,z, X)|&\ll X^{\epsilon'-\epsilon}\int_{0}^{+\infty}e^{-u}(u+1)^{C\ell(Nk)^{1-2\epsilon'}} du\\
&\ll X^{\epsilon'-\epsilon}\Gamma(C\ell(Nk)^{1-2\epsilon'}+1)\\
& \ll X^{\epsilon'-\epsilon}\exp(C\ell(Nk)^{1-2\epsilon'}\log (C\ell(Nk)^{1-2\epsilon'}))\\
& \ll X^{\epsilon'-\epsilon}\exp(C'\ell(Nk)^{1-2\epsilon'}\log (\ell(Nk))) \\
& \ll_{\epsilon'} X^{\epsilon'-\epsilon}\exp(\epsilon'\ell(Nk))\ll X^{\epsilon'-\epsilon} (Nk)^{\epsilon'},
\end{align*}
since for $Nk$ large enough depending on $T$ and $\epsilon',$ $C'\ell(Nk)^{-2\epsilon'}\log (\ell(Nk))<\epsilon'$ holds.
\end{proof}

\subsection{Averaging}
\label{avproof}

We now go back to averaging over primitive forms. We denote for simplicity $\ggo=\ggo(s, f, z),$ $\ggo'=\ggo(s,f,z')$ and we adopt similar notation for $\ggo_{\pm}$ and $\ggo'_{\pm}.$ 

First of all, using the decomposition established in \S \ref{integralrep}, we note that
\[
\Avgh(\overline{\ggo}\ggo')=\Avgh(\overline{\ggo}_+\ggo'_+)-\Avgh(\overline{\ggo}_+\ggo'_-)-\Avgh(\overline{\ggo}_-\ggo'_+)+\Avgh(\overline{\ggo}_-\ggo'_-).
\]

Our first goal is to prove that the average
\begin{align*}
\Avgh(\overline{\ggo}_+\ggo'_+)=\sum_{f\in B_k(N)}\omega(f)\sum_{n,m\geq 1}n^{-\bar{s}}m^{-s}\overline{\lgo_{z}(n)}\lgo_{z'}(m)e^{-\frac{n+m}{X}}.
\end{align*}
gives the main term of the asymptotic behaviour. The calculations of \S \ref{naive} allow us to decompose the above average as follows:
\begin{equation}
\label{decplus}
\Avgh(\overline{\ggo}_+\ggo'_+)=\tilde{M}(s, X)+\Delta(s, X)+\Delta'(s, X),
\end{equation}
with
\begin{align*}
\tilde{M}(s, X)&=\frac{\varphi(N)}{N}\sum_{n,m}n^{-\bar{s}}m^{-s}e^{-\frac{n+m}{X}}\sum_{\substack{x\in J_N(n)\cap J_N(m)\\ (x, N)=1}}\overline{c^N_{z,x}(n)}c^N_{z',x}(m),\\
\Delta (s,X)&=\sum_{n,m}n^{-\bar{s}}m^{-s}e^{-\frac{n+m}{X}} \sum_{\substack{x \in J_N(n),y\in J_N(m)\\(xy, N)=1}}\overline{c^N_{z,x}(n)}c^N_{z',y}(m)\Delta(x,y),\\
\Delta'(s, X)&=\sum_{n,m}n^{-\bar{s}}m^{-s}e^{-\frac{n+m}{X}}\sum_{\substack{x\in J_N(n), y\in J_N(m)\\(xy,N)\neq 1}}\overline{c^N_{z, x}(n)}c^N_{z',y}(m)S(x, y).
\end{align*}

Noting that $0<1-e^{-a}< \min(a,1)$ and fixing any $\alpha >0,$ we see that
\begin{align*}
|\tilde{M}(s)-\tilde{M}(s, X)|&=\frac{\varphi(N)}{N}\left|\sum_{n,m}n^{-\bar{s}}m^{-s}(1-e^{-\frac{n+m}{X}})\sum_{\substack{x\in J_N(n)\cap J_N(m)\\ (x, N)=1}}\overline{c^N_{z,x}(n)}c^N_{z',x}(m)\right|\\
&\leq \frac{\varphi(N)}{N}\sum_{\substack{n\leq \alpha X \\ m \leq \alpha X}}n^{-\sigma}m^{-\sigma}\frac{n+m}{X}\sum_{\substack{x\in J_N(n)\cap J_N(m)\\ (x, N)=1}}|c^N_{z,x}(n)||c^N_{z',x}(m)|\\ 
&+  \frac{\varphi(N)}{N}\sum_{\substack{n\geq \alpha X\\ \text{ or } \\ m \geq \alpha X}}n^{-\sigma}m^{-\sigma}(1-e^{-\frac{n+m}{X}})\sum_{\substack{x\in J_N(n)\cap J_N(m)\\ (x, N)=1}}|c^N_{z,x}(n)||c^N_{z',x}(m)|.
\end{align*}
The calculations of \S \ref{naive} together with the observation that (in the notation of \eqref{rnotation}) $r^2r_+(b)^2=mn$ result in the following bound valid for any $\epsilon''>0$:
\begin{multline*}
\sum_{\substack{n\geq \alpha X\\ \text{ or } \\ m \geq \alpha X}}n^{-\sigma}m^{-\sigma}(1-e^{-\frac{n+m}{X}})\sum_{\substack{x\in J_N(n)\cap J_N(m)\\ (x, N)=1}}|c^N_{z,x}(n)||c^N_{z',x}(m)|\\
\ll_{\epsilon''}\sum_{(rs)^2\geq \alpha X} (rs)^{-2\sigma+\epsilon''}\ll_{\epsilon''}\sum_{r\geq \sqrt{\alpha X}} r^{-2\sigma+\epsilon''}\ll_{\epsilon''} (\alpha X)^{1/2-\sigma+\epsilon''/2},
\end{multline*}
while the absolute convergence of the series for $\tilde{M}(s)$ implies 
\[
\sum_{\substack{n\leq \alpha X \\ m \leq \alpha X}}n^{-\sigma}m^{-\sigma}\frac{n+m}{X}\sum_{\substack{x\in J_N(n)\cap J_N(m)\\ (x, N)=1}}|c^N_{z,x}(n)||c^N_{z',x}(m)|\ll \alpha.
\]
Taking $\epsilon''$ small enough so that $\beta=1/2-\sigma+\epsilon''/2<0$ and $\alpha$ satisfying $\alpha=(\alpha X)^\beta,$ we finally see that 
\begin{equation}
\label{plusest1}
|\tilde{M}(s)-\tilde{M}(s, X)|\ll_{\epsilon''} \frac{\varphi(N)}{N} X^{\frac{1/2-\sigma+\epsilon''/2}{1/2+\sigma-\epsilon''/2}}\leq  \frac{\varphi(N)}{N} X^{\epsilon''/2-\epsilon}.
\end{equation}

Now, let us turn to the second and the third terms in \eqref{decplus}. Once again, applying the estimates from \S \ref{naive} we see that for any $\epsilon''>0$
\begin{gather*}
|\Delta(s, X)|\ll_{\epsilon''} \frac{\tau(N)^2\log N}{Nk^{5/6}} \sum_{m,n\geq 1} (mn)^{-\sigma+\frac{1}{4}+\epsilon''}e^{-\frac{m+n}{X}}, \\
|\Delta'(s, X)|\ll_{\epsilon''} \frac{1}{\sqrt{p_{\min}(N)}}\left(\frac{\varphi(N)}{N}+O\left(\frac{\tau(N)^2\log (2N)}{N k^{5/6}}\right)\right) \sum_{m,n\geq 1} (mn)^{-\sigma+\epsilon''}e^{-\frac{m+n}{X}}.
\end{gather*}
Bounding the sums via the corresponding improper integrals (cf. \cite[proof of Proposition 2.2.13]{IM3}), we get
\begin{gather}
\label{plusest2}
|\Delta(s, X)|\ll_{\epsilon''} \frac{\tau(N)^2\log N}{Nk^{5/6}} X^{3/2+2\epsilon''-2\epsilon},\\
\label{plusest3}
|\Delta'(s, X)|\ll_{\epsilon''}  \frac{1}{\sqrt{p_{\min}(N)}}\left(\frac{\varphi(N)}{N}+O\left(\frac{\tau(N)^2\log (2N)}{N k^{5/6}}\right)\right)  X^{1+2\epsilon''-2\epsilon}.
\end{gather}

In what follows, we will choose $X$ (as a function of $N$) in such a way that the right-hand sides in \eqref{plusest1}, \eqref{plusest2}, and \eqref{plusest3} tend to $0.$ With this choice of $X,$ taking $z=z'$ and using the absolute convergence of $\tilde{M}(s),$ we obtain
\[
\Avgh |\ggo_+|^2 \ll_{\epsilon''} 1, \qquad \Avgh |\ggo'_+|^2 \ll_{\epsilon''} 1
\] 

Let us estimate the remaining terms involving $\ggo_{-}$ and $\ggo'_-.$ By Lemma \ref{estimateminus} and \eqref{sumomega}
\begin{multline*}
\Avgh |\ggo_-|^2\ll_{\epsilon' , T} (NkX)^{2\epsilon'} X^{-2\epsilon}\sum_{f\in B_k(N)} \omega(f)\\
\leq \left(\frac{\varphi(N)}{N}+O\left(\frac{\tau(N)^2\log (2N)}{N k^{5/6}}\right)\right)(NkX)^{2\epsilon'} X^{-2\epsilon}.
\end{multline*}
We apply the Cauchy--Schwartz to get
\begin{multline}
\label{minusest1}
|\Avgh (\overline{\ggo}_+\ggo'_-)| + |\Avgh(\overline{\ggo}_-\ggo'_+)|+ |\Avgh (\overline{\ggo}_-\ggo'_-)|\\
\ll_{\epsilon'} \left(\frac{\varphi(N)}{N}+O\left(\frac{\tau(N)^2\log (2N)}{N k^{5/6}}\right)\right)(NkX)^{2\epsilon'} X^{-\epsilon},
\end{multline}
since $(NkX)^{2\epsilon'}\geq (NkX)^{\epsilon'}$ and $X^{-2\epsilon}\leq X^{-\epsilon}.$

Let us now turn to the case considered in the theorem, by assuming that $k$ is fixed and $N=p$ is prime. Assuming that $\epsilon'' < 2\epsilon,$ we have
\begin{gather*}
|\tilde{M}(s)-\tilde{M}(s, X)|\ll_{\epsilon''} X^{\epsilon''/2-\epsilon},\\
|\Delta(s, X)|\ll_{\epsilon''} \frac{\log p}{p} X^{3/2+2\epsilon''-2\epsilon},\\
|\Delta'(s, X)|\ll_{\epsilon''}  \frac{1}{\sqrt{p}} X^{1+2\epsilon''-2\epsilon},\\
|\Avgh (\overline{\ggo}_+\ggo'_-)| + |\Avgh(\overline{\ggo}_-\ggo'_+)|+ |\Avgh (\overline{\ggo}_-\ggo'_-)|\ll_{\epsilon',k} p^{2\epsilon'} X^{2\epsilon'-\epsilon}.
\end{gather*}

Taking $X=p^{1/2},$ we see that the above bounds lead to
\[
|\Avgh \overline{\ggo(f, s, z)}\ggo(f, s, z')- \tilde{M}(s)| \ll_\delta p^{-\epsilon/2+\delta},
\]
where $\delta$, which depends on $\epsilon'$ and $\epsilon'',$ can be taken arbitrarily small.

The second part of the theorem follows from the first.

\section{Open questions and remarks}
\label{section_question}

This section is devoted to a series of questions and remarks to complement the results of the paper. We hope to address at least some of them in subsequent articles. We start by the topics discussed in \S\ref{section_twists}.

\begin{question}
\label{quest_generality}
Can Theorem \ref{averagechar} be proven in a greater generality?
\end{question}

For example, one can consider $L$-functions of more general automorphic cusp forms and the average taken with respect to their twists by Hecke characters of imaginary quadratic number fields or algebraic function fields with a fixed place at infinity. As indicated in \cite{IM3}, going beyond imaginary quadratic number fields seems to be tricky since it involves essentially new problems related to the presence of non-trivial units. One can also consider averages over quadratic characters in the spirit of \cite{MM}

\begin{question}
What is an unconditional version of Theorem \ref{maindistrib}?
\end{question}

The unconditional results \cite[Theorem 1]{IM2} and \cite[Theorem 1.1]{IM4} suggest that it should be possible to prove similar statements in our case. 

\begin{question}
Prove an analogue of Theorem \ref{maindistrib} for modular forms in the other situations within the framework of the cases (A), (B), (C) discussed in the introduction. 
\end{question}

Some results in this direction were established by Mastsumoto in \cite{M1} in the case (C), that is the equidistribution of $L(f, \sigma+it),$ when $\sigma$ is fixed and $t\in \bbR$ varies. It seems, however, that, even when considering averages of Dirichlet $L$-functions conditionally on GRH, this question has not been fully investigated, the most advanced results having been obtain only in the case (A).

\begin{question}
Carry out a more in-depth study of the functions $M$ and $\tilde{M}.$
\end{question}

In the case of Dirichlet characters this was done in \cite{Ih2}, \cite{Ih3}. One should be able to write down an explicit power series expansion of $\tilde{M}_s(z_1, z_2)$ in the variables $z_1, z_2$, establish its analytic continuation, study its growth, its zeroes, etc.

We next switch to the case of averages with respect to primitive forms of \S\ref{section_primitive}, where the results are far less complete.

\begin{question} Can one obtain Theorem \ref{average_primitive} with weaker assumptions on $N$? Can we let $k$ tend to infinity, while $N$ is fixed? Can we let $k+N\to \infty$? 
\end{question}

By following carefully the proof of Theorem \ref{average_primitive}, one can see that the limit statement is still true when $N=1$ and $k\to \infty$. Indeed, in this case $\Delta'$ is not present and the parameter $X$ cas be chosen to be equal to $k^{1/2}.$ This suggests that some  greater generality should be possible. The idea would be to use better bounds on averages of the Fourier coefficients of cusp forms with indices not coprime with $N,$ which should be possible by a careful treatement of an explicit basis of the space of old forms in the spirit of \cite{ILS}

\begin{question}
Prove an unconditional version of Theorem \ref{average_primitive}. 
\end{question}

Surprisingly enough, 
a crude reasoning with Euler products does not seem to work even for $\Re s> 1.$ An unconditional version for $\Re s> 1/2$ will certainly be tricky to obtain even if one only considers characters $\psi_z$ as in\cite{IM2} and \cite{IM4}.

\begin{question}
Is it possible to establish value distribution results in the case harmonic averages over the set of primitive forms?
\end{question}

The reason we could not carry out the study analogous to that of \S \ref{section_distribution} is the absence of a local theory (at least in a straight-forward way). Indeed, the $\tilde{M}_s$ do not seem to admit an Euler product in this case. One could hope to rely on the interpretation of $\omega(f)$ via the symmetric square $L$-functions \eqref{weightL}, though there does not seem to be an easy way to do that.

\begin{question}
\label{quest_noweights}
Can one remove the harmonic weights in Theorem \ref{average_primitive}?
\end{question}

At least two approaches are available. The papers \cite{ILS}, \cite{KM1}, \cite{KM2} address a similar issue in different situations by using the interpretation \eqref{weightL} of the weights via $L(\Sym^2 f, 1).$ 

A more conceptual way would be to construct the local theory first. The results of Serre \cite{Se1} on the equidistribution of the eigenvalues of Hecke operators $T_p$ suggest that the local picture should be fairly clear. This would allow to establish the value distribution results missing in the case of harmonic averages. We plan to address this question in a forthcoming paper. 

\begin{question} Can one prove Theorem \ref{average_primitive} in greater generality for other types of automorphic forms? 
\end{question}

The first obvious step would be establishing it for $L(f\otimes\chi, s).$ For more general $L$-functions an appropriate trace formula would be necessary to replace Petersson.

\begin{question}
What is a function field version of Theorem \ref{average_primitive}?
\end{question}

The GRH being known in this case, unconditional results should not be very difficult to establish along the lines of this paper, once proper definitions are given.

\begin{question}
Establish the properties of $\tilde{M}$ functions in the case of averages with respect to primitive forms. 
\end{question}

Some peculiarities do arise compared to the case of characters. For example, $\tilde{M}_s(z_1, z_2)$ is no longer holomorphic in $s,$ since the average does depend on $s$ and $\bar{s}.$ The function is still entire in $z_1, z_2$ for fixed $s.$ Establishing its explicit power series expansion, analytic continuation, etc. seems to be of interest. The growth properties of $\tilde{M}$ seem to be much more delicate in our case, since they are proven using local results in the situation of Ihara and Matsumoto.

\begin{question}
Write an adelic version of Ihara's and Matsumoto's results, as well as of our results in the setting of modular forms.
\end{question}

This might shed some light on and give a better understanding of the functions $M,$ $\tilde{M},$ as well as of the relation of the global theory to the local one. One might also hope to be able to deal with the problems related to units in the number field case (c.f. Question \ref{quest_generality}).

\begin{question}
What are the arithmetic implications of our results?
\end{question}

The results of Ihara and Matsumoto give us a better understanding of the behaviour of the Euler--Kronecker constants of cyclotomic fields. More generally, since the $\log$ case of averaging results for $\Q$ concerns, in particular, zeta-functions of cyclotomic fields $\zeta_{\Q(\zeta_m)}(s),$ which are simply the products of $L(s, \chi)$ over primitive Dirichlet characters of conductors dividing $m,$ the results of Ihara and Matsumoto can be seen as a first step in the development of a finer version of the asymptotic theory of global fields from \cite{TV}, that gives non-trivial results for abelian extensions. This is not the case in \cite{TV}, since infinite global fields, containing infinite abelian subfields are asymptotically bad in the terminology of loc. cit. 

When one takes averages with respect to primitive forms, the results are close in spirit to the  asymptotic study of zeta-functions of modular curves $X_0(N),$ that can be written as 
$$
\zeta_{X_0(N)}(s)= \prod\limits_{f\in B_2(N)} L(f, s).
$$ 
Establishing a precise relation boils down to answering Question \ref{quest_noweights}. 

Note that even a cruder version of the asymptotic theory in the spirit of \cite{TV} has not been developed in this case. In the function field case this was to a significant extent done in \cite{Z}. A higher dimensional asymptotic theory in the characteristic zero case is yet to be constructed.

\end{document}